\newtheorem{prob}{Problem}[section] 
\newtheorem{lem}[prob]{Lemma} 
\newtheorem{thm}[prob]{Theorem} 
\newtheorem{defin}[prob]{Definition}
\newtheorem{memo}[prob]{Memo}
\newtheorem{prop}[prob]{Proposition}
\newtheorem{rem}[prob]{Remark}
\newtheorem{coro}[prob]{Corollary}
\newtheorem{examp}[prob]{Example}
\crefname{prob}{problem}{problems}
\crefname{lem}{lemma}{lemmas} 
\crefname{thm}{theorem}{theorems} 
\crefname{defin}{definition}{definitions}
\crefname{memo}{memo}{memos}
\crefname{prop}{proposition}{propositions}
\crefname{rem}{remark}{remarks}
\crefname{coro}{corollary}{corollaries}
\crefname{examp}{example}{examples}
\newcommand{\R}{\mathbb{R}}
\newcommand{\ws}{\stackrel{*}{\rightharpoonup}}
\newcommand{\mres}{\mathbin{\vrule height 1.3ex depth 0pt width 0.13ex\vrule height 0.13ex depth 0pt width 1ex}} 
\DeclareMathOperator*{\esssup}{ess\,sup}
\newcommand{\crefthmpart}[2]{\namecref{#1}~\hyperref[#2]{\labelcref*{#1}~\labelcref*{#2}}}
\newcommand{\cyan}[1]{\textcolor{black}{#1}}
\newcommand{\purple}[1]{\textcolor{black}{#1}}
\newcommand{\red}[1]{\textcolor{black}{#1}}
\newcommand{\J}{\cyan{\mathrm{D}}}
\newcommand{\e}{\cyan{\mathrm{d}}}
\newsavebox{\@brx}
\newcommand{\llangle}[1][]{\savebox{\@brx}{\(\m@th{#1\langle}\)}%
  \mathopen{\copy\@brx\kern-0.5\wd\@brx\usebox{\@brx}}}
\newcommand{\rrangle}[1][]{\savebox{\@brx}{\(\m@th{#1\rangle}\)}%
  \mathclose{\copy\@brx\kern-0.5\wd\@brx\usebox{\@brx}}}
\title{Formulas for the $h$-mass on $1$-currents with coefficients in $\R^m$}
\author{Julius Lohmann\thanks{Department of Mathematics and Statistics, University of Helsinki, julius.lohmann@helsinki.fi}\qquad Bernhard Schmitzer\thanks{Institute for Computer Science, University of Göttingen, schmitzer@cs.uni-goettingen.de}\qquad Benedikt Wirth\thanks{Institute for Numerical and Applied Mathematics, University of Münster, benedikt.wirth@uni-muenster.de}}
\begin{document}
\setstcolor{cyan}
\maketitle
\noindent
\begin{abstract}
\cyan{We consider the minimization of the $h$-mass over normal $1$-currents in $\R^n$ with coefficients in $\R^m$ and prescribed boundary.}
\cyan{This optimization is known as multi-material transport problem and} used in the context of logistics of multiple commodities, but also as a relaxation of nonconvex optimal transport \cyan{tasks} such as so-called branched transport problems.
The $h$-mass \cyan{with norm $h$} can be defined in different ways, resulting in three functionals $\mathcal{M}_h,|\cdot|_H$, and $\mathbb{M}_h$, whose equality is the main result of this article:
$\mathcal{M}_h$ is a functional on $1$-currents in the spirit of Federer and Fleming,
norm $|\cdot|_H$ denotes the total variation of a Radon measure with respect to $H$ induced by $h$,
and $\mathbb{M}_h$ is a mass on flat $1$-chains in the sense of Whitney.
\red{On top we introduce }a new and improved notion of calibrations for the multi-material transport problem:
\cyan{w}e identify calibrations with \cyan{(weak) Jacobians of} optimizers of the associated convex dual problem,
which \cyan{yields their existence and natural regularity.} 
\end{abstract}
\textbf{Keywords}: calibrations, convex optimization, currents, geometric measure theory, $h$-mass, multi-material transport, optimal transport, Plateau problem, Wasserstein distance
\tableofcontents

\section{Introduction}
\label{intro}
The \textit{multi-material transport problem} was introduced by \textcite{MMT17}. In their work an Eulerian formulation (due to \textcite{G67} and \textcite{X}) was considered: Given $m$ (the number of different material types) sources $\mu_+=(\mu_+^1,\ldots,\mu_+^m)$ (discrete non-negative Radon measures) and sinks $\mu_-=(\mu_-^1,\ldots,\mu_-^m)$ on the spatial domain $\R^n$, find optimal mass fluxes $F_i$ (normal $1$-currents with boundaries $\partial F_i=\mu_-^i-\mu_+^i$) with respect to a function $h:\mathbb{Z}^m\to[0,\infty)$ describing the cost $h(\theta)$ to transport a material bundle $\theta$ per unit distance. Since the multiplicity $\theta$ lies in $\mathbb{Z}^m$, the only admissible structures for the transport are countably $1$-rectifiable currents \cite[Cor.\:on p.\:265]{W992}.
The multi-material transport model was generalized by \textcite{MMSR16} to arbitrary (possibly non-discrete) marginals $\mu_+$ and $\mu_-$ with cost function $h:\R^m\to[0,\infty)$.
Our main results will concern different equivalent reformulations of this multi-material transport problem.

In \cite{MMT17} a \textit{calibration} for an admissible candidate $m$-tuple $F=(F_1,\ldots,F_m)$ to the multi-material transport problem was defined as a certain $\R^m$-valued closed smooth differential $1$-form $\omega$ satisfying three conditions. Calibrations are a classical tool in geometric measure theory in the study of Plateau-type problems. If such a calibration exists, then $F$ is optimal \cite[Thm.\:1.13]{MMT17}. The main step in the classical argument is a variant of Stokes' theorem (one may view $F$ as a directed $\R^m$-weighted surface with prescribed boundary) which uses the closedness of $\omega$. In this article, we will, inter alia, see that the natural definition of a calibration is actually provided by the \textit{primal-dual extremality conditions} associated with the multi-material transport problem and its dual. This relation directly yields \cyan{the} existence of a calibration and its natural regularity as well as necessary \textit{and} sufficient optimality conditions for multi-material transport \cyan{(}as opposed to just sufficient conditions in case of the classical definition of a calibration). 

The multi-material transport problem is an optimization over normal $1$-currents with coefficients in $\R^m$ and prescribed boundary.
These objects can alternatively also be interpreted as $m$-tuples of divergence measure vectorfields \cite{Sil} ($\R^{m\times n}$-valued Radon measures whose row-wise distributional divergence is an $\R^m$-valued \cyan{Radon} measure) or as flat $1$-chains with coefficients in $\R^m$ having finite mass and boundary mass.
Each of these three interpretations of the decision variables, as currents, measures, and flat chains, comes with its own natural notion of an $h$-mass (which is the objective in the multi-material transport problem), resulting in three mass functionals $\mathcal{M}_h,|\cdot|_H$, and $\mathbb{M}_h$, where $h$ and $H$ are norms on $\R^m$ and $\R^{m\times n}$ respectively ($H$ is suitably defined via $h$). We will show that these functionals coincide. 

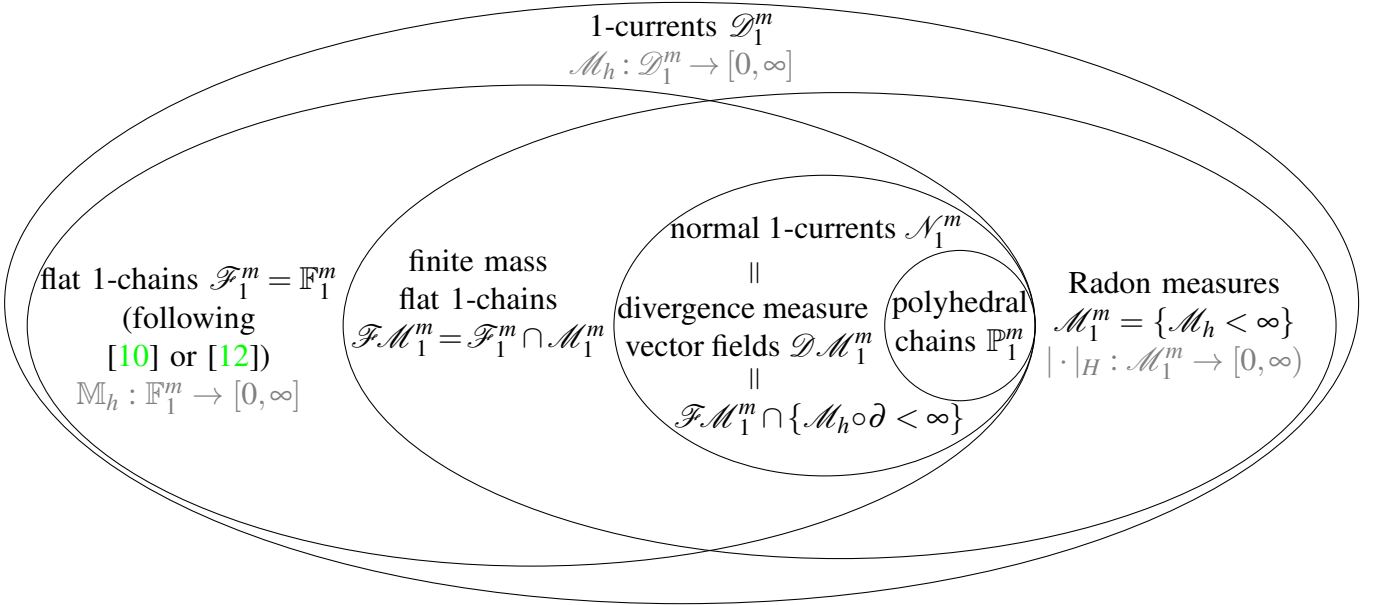
\begin{figure}
\centering
\begin{tikzpicture}
\draw (0,0) ellipse (9cm and 4cm);
\node[anchor=north,text width=3cm,align=center] at (0,4) {$1$-currents $\mathcal{D}_1^m$\\\color{gray}$\mathcal{M}_h:\mathcal{D}_1^m\to[0,\infty]$};

\draw (-2.,-.3) ellipse (6.7cm and 3.2cm);
\node[anchor=west,text width=4cm,align=center] at (-8.7,-.5) {flat $1$-chains $\mathcal{F}_1^m=\mathbb{F}_1^m$\\(following \cite{F} or \cite{F66})\\\color{gray}$\mathbb{M}_h:\mathbb{F}_1^m\to[0,\infty]$};

\draw (2.1,-.3) ellipse (6.6cm and 3.1cm);
\node[anchor=east,text width=4cm,align=center] at (8.7,-.3) {Radon measures $\mathcal{M}_1^m=\{ \mathcal{M}_h<\infty \}$\\\color{gray}$|\cdot|_H:\mathcal{M}_1^m\to[0,\infty)$};

\draw (1.9,-.3) ellipse (2.8cm and 2cm);
\node[anchor=west] at (-.3,.95) {normal $1$-currents $\mathcal{N}_1^m$};
\node at (1.,.4) {\rotatebox{90}{$=$}};
\node[anchor=west] at (-1.,-.1) {divergence measure};
\node[anchor=west] at (-.9,-.6) {vector fields $\mathcal{DM}_1^m$};
\node at (1.,-1.) {\rotatebox{90}{$=$}};
\node[anchor=west] at (-.25,-1.55) {$\mathcal{F\!\!M}_1^m\cap\{ \mathcal{M}_h\!\circ\!\partial<\infty \}$};

\draw (3.7,-.3) circle (1cm);
\node[text width=2cm,align=center] at (3.7,-.3) {polyhedral chains $\mathbb{P}_1^m$};

\node[anchor=west,text width=3.5cm,align=center] at (-4.6,.0) {finite mass flat $1$-chains $\mathcal{F\!\!M}_1^m\!=\!\mathcal{F}_1^m\cap\mathcal{M}_1^m$};
\end{tikzpicture}
\caption{Important subspaces of $\mathcal{D}_1^m =\mathcal{D}_1^m(\R^n)$, the space of $1$-currents in $\R^n$ with coefficients in $\R^m$.
The coefficient space $\R^m$ is indicated by the superscript $m$, while the spatial dimension $n$ is suppressed in the notation
(in particular, $\mathcal{M}_1^m$ represents $\R^{m\times n}$-valued Radon measures).
On their respective domain of definition, the three $h$-masses $\mathcal{M}_h,|\cdot|_H$, and $\mathbb{M}_h$ are displayed in grey.
The multi-material transport problem is posed on $\mathcal{N}_1^m$.}
\label{fig:overview}
\end{figure}

Let us provide a little more detail on the three functionals.
\Cref{fig:overview} can be referred to for an overview.
The definition of the \textit{mass} on $k$-currents goes back to \textcite{FF60} (cf.\:introduction in \cite{Wh99}). In \cite{FF60} the authors considered (compactly supported) $k$-currents (in the sense of DeRham) as (continuous and linear) functionals on $k$-forms and studied the coefficient groups $G=\mathbb{Z}$ and $G=\R$ (yielding the notion of rectifiable $k$-currents and real $k$-currents). The mass in \cite{FF60} is defined in the natural way according to this duality. This idea is used in \cite[Section 4.3]{MMSR16} to define a mass $\mathcal{M}$ for $m$-tuples of $k$-currents. For a general norm $h$ on $\R^m$, we will generalize this definition to the $h$-mass $\mathcal{M}_h$ on the space $\mathcal{D}_k^m$ of $k$-currents in $\R^n$ with coefficients in $\R^m$; for $k=1$ and $h_*$ the dual norm to $h$ we have
\begin{equation*}
\mathcal{M}_h(F)=\sup\left\{F(\omega)\:\middle|\:\omega\textup{ differential $1$-form with coefficients in $\R^m$ and }\sup_{x\in\R^n}\sup_{\cyan{\vec{e}}\in\R^n,|\cyan{\vec{e}}|\leq1}h_*(\omega(x)\cyan{\vec{e}})\leq 1\right\}.
\end{equation*}
We will then transform $h$ into a suitable matrix norm $H$ in such a way that $H(\theta\otimes\vec{e})=h(\theta)$ does not depend on unit direction $\vec{e}$.
This allows us to consider the total variation $|\cdot|_H$ on the space $\mathcal{M}_1^m$ of $\R^{m\times n}$-valued Radon measures (allowing for more general norms $H$ this functional may be used to study variants of multi-material transport in which the cost depends also on the direction of transport),
\begin{equation*}
|F|_H
=\sup\left\{\sum_{i}H(F(B_i))\:\middle|\:B_1,B_2,\ldots\textup{ is a countable measurable partition of }\R^n\right\}
=\int H\left( \frac{\e F}{\e |F|} \right)\,\mathrm{d}|F|.
\end{equation*}
Note that the latter equality follows from a classical argument \cite[Ch.\:2, Thm.\:4 (iv)]{DU77}.
In \cite{F66} Fleming used Whitney's concept of $k$-chains \cite[Ch.\:5]{W57} to define flat $k$-chains with coefficients in an arbitrary complete metric space $(G,d)$ as completion of the space of polyhedral $k$-chains with respect to Whitney's flat distance. The related mass functional is defined via relaxation of the mass on polyhedral $k$-chains (in which $d(\cdot,0)$ is applied to the coefficients). The multi-material case corresponds to the choice $G=\R^m$ and $d$ induced by $h$, so the corresponding mass functional on the space $\mathbb{F}_1^m$ of flat $1$-chains with coefficients in $\R^m$ is given by
\begin{equation*}
\mathbb{M}_h(F)
=\inf\left\{\liminf_{i}\:\mathbb{M}_h(P_i)\:\middle|\:\textup{sequence $P_i$ of polyhedral $1$-chains converges to $F$ in \cyan{the} flat sense}\right\},
\end{equation*}
where $\mathbb{M}_h(P)=\sum_{j=1}^{N}h(\theta_j)\mathcal{H}^1(s_j)$
for a polyhedral $1$-chain $P=\sum_{j=1}^{N}\theta_j\otimes\vec{s_j}\mathcal{H}^1\mres s_j$ with straight line segments $s_j\subset\R^n$ and corresponding unit tangent vectors $\vec{s_j}$
($\mathcal{H}^1\mres s$ denotes the one-dimensional Hausdorff measure restricted to $s$).

We will show $\mathcal{M}_h=|\cdot|_H$ on $\mathcal{M}_1^m$ and $|\cdot|_H=\mathbb{M}_h$ on the space $\mathcal{F\!\!M}_1^m$ of flat $1$-chains with coefficients in $\R^m$ and finite mass
(which implies $\mathcal{M}_h=\mathbb{M}_h$ on all of \cyan{$\mathbb{F}_1^m$}).
For the case $m=1$ and $h=\textup{abs}$ being the absolute value these relations are well-known \cite[Thm.\:4.1.23 \&\:Lem.\:4.2.23]{F},
and the corresponding single-material transport problem $\min_{\partial F=\mu_--\mu_+}\:\mathbb{M}_\textup{abs}(F)$ is exactly the so-called Beckmann formulation of the Wasserstein-$1$ distance between $\mu_+$ and $\mu_-$. Applying Fenchel\textendash Rockafellar duality to this problem yields the so-called Kantorovich\textendash Rubinstein formula,
\begin{equation*}
\min_{\partial F=\mu_--\mu_+}\:\mathbb{M}_\textup{abs}(F)=\max_\phi\int\phi\,\mathrm{d}(\mu_+-\mu_-),
\end{equation*}
where the maximum is over Lipschitz continuous Kantorovich potentials $\phi$ with $|\nabla\phi|\leq 1$ almost everywhere. We will generalize this formula for the case $m>1$ and general $h$. The corresponding primal-dual extremality conditions will then yield our definition of calibrations.


To conclude, we remark that the duality viewpoint is inherent in the definitions of classical mass $\mathcal{M}_\textup{abs}$ and dual flat seminorms in \cite[pp.\:358 \&\:367]{F} (duality of $k$-currents and compactly supported smooth differential $k$-forms) or in the dual formulations of the Wasserstein-$1$ distance (Kantorovich\textendash Rubinstein formula and Beckmann formulation), and it was roughly related to calibrations in the text passage on \cite[p.\:3]{MM14}. We will see that our new definition \cyan{of calibrations} via primal-dual extremality conditions is the most natural one: \cyan{i}t provides sufficient \textit{and} necessary conditions, it includes the classical definition,
and it reveals the generic regularity of calibrations.

In the remainder of this section, we introduce the terminology used in multi-material transport and summarize our main results
(the involved notions will be rigorously introduced only later, but the results should be understandable nevertheless).
In \cref{preliminaries} we list the used notation and definitions (in particular, the different notions of flat $k$-chains with coefficients in $\R^m$) as well as some auxiliary statements. \Cref{dualitysec} will be devoted to duality in multi-material transport including the generalized Kantorovich\textendash Rubinstein formula. An import subresult will be that $\mathbb{M}_h=|\cdot|_H$ on the set of minimizers of the multi-material transport problem. It will be used in \cref{equalityofmasses} where the equality of the $h$-masses will be shown.

\subsection{Multi-material transport}
\label{MMT}
The multi-material transport problem was introduced (with coefficients in $\mathbb{Z}^m$) in \cite{MMT17} and generalized (to coefficients in $\mathbb{R}^m$) in \cite{MMSR16}.
It is the \cyan{transport} task \cyan{of finding} the optimal simultaneous flux of $m$ materials from a given source to a given sink distribution.
These distributions must of course admit a connecting flux.

\begin{defin}[Source and sink distribution]
\label{boundaryprop}
An admissible \textbf{source and sink distribution} is a compactly supported $\R^m$-valued Radon measure $F_0$ on $\R^n$
such that $F_0(\R^n)=0$ or equivalently there exists some normal $1$-current $F_1$ in $\R^n$ with coefficients in $\R^m$ and $\partial F_1=F_0$.
\end{defin}
\cyan{A source and sink distribution $F_0$ can be written $F_0=\mu_--\mu_+$ (Hahn decomposition theorem), where $\mu_+$ and $\mu_-$ can be interpreted as the source and sink configurations of the material bundles $\theta\cyan{\in\R^m}$ (with non-negative components and satisfying $\mu_+(\R^n)=\mu_-(\R^n)$).}

Optimality is with respect to an overall transporation cost,
which in turn depends on the cost $h(\theta)$ to transport a material bundle $\theta\in\R^m$ per unit distance.
In this article, $h$ will have the following regularity.
\begin{defin}[Multi-material transport cost]
	\label{mtc}
	A \textbf{multi-material transport cost} is a norm $h:\R^m\to[0,\infty)$.
\end{defin}
Norms and their properties appear quite naturally as cost in this context.
The subadditivity leads to the appearance of branched structures in the support of optimal multi-material mass fluxes: 
\cyan{t}ypically, the cost $h(\theta+\bar\theta)$ for transporting material bundles $\theta$ and $\bar\theta$ together
is strictly smaller than the cost $h(\theta)+h(\bar\theta)$ for transporting them separately
so that the overall cost can be reduced by merging multiple initially separate mass fluxes.
The absolute homogeneity induces a scaling property: if $F_{\textup{opt}}$ is a minimizer of the $h$-mass subject to $\partial F=F_0$, then $\lambda F_{\textup{opt}}$ is a minimizer under $\partial F=\lambda F_0$ for all $\lambda\in\R$. Finally, the definiteness is a natural property of a transport cost.

A multi-material transport cost $h$ induces a second norm $H$ that becomes relevant when viewing multi-material mass fluxes $F$ as $\R^{m\times n}$-valued Radon measures (each row describing the mass flux of one of the material types). This norm is applicable to densities of $F$ (e.g.\:the Radon\textendash Nikodym derivative with respect to the total variation measure $|F|$).
\begin{defin}[Generated norm]
	\label{generated}
	The norm $H:\R^{m\times n}\to[0,\infty)$ \textbf{generated by} a multi-material transport cost $h:\R^m\to[0,\infty)$ is defined by specifying its subdifferential in zero as
	\begin{equation*}
	\partial H(0)=\left\{  M\in\R^{m\times n}\:\middle|\: MB_1(0)\subset\partial h(0)\right\}.
	\end{equation*}
\end{defin}
Above, $B_1(0)\subset\R^n$ denotes the open unit ball (due to closedness of $\partial h(0)$ one may just as well use the closed ball).
This definition is motivated by the following simple fact, proven in \cref{basics}.

\begin{prop}[Generated norm as envelope]\label{propertygenerated}
The norm $H$ generated by $h$ is the largest convex function on $\R^{m\times n}$ satisfying $H(\theta\otimes\vec e)=h(\theta)$ for all $\theta\in\R^m$ and unit vectors $\vec e\in\mathcal{S}^{n-1}\subset\R^n$.
\end{prop}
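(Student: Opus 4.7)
The approach is to exploit the support function representation of any norm by its subdifferential at zero:
\begin{equation*}
H(A)=\sup_{M\in\partial H(0)}\langle M,A\rangle,\qquad h(\theta)=\sup_{v\in\partial h(0)}\langle\theta,v\rangle,
\end{equation*}
together with the key identity $\langle M,\theta\otimes\vec e\rangle=\langle\theta,M\vec e\rangle$. Since $\partial h(0)$ is closed, convex, and symmetric (being the closed dual unit ball of $h_*$), the defining condition $MB_1(0)\subset\partial h(0)$ is equivalent, by continuity, to $M\overline{B_1(0)}\subset\partial h(0)$.

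First I verify that $H(\theta\otimes\vec e)=h(\theta)$ for $|\vec e|=1$. For the upper bound, every $M\in\partial H(0)$ satisfies $M\vec e\in\partial h(0)$, so $\langle\theta,M\vec e\rangle\leq h(\theta)$. For the lower bound, I use compactness of $\partial h(0)$ to select $v\in\partial h(0)$ realizing $\langle\theta,v\rangle=h(\theta)$ and set $M:=v\vec e^\top$; then $M\vec e'=(\vec e\cdot\vec e')v\in[-v,v]\subset\partial h(0)$ for every $\vec e'\in\overline{B_1(0)}$ (by symmetry and convexity of $\partial h(0)$), so $M\in\partial H(0)$, and this $M$ attains $h(\theta)$ in the supremum.

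Second, to prove maximality, let $G:\R^{m\times n}\to(-\infty,\infty]$ be convex with $G(\theta\otimes\vec e)=h(\theta)$ for every $\theta\in\R^m$ and $|\vec e|=1$, so in particular $G(0)=h(0)=0$. Writing an arbitrary $A\in\R^{m\times n}$ as a finite sum of rank-one matrices of the form $\theta_i\otimes\vec e_i$ with $|\vec e_i|=1$ (e.g.\ from the singular value decomposition) and rewriting $A=\frac{1}{N}\sum_{i=1}^{N}(N\theta_i)\otimes\vec e_i$ as a convex combination, Jensen's inequality combined with absolute homogeneity of $h$ gives $G(A)\leq\sum_i h(\theta_i)<\infty$. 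Hence $G$ is finite on $\R^{m\times n}$, therefore continuous, and $G=G^{**}$. Restricting the supremum defining $G^*$ to rank-one matrices,
\begin{equation*}
G^*(M)\geq\sup_{\theta\in\R^m}\sup_{|\vec e|=1}\bigl[\langle\theta,M\vec e\rangle-h(\theta)\bigr]=\sup_{|\vec e|=1}h^*(M\vec e),
\end{equation*}
where $h^*$ is the $\{0,+\infty\}$-valued indicator of $\partial h(0)$. Consequently $G^*(M)=+\infty$ unless $M\overline{B_1(0)}\subset\partial h(0)$, i.e.\ unless $M\in\partial H(0)$, while evaluating $G^*$ at $A=0$ gives $G^*(M)\geq -G(0)=0$ everywhere. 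Taking the biconjugate,
\begin{equation*}
G(A)=G^{**}(A)\leq\sup_{M\in\partial H(0)}\langle M,A\rangle=H(A).
\end{equation*}

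The only delicate point is ensuring the hypothesis forces $G<\infty$ (so that the Fenchel\textendash Moreau identity $G=G^{**}$ is available); this is resolved by the Jensen-plus-rescaling step, which crucially uses absolute homogeneity of $h$. Everything else is a direct manipulation of support functions and the tensor-inner-product identity.
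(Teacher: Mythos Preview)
Your proof is correct and follows essentially the same approach as the paper's: both arguments rest on Fenchel biconjugation and the identity $\langle M,\theta\otimes\vec e\rangle=\langle\theta,M\vec e\rangle$, and both obtain the lower bound $H(\theta\otimes\vec e)\geq h(\theta)$ by noting that $v\otimes\vec e\in\partial H(0)$ whenever $v\in\partial h(0)$. The only organizational difference is that the paper packages the maximality step by introducing the auxiliary function $\widetilde H$ (equal to $h(\theta)$ on rank-one matrices and $+\infty$ elsewhere) and computing $(\widetilde H^*)^*=H$ directly, whereas you take an arbitrary convex competitor $G$ and bound $G^*$ from below; your extra Jensen-plus-rescaling step to force $G<\infty$ (hence $G=G^{**}$) is a detail the paper leaves implicit when it passes from ``largest lower semicontinuous convex'' to ``largest convex''.
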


This essentially means that $H$ is independent of the transport direction.
For example, if $F$ is rectifiable, that is, $F=\theta\otimes\vec{e}\mathcal{H}^1\mres S$, where multiplicity $\theta\in\R^m$ is integrable with respect to the one-dimensional Hausdorff measure $\mathcal{H}^1$ restricted to a countably $1$-rectifiable set $S$ and $\vec{e}\in\R^n$ is a tangential unit vectorfield defined $\mathcal{H}^1$-a.e.\:on $S$, then \cyan{by \cref{propertygenerated}} the \cyan{total variation} of $F$ \cyan{with respect to $H$ is equal to} $|F|_H=\int_SH(\theta\otimes\vec{e})\,\mathrm{d}\mathcal{H}^1=\int_Sh(\theta)\,\mathrm{d}\mathcal{H}^1$. Note that in principle one could also consider more general norms $H$ which are not generated by any $h$ and therefore do not necessarily need to be independent of the transport direction.

With these norms we can give \cyan{three} formulations of the multi-material transport problem (which will turn out to be equivalent by \cyan{\cref{equalitymasses}}).
Recall that, if $F$ is identified with a $\R^{m\times n}$-valued Radon measure, then $\partial F$ equals the negative distributional divergence (applied row-wise).

\begin{defin}[Three models for multi-material transport]
	Let $F_0$ be an admissible source and sink distribution. Fix a multi-material transport cost $h:\R^m\to[0,\infty)$ and assume that $H$ is generated by $h$. The \textbf{multi-material transport problem} is any of the optimization problems
	\begin{gather}
	\label{DM}\tag{$\mathcal{DM}$}
	\inf_F\:|F|_H,\\
	\label{M}\tag{$\mathcal{M}$}
	\inf_F\:\mathcal{M}_h(F),\\
	\label{F}\tag{$\mathbb{F}$}
	\inf_F\:\mathbb{M}_h(F),
	\end{gather} 
	where infimizations are over normal $1$-currents $F$ in $\R^n$ with coefficients in $\R^m$ subject to $\partial F=F_0$.
\end{defin}
We will see that all three are equivalent minimization problems (\cyan{\cref{equalitymasses}}).
As explained before, $|\cdot|_H$ is the total variation with respect to the norm $H$ on $\R^{m\times n}$ when $F$ is viewed as a $\R^{m\times n}$-valued Radon measure.
The label \labelcref{DM} indicates that $|\cdot|_H$ is applied to objects whose components (or rows) are divergence measure vectorfields \cite{Sil}.
In contrast, $\mathcal{M}_h$ is an $h$-mass defined in the spirit of \textcite{FF60}.
Finally, $\mathbb{M}_h$ denotes the $h$-mass defined via relaxation of the $h$-mass on polyhedral $1$-chains in $\R^n$ with coefficients in $\R^m$ with respect to flat convergence.
It originates from a work of \textcite{W57} which was used by \textcite{F66} to define a mass on flat $k$-chains with weights in an arbitrary coefficient group.

\subsection{Application of multi-material transport for branched transport}
The multi-material transport model was used to convexify a class of \textit{branched transport} problems. In the branched transport problem, one aims to find an optimal mass flux $\mathcal{F}$ (\red{scalar} normal $1$-current) between two (compactly supported) non-negative Radon measures $\nu_+$ and $\nu_-$ with equal mass ($\mathcal{F}$ has to satisfy $\partial\mathcal{F}=\nu_--\nu_+$). The optimality is assessed with respect to a \textit{concave} transportation cost $\tau$ describing the effort $\tau(\theta)$ to move an amount of mass $\theta\in[0,\infty)$ per unit distance (with $\tau(0)=0$). For $\tau(\theta)=\theta^\alpha$ ($\alpha\in[0,1]$ fixed) this Eulerian formulation was introduced by \textcite{X}. At the same time, a Lagrangian formulation was established by \textcite{MSM03}. Both were generalized to arbitrary $\tau$ in \cite{BW}. To the best of found knowledge, the first attempts to reformulate branched transport problems as variants of multi-material transport were in \cite{MM14,MM16} based on \labelcref{M}. In \cite{MM14} the Steiner problem, which has already been studied in the 19th century by Gergonne (1811) and Gauss (1836) (see \cite{SS10} and \cite{BGTZ14}), of finding a network with minimal total length connecting a finite number of points in $\R^n$ (it can be modelled using $\tau\equiv 1$ on $ (0,\infty)$) was rephrased as a problem on rectifiable $1$-currents with coefficients in some discrete group $G$ \cite[Thm.\:2.4]{MM14}. In \cite{MM16} the Gilbert\textendash Steiner problem, whose name derives from a generalization of the Steiner problem by Gilbert (1967) \cite{G67}, was considered: find an optimal rectifiable $1$-current (coefficients in $\mathbb{Z}$) between $m$ source and $m$ sink points with respect to $\tau(\theta)=\theta^\alpha$ ($\alpha\in (0,1)$). It was rewritten as a multi-material transport problem with coefficient group $\mathbb{Z}^m$. This reformulation and the definition of calibrations allowed the authors of \cite{MM16} to prove the optimality of an irrigation tree (one sink, coefficients in $\mathbb{Q}$) in the infinite-dimensional spatial domain $\ell^2$ having infinitely many branchings.

\subsection{Main results}
The main results of this article are
\begin{itemize}
	\item a generalized Kantorovich\textendash Rubinstein formula $\min_{\partial F=F_0}\:\mathbb{E}(F)=\max_\phi\int\phi\cdot\mathrm{d}F_0$, where $\mathbb{E}\in\mathcal{E}=\{ \mathcal{M}_h,|\cdot|_H,\mathbb{M}_h \}$,
	\item a natural definition of calibrations for multi-material transport via extremality conditions,
	\item equality of the functionals in $\mathcal{E}$ (more precisely, given two functionals in $\mathcal{E}$ they coincide on the intersection of their domains of definition), and resulting from that
	\item an integral representation of $\mathcal{M}_h$ and $\mathbb{M}_h$.
\end{itemize}
Below we provide a little more detail on these results; \cref{fig:overview} helps as a reference (see \cref{currentschains} for the rigorous definitions of all spaces and functionals).
We write $\mathcal{N}_k^m(\R^n)$ for the space of normal $k$-currents in $\R^n$ with coefficients in $\R^m$, then $\mathcal{N}_0^m(\R^n)$ coincides with the \purple{(compactly supported)} $\R^m$-valued Radon measures.
Let $F_0\in \mathcal{N}_0^m(\R^n)$ be an admissible source and sink distribution according to \cref{boundaryprop}. Fix a multi-material transport cost $h:\R^m\to[0,\infty)$ and assume that $H$ is generated by $h$. 
We first prove equivalence of the multi-material transport problems \labelcref{DM} and \labelcref{F} by passing to the dual optimization problem.
\begin{thm}[Equivalence of \labelcref{DM,F}, generalized Kantorovich\textendash Rubinstein formula]
\label{EquivalenceKR}
\labelcref{DM,F} are equivalent minimization problems and can be written as maximization over generalized Kantorovich potentials,
\begin{equation*}
\min_F\:|F|_H=\min_F\:\mathbb{M}_h(F)=\max_\phi \int \phi\cdot\mathrm{d }F_0,
\end{equation*}
where $F\in\mathcal{N}_1^m(\R^n)$ with $\partial F=F_0$ and $\phi\in C^{0,1}(\R^n;\R^m)$ with $\J\phi\in\partial H(0)$ a.e.
Minima and maxima are attained.
\end{thm}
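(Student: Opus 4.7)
The plan is to dualize \labelcref{DM} by Fenchel--Rockafellar, obtaining strong duality together with attainment on both the primal and the dual side, and then fold in \labelcref{F} by exploiting that $\mathbb{M}_h$ is the flat lower semicontinuous envelope of $|\cdot|_H$ on polyhedral chains. First I would rewrite \labelcref{DM} as the unconstrained $\inf_F |F|_H + \iota_{\{F_0\}}(\partial F)$, viewing the boundary $\partial$ as a linear operator from $\R^{m\times n}$-valued Radon measures to $\R^m$-valued Radon measures whose formal adjoint is the Jacobian $\J$ (since $\int\phi\cdot\e(\partial F)=\int \J\phi:\e F$ for smooth $\phi$). Computing Fenchel conjugates: $|\cdot|_H^*$ equals the indicator of $\{\omega : H^*(\omega(x))\le 1\}$ (integrating pointwise the dual representation of a norm), and $\iota_{\{F_0\}}^*(\phi)=\int\phi\cdot\e F_0$. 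Since by construction $\partial H(0)$ is the closed unit ball of $H^*$, the Fenchel dual of \labelcref{DM} is exactly $\sup_\phi\{\int\phi\cdot\e F_0 : \J\phi\in \partial H(0)\text{ a.e.}\}$, matching the claimed right-hand side.

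For attainment I would run the direct method on both problems. A minimizing sequence $F_i$ for \labelcref{DM} has bounded $H$-total variation, hence bounded ordinary total variation (as $H$ is a norm); after truncation to the convex hull of $\operatorname{supp}F_0$ (cost-non-increasing), $F_i\ws F^*$ along a subsequence with $\partial F^* = F_0$ and $|F^*|_H\le \liminf |F_i|_H$ by weak-$*$ lower semicontinuity. For the dual, the constraint $\J\phi\in\partial H(0)$ uniformly bounds $\J\phi$, forcing $\phi$ to be globally Lipschitz with a fixed constant; after subtracting $\phi(x_0)\in\R^m$ at some $x_0\in\operatorname{supp}F_0$ (which leaves $\int\phi\cdot\e F_0$ invariant since $F_0(\R^n)=0$), Arzel\`a--Ascoli yields a uniform limit with the Jacobian constraint preserved. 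Strong duality I would then deduce by a Sion-type min-max argument applied to the Lagrangian $L(F,\phi)=|F|_H-\int\J\phi:\e F+\int\phi\cdot\e F_0$ on suitable compact sub-levels.

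To transfer to \labelcref{F}, observe that on polyhedral $1$-chains $\mathbb{M}_h(P)=|P|_H$ by \cref{propertygenerated}, and $|\cdot|_H$ is lower semicontinuous under flat convergence on $\mathcal{N}_1^m$ (since it can be written as the supremum of $F\mapsto F(\omega)$ over test forms $\omega$ with $H^*(\omega)\le 1$, each of which is flat-continuous). Because $\mathbb{M}_h$ is by definition the flat relaxation of its restriction to polyhedral chains, this gives $\mathbb{M}_h\ge|\cdot|_H$, so $\inf\labelcref{F}\ge\inf\labelcref{DM}$. For the reverse, I would prove weak duality for \labelcref{F}: for polyhedral $P$ with $\partial P=F_0$ and any dual-admissible $\phi$, a segment-by-segment computation yields $\int\phi\cdot\e F_0=\int\J\phi:\e P\le \mathbb{M}_h(P)$, which extends to general $F\in\mathbb{F}_1^m$ with $\partial F=F_0$ by relaxation. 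Taking the supremum over $\phi$ and using the already established duality for \labelcref{DM} closes the chain of inequalities. Attainment in \labelcref{F} then follows by polyhedral approximation of any minimizer of \labelcref{DM} with matching $H$-total variation in the limit.

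The hardest step will be strong duality together with primal attainment: $|\cdot|_H$ is only weak-$*$ lower semicontinuous and the constraint $\partial F = F_0$ has empty interior in the natural Banach space of measures, so the textbook Fenchel--Rockafellar theorem does not apply directly. The cleanest workaround is the Sion min-max on compact sub-levels, but this requires carefully choosing topologies (weak-$*$ on the primal, local uniform on the dual) under which the pairing $\int\J\phi:\e F$ is jointly continuous enough to validate min-max exchange. A secondary technicality is the boundary-preserving polyhedral approximation used to transport a minimizer from \labelcref{DM} to \labelcref{F} while matching the $H$-mass in the limit, which I would handle by a Federer--Fleming-type deformation adapted to the $\R^m$-coefficient setting.
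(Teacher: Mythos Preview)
Your chain of inequalities does not close. From $\mathbb{M}_h\geq|\cdot|_H$ you get $\inf\labelcref{F}\geq\inf\labelcref{DM}$, and from weak duality for \labelcref{F} you get $\sup_\phi\leq\inf\labelcref{F}$. Combined with strong duality for \labelcref{DM} this only yields $\sup_\phi=\inf\labelcref{DM}\leq\inf\labelcref{F}$, which is the \emph{same} inequality twice; nothing forces $\inf\labelcref{F}\leq\inf\labelcref{DM}$. Your proposed fix, a boundary-preserving polyhedral approximation of a minimizer $F^\ast$ of \labelcref{DM} ``with matching $H$-total variation in the limit'', is precisely the hard direction $\mathbb{M}_h(F^\ast)\leq|F^\ast|_H$. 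Federer--Fleming deformation only gives $\mathbb{M}(P_j)\leq C\,\mathbb{M}(F^\ast)$ with a dimensional constant, not convergence; the sharper simultaneous-approximation theorem gives $\mathbb{M}_h(P_j)\to\mathbb{M}_h(F^\ast)$, but since $\mathbb{M}_h=|\cdot|_H$ on polyhedrals this says $|P_j|_H\to\mathbb{M}_h(F^\ast)$, and concluding $\to|F^\ast|_H$ is exactly the equality of masses you are trying to avoid. In the paper's logical order this inequality is \cref{eqforopt}, which is a \emph{corollary} of the present theorem, so your route is circular.

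The paper avoids this by dualizing \labelcref{F} \emph{independently}, not by comparison with \labelcref{DM}. It works on $X=\mathcal{F}_{1,K}^m(\R^n)$, $Y=\mathcal{F}_{0,K}^m(\R^n)$ with $\varphi=\mathbb{M}_h$, $\psi=\iota_{\{F_0\}}$, and checks an Attouch--Brezis constraint qualification (the cone generated by $\mathrm{dom}(\Phi)-M$ is a closed subspace), which delivers strong duality \emph{and} dual attainment directly. The key ingredients are the identification $(\mathcal{F}_{0,K}^m,|\cdot|_K^\flat)^\ast=C^{0,1}(K;\R^m)$ via the Kantorovich--Rubinstein norm, and a lemma showing that the abstract dual constraint $\langle F,\partial^\dagger\phi\rangle\leq\mathbb{M}_h(F)$ for all $F$ is equivalent to $\J\phi\in\partial H(0)$ a.e. For \labelcref{DM} the paper also sidesteps your Sion difficulty by reversing the roles: take $\phi\in C^1(K;\R^m)$ as primal variable, $A\phi=\J\phi$, and $g=\iota_{\partial H(0),K}$; since $0$ is interior to $\partial H(0)$, $g$ is continuous at $0\in A\,\mathrm{dom}(f)$, and the textbook Fenchel--Rockafellar constraint qualification applies immediately.
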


Motivated by \cref{EquivalenceKR} we give the following definition of a calibration. 
\begin{defin}[Calibration]
\label{caldef}
We say that $\Phi$ is a \textbf{calibration} for $F$ if $\Phi=\J\phi$ and the pair $(F,\phi)$ satisfies the \textit{primal-dual extremality conditions} associated with problem \labelcref{F} and its dual.
These can e.g.\:be stated as
\begin{itemize}
\item feasibility of $(F,\phi)\in\mathcal{N}_1^m(\R^n)\times C^{0,1}(\R^n;\R^m)$, i.e.\:$\partial F=F_0$ and $\J\phi\in\partial H(0)$ a.e.,
\item equality of the primal and dual functional values, i.e.\:$\cyan{\mathbb{M}_h(F)}=\int \phi\cdot\mathrm{d }F_0$.
\end{itemize}
\end{defin}
\begin{rem}[Extremality conditions]
Feasibility of the primal-dual pair and equality of the primal and dual functional values are the simplest way to express the primal-dual optimality conditions.
If strong duality holds and both primal and dual problem admit an optimizer (as in our case by \cref{EquivalenceKR}),
they are necessary and sufficient for the optimality of $F$ as well as of $\phi$.
An equivalent alternative would be to use the standard first order conditions of convex analysis given in \cite{R67} (see \cref{cwJ}).
\end{rem}
The following statement is a trivial implication.
\begin{coro}[Existence of calibration]
\label{excal}
If $\Phi=\J\phi$ is a calibration for $F$, then $(F,\phi)$ is an optimal pair for problem \labelcref{F} and its dual. Conversely, the weak Jacobian of any optimal $\phi$ is a calibration for all optimal $F$, and such a calibration always exists.
\end{coro}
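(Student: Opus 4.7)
The plan is to observe that all three parts follow almost immediately from \cref{EquivalenceKR} together with the definition of calibration in \cref{caldef}, so the main task is to marshal the strong duality statement and unfold the definitions.

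For the first direction, suppose $\Phi=\J\phi$ is a calibration for $F$. Then by \cref{caldef} the pair $(F,\phi)$ is primal-dual feasible and satisfies $\mathbb{M}_h(F)=\int\phi\cdot\mathrm{d}F_0$. I would now invoke the standard weak-duality sandwich: for every admissible $F'$ with $\partial F'=F_0$ and every admissible $\phi'\in C^{0,1}(\R^n;\R^m)$ with $\J\phi'\in\partial H(0)$ a.e.\ we have $\mathbb{M}_h(F')\geq\int\phi'\cdot\mathrm{d}F_0$ (this is the easy inequality underlying \cref{EquivalenceKR}). Applying this with $\phi'=\phi$ shows $F$ is a primal minimizer, and applying it with $F'=F$ shows $\phi$ is a dual maximizer.

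For the converse direction, let $\phi$ be any dual optimizer and $F$ any primal optimizer. Both pieces of feasibility are built in, and strong duality together with attainment — exactly what \cref{EquivalenceKR} provides — gives $\mathbb{M}_h(F)=\max_{\phi'}\int\phi'\cdot\mathrm{d}F_0=\int\phi\cdot\mathrm{d}F_0$. This is precisely the equality of primal and dual functional values demanded by \cref{caldef}, so $\J\phi$ is a calibration for $F$. Finally, existence of a calibration follows because \cref{EquivalenceKR} asserts that both the minimum over $F$ and the maximum over $\phi$ are attained: pick any dual optimizer $\phi$, take its weak Jacobian, and the preceding step shows it calibrates every primal optimizer.

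The only (very minor) wrinkle I expect is ensuring that ``weak Jacobian'' in \cref{caldef} is consistent with the regularity $\phi\in C^{0,1}(\R^n;\R^m)$ appearing in \cref{EquivalenceKR} — i.e.\ that the Rademacher-type a.e.\ gradient on which the constraint $\J\phi\in\partial H(0)$ is phrased really is a \textit{bona fide} object attached to any Lipschitz dual optimizer. This is a point of notation rather than of substance, and I would settle it with a one-line reference to the conventions fixed in \cref{preliminaries}. Beyond that, there is no real obstacle: the corollary is a pure bookkeeping consequence of strong duality plus attainment, which is why the authors call it a trivial implication.
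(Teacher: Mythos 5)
Your argument is correct and is exactly the reasoning the paper intends: the corollary is stated without proof as a "trivial implication" of \cref{EquivalenceKR} (strong duality with attainment) combined with the definition of a calibration as feasibility plus equality of primal and dual values, which is precisely the weak-duality sandwich you spell out. No gaps.
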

Let us briefly compare our definition of a calibration with the one given in \cite{MMT17}. A calibration according to \cite[Def.\:1.12]{MMT17} for a rectifiable $F=\theta\otimes\vec{e}\mathcal{H}^1\mres S$ with multiplicity $\theta\in\mathbb{Z}^m$ (the representation of $F$ is as in \cref{MMT}) would be a smooth differential $1$-form $\omega\in\Omega_m^1(\R^n)=C^\infty(\R^n;\R^{m\times n})$ such that
\begin{enumerate}[label=C.\arabic*]
\item $(\omega(x)\vec{e}(x))\cdot\theta(x)=h(\theta (x))$ for $\mathcal{H}^1$-a.e.\:$x\in S$,\label{c1}
\item $\e\omega=\textup{curl}(\omega)=0$\todo[disable]{should we always use $\mathrm d$?} (weak curl applied row-wise, cf.\:\cite[p.\:8]{Sil}),\label{c2}
\item $(\omega(x)\vec{u})\cdot\vartheta\leq h(\vartheta)$ for all $x\in\R^n,\vartheta\in\R^m$, and unit vectors $\vec{u}\in\cyan{\mathcal{S}^{n-1}}$.\label{c3}
\end{enumerate}
\cyan{Such an} $\omega$ \cyan{(if it exists)} also \cyan{calibrates} in the sense of \cref{caldef} (and therefore certifies optimality of $F$):
Indeed, the closedness condition \ref{c2} is equivalent to the existence of a differential $0$-form $\phi\in\Omega_m^0(\R^n)=C^\infty(\R^n;\R^m)$ with $\omega=\cyan{\e\phi=\J\phi}$.
Condition \ref{c3} then is equivalent to the feasibility condition $\J\phi\in\partial H(0)$ a.e.,
and condition \ref{c1} then is equivalent to the equality of primal and dual functional values due to \cyan{($:$ denotes the Frobenius inner product on matrices)}
\begin{equation*}
\purple{\mathbb{M}_h(F)=|F|_H}
=\int_Sh(\theta)\,\mathrm d\mathcal{H}^1
=\int_S(\omega(x)\vec{e}(x))\cdot\theta(x)\,\mathrm d\mathcal{H}^1
=\int\omega:\e F
=\int\phi\cdot\e (\partial F)
=\int\phi\cdot\mathrm dF_0,
\end{equation*}
where \cref{equalitymasses} was used in the first equation.
Obviously, the difference of the above definition to \cref{caldef} is merely the regularity:
\cyan{w}hile above, $\omega$ or equivalently $\phi$ is required to be infinitely differentiable, $\phi$ is only required Lipschitz in \cref{caldef}.
(As a consequence of this reduced regularity, condition \ref{c1} can no longer be applied since $\J\phi$ might not be defined on $S$,
but it \cyan{can} be reformulated using a smoothing procedure, see \cite[Thm.\:3.2]{Sil}\cyan{.})
Therefore, \cref{caldef} allows many more candidates as calibrations,
and the existence statement in \cref{EquivalenceKR} implies that the regularity required in \cref{caldef} is just the right one:
a calibration for optimal normal currents $F$ (not necessarily integral) always exists (which is not the case for the $\omega$ from above).
Therefore, as already mentioned before, our calibration represents a necessary \textit{and} sufficient (as opposed to just sufficient) condition for optimality in \labelcref{F}.

\red{We next turn to the equality of the different mass functionals.}
\begin{thm}[Equality of masses $\mathcal{M}_h,|\cdot|_H,\mathbb{M}_h$]
\label{equalitymasses}
We have 
\begin{center}
\begin{minipage}{0.5\textwidth}
\begin{enumerate}[label=\textup{M.\arabic*}]
\item \qquad $\mathcal{M}_h=|\cdot|_H$\qquad on\qquad $\mathcal{M}_1^m(\R^n)$\qquad and\label{item1}
\item \qquad $|\cdot|_H=\mathbb{M}_h$\qquad on\qquad $\mathcal{F\!\!M}_1^m(\R^n)$.\label{item2}
\end{enumerate}
\end{minipage}
\end{center}
\end{thm}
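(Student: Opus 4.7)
The proof naturally splits into \ref{item1} and \ref{item2}, handled by rather different arguments.

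For \ref{item1}, my plan is to express both $\mathcal{M}_h$ and $|\cdot|_H$ as suprema of $\omega\mapsto\int\omega:\mathrm dF$ over $\R^{m\times n}$-valued test fields subject to a pointwise polar constraint, and to verify that the two constraints coincide. By definition of $\mathcal{M}_h$, the constraint reads $\sup_{|\vec e|\leq 1}h_*(\omega(x)\vec e)\leq 1$, while the standard dual characterization of the $H$-total variation (derivable from the integral formula $|F|_H=\int H(\mathrm dF/\mathrm d|F|)\,\mathrm d|F|$) reads $H_*(\omega(x))\leq 1$. It then suffices to check $H_*(M)=\sup_{|\vec e|\leq 1}h_*(M\vec e)$, which is immediate from \cref{generated} since $\partial h(0)$ is the unit ball of $h_*$. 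The two ambient test classes (smooth $1$-forms versus bounded continuous densities) are reconciled by a standard mollification of $\omega$.

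For \ref{item2}, I first record the identity $|P|_H=\mathbb{M}_h(P)$ on polyhedral chains, which follows directly from \cref{propertygenerated}, since $|P|_H=\int H(\theta\otimes\vec s)\,\mathrm d\mathcal H^1=\int h(\theta)\,\mathrm d\mathcal H^1$. The inequality $|F|_H\leq\mathbb{M}_h(F)$ on $\mathcal{F\!\!M}_1^m$ then follows from flat lower semicontinuity of $|\cdot|_H$: any flat-convergent sequence $P_i\to F$ with bounded $\mathbb{M}_h(P_i)$ also converges weak-$*$ as $\R^{m\times n}$-valued Radon measures, and weak-$*$ lower semicontinuity of $|\cdot|_H$ yields $|F|_H\leq\liminf\mathbb{M}_h(P_i)$; infimizing over recovery sequences gives the claim.

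The reverse inequality $\mathbb{M}_h(F)\leq|F|_H$ is the main content, and here I would invoke the subresult announced from \cref{dualitysec} that $\mathbb{M}_h=|\cdot|_H$ on every minimizer of the multi-material transport problem. A natural route is localization in space: cover $\R^n$ by a fine grid of half-open cubes $Q$ of diameter $\delta$, and on each $Q$ replace $F\mres Q$ by a minimizer $F^*_Q$ of the local problem $\min_{\partial G=\partial(F\mres Q)}|G|_H$. By the subresult, each $F^*_Q$ satisfies $\mathbb{M}_h(F^*_Q)=|F^*_Q|_H\leq|F\mres Q|_H$, and since the cubes are disjoint, the assembly $\tilde F_\delta=\sum_Q F^*_Q$ satisfies $\mathbb{M}_h(\tilde F_\delta)\leq\sum_Q|F\mres Q|_H=|F|_H$. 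A flat-convergence argument $\tilde F_\delta\to F$ as $\delta\to 0$, together with flat lower semicontinuity of $\mathbb{M}_h$ (inherent in its envelope definition), then concludes the proof. The main obstacle will be making this localization rigorous: in particular, defining $F\mres Q$ and its boundary $\partial(F\mres Q)$ as an admissible source/sink distribution when $F\in\mathcal{F\!\!M}_1^m\setminus\mathcal{N}_1^m$ (so that $\partial F$ need not be a Radon measure), and controlling $\mathcal{F}(\tilde F_\delta,F)$ as $\delta\to 0$.
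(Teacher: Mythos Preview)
Your proposal follows essentially the same route as the paper for both parts. Two remarks are in order.

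For the localization step in \ref{item2}, you should take $F^*_Q$ to be a minimizer of $\mathbb{M}_h$ (problem \labelcref{F}), not of $|\cdot|_H$: the announced subresult (\cref{eqforopt}) asserts $\mathbb{M}_h(F)=|F|_H$ only for \labelcref{F}-minimizers, and at this stage of the argument you do not yet know that a $|\cdot|_H$-minimizer is also an $\mathbb{M}_h$-minimizer (the easy inequality $|\cdot|_H\leq\mathbb{M}_h$ and the equality of optimal values from \cref{EquivalenceKR} both point the wrong way for this). With $F^*_Q$ an $\mathbb{M}_h$-minimizer, \cref{eqforopt} gives $\mathbb{M}_h(F^*_Q)=|F^*_Q|_H$ and simultaneously shows $F^*_Q$ minimizes $|\cdot|_H$, whence $|F^*_Q|_H\leq|F\mres Q|_H$.

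The obstacle you flag is handled in the paper by a case split rather than by pushing the localization through directly. For $F\in\mathcal{N}_1^m(\R^n)$, slicing theory guarantees $\partial(F\mres Q)\in\mathcal{N}_0^m(\R^n)$ for almost every grid size $\delta$, so the local problems are well-posed; the convergence $\tilde F_\delta\to F$ is obtained by first showing $\tilde F_\delta\ws F$ (testing against smooth $\phi$, freezing $\phi$ at cube centers, and using $\partial F^*_Q=\partial(F\mres Q)$) and then upgrading to flat convergence via \cref{Compactness}, since $\mathbb{N}_h(\tilde F_\delta)$ is uniformly bounded. For $F\in\mathcal{F\!\!M}_1^m(\R^n)\setminus\mathcal{N}_1^m(\R^n)$ the paper does not attempt to slice $F$ at all; instead, the deformation theorem and \cref{flatform} produce $B_j$ with $\mathbb{M}_h(B_j)\to 0$ and $F-B_j\in\mathcal{N}_1^m(\R^n)$, reducing to the normal case already established.
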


Let us briefly provide the idea for proving $\mathbb{M}_h\leq |\cdot|_H$.
\red{\Cref{EquivalenceKR} directly implies $\mathbb{M}_h(F)\leq|F|_H$ for all minimizers $F\in\mathcal{N}_1^m(\R^n)$ of \labelcref{F},
which together with the (straightforward) inequality $\mathbb{M}_h(F)\geq|F|_H$ implies that }\purple{all minimizers $F$ of \labelcref{F} satisfy $\mathbb{M}_h(F)=|F|_H$ \red{(and thus also minimize \labelcref{DM}).}}
Now assume that $F\in\mathcal{N}_1^m(\R^n)$ is arbitrary. We will use suitable $\delta$-partitions $(P_\delta^i)\subset\R^n$ of $\R^n$ to construct $F_\delta\in\mathcal{N}_1^m(\R^n)$ such that its restriction $F_i=F_\delta\mres P_\delta^i$ to $P_\delta^i$ is optimal for \labelcref{F} subject to the boundary constraint $\partial F_i=\partial(F\mres P_\delta^i)$. We will then show that $F_\delta\to F$ for $\delta\to 0$ with respect to flat convergence. The optimality of $F_\delta\mres P_\delta^i$ and the lower semicontinuity of $\mathbb{M}_h$ will then yield the desired inequality $\mathbb{M}_h(F)\leq |F|_H$. The case $F\in\mathcal{F\!\!M}_1^m(\R^n)$ can be reduced to $F\in\mathcal{N}_1^m(\R^n)$ by applying the deformation theorem.

Note that \cref{equalitymasses} yields an integral representation for the functionals $\mathbb{M}_h$ and $\mathcal{M}_h$. In particular, if $F\in\mathcal{F}_1^m(\R^n)$ has finite mass, then by \cite[Thm.\:5.5]{Sil} we have the decomposition $F=F_{\textup{rect}}+F_{\textup{diff}}$ into a rectifiable part $F_{\textup{rect}}=\theta\otimes\vec{e}\mathcal{H}^1\mres S$ and a diffuse part $F_{\textup{diff}}$, i.e.\:$F_{\textup{diff}}$ is singular with respect to $\mathcal{H}^1\mres R$ for all countably $1$-rectifiable $R\subset\R^n$. 
Hence we get
\begin{equation*}
\mathcal{M}_h(F)
=\mathbb{M}_h(F)
=\int_S h(\theta)\,\mathrm{d}\mathcal{H}^1+\int H\left( \frac{ \e F_{ \textup{diff} }}{\e |F_{\textup{diff} }| } \right)\,\mathrm{d}|F_{\textup{diff}}|.
\end{equation*}
Note that $\frac{ \e F_{ \textup{diff} }}{\e |F_{\textup{diff} }| }$ may have rank larger than one so that the second integral cannot be expressed in terms of $h$.
By \cref{propertygenerated}, though, one may write
\begin{equation*}
H(M)=\inf\left\{h(\theta_1)+\ldots+h(\theta_i)\,|\,M=\theta_1\otimes\vec e_1+\ldots+\theta_i\otimes\vec e_i\text{ for }i\in\mathbb{N},\theta_1,\ldots,\theta_i\in\R^m,\vec e_1,\ldots,\vec e_i\in\cyan{\mathcal{S}^{n-1}}\right\}.
\end{equation*}
For example, if $m=n=2$ and $h(\theta)=\max\{|\theta_1|,|\theta_2|,|\theta_1-\theta_2|\}$, then, defining
\begin{equation*}
\vec e_1=\frac1{2\sqrt2}{1+\sqrt3\choose1-\sqrt3},\
\vec e_2=\frac1{2\sqrt2}{1-\sqrt3\choose1+\sqrt3},\
\vec e_3=\frac1{\sqrt2}{1\choose1},\
t_1=\sqrt{\frac23}{1\choose0},\
t_2=\sqrt{\frac23}{0\choose1},\
t_3=\frac{\sqrt3-1}{\sqrt6}{1\choose1},
\end{equation*}
the identity matrix $I$ satisfies
\begin{multline*}
H(I)
=H(t_1\otimes\vec e_1+t_2\otimes\vec e_2+t_3\otimes\vec e_3)
\leq H(t_1\otimes\vec e_1)+H(t_2\otimes\vec e_2)+H(t_3\otimes\vec e_3)
=h(t_1)+h(t_2)+h(t_3)\\
=\tfrac{1+\sqrt3}{\sqrt2}
=\underbrace{\tfrac1{2\sqrt2}\left(\begin{smallmatrix}1+\sqrt3&1-\sqrt3\\1-\sqrt3&1+\sqrt3 \end{smallmatrix}\right)}_{\overline M}\cyan{:}I
\leq\sup_{M\in\partial H(0)} M\cyan{:}I
=H(I)
\end{multline*}
so that all inequalities are actually equalities.
This indicates that the cost $H(I)$ arises as the homogenization of flows of material bundles proportional to $t_1,t_2,t_3$ in directions $\vec e_1,\vec e_2,\vec e_3$, respectively, see \cref{fig:homogenization}.

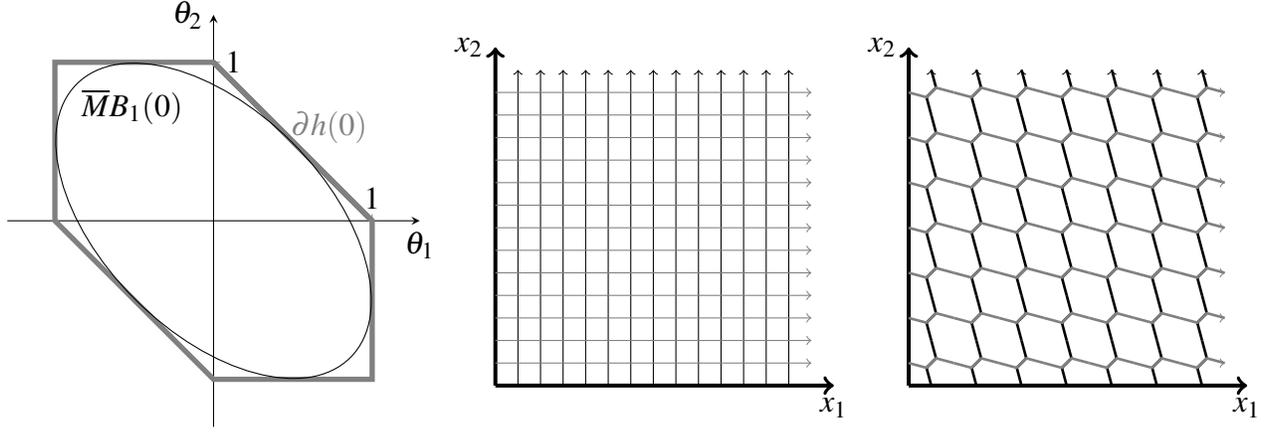
\begin{figure}
\centering
\begin{tikzpicture}
\begin{axis}[
x = 5em, y = 5em,
ymax = 1.3,
ymin = -1.3,
xmax = 1.3,
xmin = -1.3,
axis x line=center,
axis y line=center,
xlabel = {$\theta_1$}, 
ylabel = {$\theta_2$}, 
x label style={at={(axis description cs:1,.5)},anchor=north},
y label style={at={(axis description cs:.5,1)},anchor=east},
x tick label style={anchor=south, outer sep=3pt},
y tick label style={anchor= west, outer sep=3pt},
xtick = {1},
xticklabels = {$1$},
ytick = {1},
yticklabels = {$1$},
major tick length = 5pt,
]
\addplot[gray,line width=2pt] coordinates {(1,0) (0,1) (-1,1) (-1,0) (0,-1) (1,-1) (1,0)(0,1)};
\draw (axis cs:0,0) ellipse [rotate=45,x radius=3.5em,y radius=6.1em];
\node[gray,anchor=south west] at (axis cs:0.42,0.42) {$\partial h(0)$};
\node[anchor=north west] at (axis cs:-.9,.9) {$\overline MB_1(0)$};
\end{axis}
\end{tikzpicture}%
\begin{tikzpicture}
\draw[->,line width=1.5pt] (0,0) -- (4.5,0) node[anchor=north] {$x_1$};
\draw[->,line width=1.5pt] (0,0) -- (0,4.5) node[anchor=east] {$x_2$};
\foreach \x in {0.3,0.6,...,4.2} \draw[->] {(\x,0) -- (\x,4.2)};
\foreach \y in {0.3,0.6,...,4.2} \draw[->,gray] {(0,\y) -- (4.2,\y)};
\end{tikzpicture}%
\begin{tikzpicture}
\tikzset{
  microstructure/.pic={
    \draw[line width=1pt] (0,-.3) -- (-.0634,-.0634) -- (.0634,.0634) -- (0,.3);
    \draw[gray,line width=1pt] (-.3,0) -- (-.0634,-.0634) -- (.0634,.0634) -- (.3,0);
  }
}
\draw[->,line width=1.5pt] (0,0) -- (4.5,0) node[anchor=north] {$x_1$};
\draw[->,line width=1.5pt] (0,0) -- (0,4.5) node[anchor=east] {$x_2$};
\foreach \x in {0.3,0.9,...,4.2}
	\foreach \y in {0.3,0.9,...,4.2}
		\pic at (\x,\y) {microstructure};
\foreach \x in {0.3,0.9,...,4.2}
	\draw[->] (\x,4.199) -- (\x,4.2);
\foreach \y in {0.3,0.9,...,4.2}
	\draw[->,gray] (4.199,\y) -- (4.2,\y);
\end{tikzpicture}%
\caption{Microstructure underlying a diffuse flux in $n=2$ dimensions with $m=2$ materials.
Left: $\partial h(0)$ for $h(\theta)=\max\{|\theta_1|,|\theta_2|,|\theta_1-\theta_2|\}$ as well as $\overline MB_1(0)\subset\partial h(0)$.
Middle: Macroscopic view of the diffuse flux $F=I\mathcal L^2\mres[0,1]^2$ (for $I$ the identity matrix and $\mathcal L^2$ the Lebesgue measure)
with the first material flowing \cyan{to the right (grey)} and the second \cyan{upwards (black)}.
Right: Microscopic view of the flux, showing how the microscopic flow pattern of the black and grey material produce the macroscopic flux $F$ at cost $H(I)$.}
\label{fig:homogenization}
\end{figure}

\Cref{equalitymasses} already implies $\mathcal{M}_h=\mathbb{M}_h$ on $\mathcal{F\!\!M}_1^m(\R^n)$, which \cyan{directly yields equality on} $\mathcal{F}_1^m(\R^n)$.

\begin{coro}[Equality of $\mathcal{M}_h$ and $\mathbb{M}_h$]
\label{equalmassflat}
$\mathcal{M}_h=\mathbb{M}_h$ on $\mathcal{F}_1^m(\R^n)$.
\end{coro}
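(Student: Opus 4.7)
The plan is to split into two cases depending on whether $F$ has finite mass. If $F\in\mathcal{F\!\!M}_1^m(\R^n)=\mathcal{F}_1^m(\R^n)\cap\mathcal{M}_1^m(\R^n)$, then \cref{equalitymasses} applies directly: part~\ref{item1} gives $\mathcal{M}_h(F)=|F|_H$ and part~\ref{item2} gives $|F|_H=\mathbb{M}_h(F)$, so chaining the two equalities yields the claim. All the remaining work lies in the case $F\in\mathcal{F}_1^m(\R^n)\setminus\mathcal{M}_1^m(\R^n)$, where I need to show $\mathcal{M}_h(F)=\mathbb{M}_h(F)=+\infty$.

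The first equality is immediate from the identification $\mathcal{M}_1^m(\R^n)=\{\mathcal{M}_h<\infty\}$ recorded in \cref{fig:overview}: since $F\notin\mathcal{M}_1^m(\R^n)$ we have $\mathcal{M}_h(F)=+\infty$. For the second, I would argue by contradiction. Suppose $\mathbb{M}_h(F)<+\infty$. By the definition of the relaxation there is a sequence $(P_i)$ of polyhedral $1$-chains converging flatly to $F$ with $\limsup_i\mathbb{M}_h(P_i)<+\infty$. Each $P_i$ lies in $\mathcal{F\!\!M}_1^m(\R^n)$ and satisfies $|P_i|_H=\mathbb{M}_h(P_i)$, so viewed as $\R^{m\times n}$-valued Radon measures the sequence $(P_i)$ is uniformly bounded in total variation (since $H$ is equivalent to any other norm on $\R^{m\times n}$, its total variation controls the standard one).

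By Banach\textendash Alaoglu a subsequence converges weakly-$*$ to some Radon measure $G\in\mathcal{M}_1^m(\R^n)$ with $|G|_H\leq\liminf_i|P_i|_H<+\infty$ by lower semicontinuity of the total variation. Both the flat convergence $P_i\to F$ and the weak-$*$ convergence $P_i\to G$ imply convergence in the sense of $\R^m$-valued distributions (by testing against smooth compactly supported $\R^{m\times n}$-valued functions, which one can identify with $1$-forms), so $F=G$ as $1$-currents. Hence $F\in\mathcal{M}_1^m(\R^n)$, contradicting the case assumption, and $\mathbb{M}_h(F)=+\infty$ as desired.

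The main obstacle I anticipate is the final identification of the flat and weak-$*$ limits: one has to check that both convergences yield the same distributional action on a common test class of smooth compactly supported forms. This is routine once the definitions are expanded, but some care is needed because the flat norm is naturally paired with Lipschitz $\R^m$-valued functions while weak-$*$ convergence of $\R^{m\times n}$-valued measures is paired with continuous compactly supported matrix-valued test fields; smooth compactly supported forms lie in both classes and suffice to determine the limit current uniquely.
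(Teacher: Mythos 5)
Your proposal is correct and follows essentially the same route as the paper's one-line argument: on $\mathcal{F\!\!M}_1^m(\R^n)$ chain \crefthmpart{equalitymasses}{item1} with \crefthmpart{equalitymasses}{item2}, and on $\mathcal{F}_1^m(\R^n)\setminus\mathcal{M}_1^m(\R^n)$ observe that both functionals equal $+\infty$. Your Banach--Alaoglu contradiction argument for the infinite-mass case is sound (the identification of the flat and weak-$*$ limits goes through exactly as you anticipate, via \cref{flatform} or \cref{polysemi}), but it amounts to re-deriving the inequality $\tilde C_1\mathcal{M}_h\leq\mathbb{M}_h$ already recorded as relation \labelcref{massequi} in \cref{massequiv}, which is what the paper implicitly invokes and which would reduce that case to a single citation.
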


\begin{rem}[Non-compact support]
\label{nocompact}
For technical reasons, as in \cite[Sec.\:4.1.12]{F}, we define the elements of $\mathcal{F}_1^m(\R^n)$ to be compactly supported,
which is also done in other works (e.g.\:by White, see remark on \cite[p.\:162]{Wh99} or \cite{W992}, or by	 Smirnov, see definition of the subset of normal charges on \cite[p.\:842]{S}).
Nevertheless, our results stay true without this assumption. Indeed, letting $F$ a flat $1$-chain with coefficients in $\R^m$ and finite mass (possibly with unbounded support), application of \cite[Sec.\:4]{F66} yields the existence of a Radon measure $\mu_F:\mathcal{B}(\R^n)\to[0,\infty)$ with $\mu_F(B)=\mathbb{M}_h(F\mres B)$ for all $B\in\mathcal{B}(\R^n)$. Picking a countable Borel partition $(B_i)\subset\mathcal{B}(\R^n)$ with each $B_i$ being bounded, we obtain
\begin{equation*}
\mathbb{M}_h(F)=\mu_F(\R^n)=\sum_i\mu_F(B_i)=\sum_i\mathbb{M}_h(F\mres B_i)=\sum_i\mathcal{M}_h(F\mres B_i)=\mathcal{M}_h(F),
\end{equation*}
where \cref{equalmassflat} was used in the fourth equation and the $\sigma$-additivity of $\mathcal{M}_h$ (see \crefthmpart{equalitymasses}{item1}) in the last equation.
\end{rem}

\section{Preliminaries}
\label{preliminaries}
In this section\cyan{,} we introduce our notation and recapitulate the notions of currents and relevant subsets.
\subsection{Basic notation}\todo[inline,disable]{notations that are only locally used in one place I would explain there -- perhaps $f_\varepsilon$, dist, effective domain, adjoint, affine hull?}
\label{basicnot}
Let $k\in\mathbb{N}$. One may identify $\R^k=\R^{k_1\times k_2}$ if $k=k_1k_2$. We will use the following standard notation.
\begin{itemize}
		\item $m,n$: \textbf{material} and \textbf{spatial dimension}
		\item $\mathcal{S}^{n-1}$: \textbf{unit sphere} in $\R^n$
		\item $x\cdot y,M:N$: \textbf{Euclidean inner product} of vectors and \textbf{Frobenius inner product} of matrices
		\item $x\otimes y$: \textbf{dyadic product} 
		\item $|.|,h_*$: \textbf{Euclidean} or \textbf{Frobenius norm}, \textbf{dual norm} of a norm $h$
		\item $(x_i)\subset S$: \textbf{sequence} $x:\mathbb{N}\to S$ of elements in some set $S$ with $x_i=x(i)$
		\item $B_r(x)$: open \textbf{Euclidean ball} with radius $r>0$ and centre $x\in\R^n$
		\item $[x,y]$: \textbf{line segment} $x+[0,1](y-x)$, where $x,y\in\R^n$
		\item $\iota_S$: \textbf{indicator function} for set $S$,
		\begin{equation*}
		\iota_S(x)=\begin{cases*}
		0&if $x\in S$,\\
		\infty&else
		\end{cases*}
		\end{equation*}
		\item $\textup{dist}(X,Y)=\inf\{ |x-y|\:|\:x\in X,y\in Y \}$: \textbf{distance} between sets $X,Y\subset\R^n$ (abbreviate $\textup{dist}(x,Y)=\textup{dist}(\{ x \},Y)$)
		\item $f_\varepsilon=f\ast\eta_\varepsilon$: (component-wise) \textbf{$\varepsilon$-mollification} (if well-defined) of function $f:\R^n\to\R^k$ with $\eta_\varepsilon(x)=\varepsilon^{-n}\eta(\varepsilon^{-1}x)$, where $\eta$ positive and radially symmetric mollifier on $\R^n$ with support in $B_1(0)$
		\item $f_i\rightrightarrows f$: \textbf{uniform convergence} of sequence of functions $f_i:D\to V$ ($V$ a normed vector space) to function $f$
		\item $|f|_\infty$: \textbf{supremum norm} of $f:D\to V$ with  $(V,\|.\|)$ a normed vector space, defined by $|f|_\infty=\sup_D\|f\|$
		\item $C^{0,1}(D;\R^k),C_c^\infty(\R^n;\R^k)$: $\R^k$-valued \textbf{Lipschitz functions} on $D\subset\R^n$, smooth and compactly supported \textbf{test functions} $\R^n\to\R^k$
		\item $\mathcal{B}(\cyan{\mathcal{T}})$: \textbf{Borel $\sigma$-algebra} of topological space $\cyan{\mathcal{T}}$
		\item \cyan{$\mu\mres A$: \textbf{restriction} to $A\in\mathcal{A}$ of map $\mu:\mathcal{A}\to X$ from $\sigma$-algebra $\mathcal{A}$ to some set $X$ (e.g.\:outer or Radon measure),
		\begin{equation*}
		(\mu\mres A)(B)=\mu(A\cap B)\qquad\textup{for all }B\in\mathcal{A}
		\end{equation*}}
		\item $\mathcal{L}^k,\mathcal{H}^k$: $k$-dimensional \textbf{Lebesgue} and \textbf{Hausdorff measure}
		\item $|\cyan{F}|$: \textbf{total variation measure} of \cyan{Radon measure $F:\mathcal{B}(\R^n)\to\R^k$}
		\item \cyan{$\frac{\e F}{\e \nu }$: \textbf{Radon\textendash Nikodym derivative} of Radon measure $F:\mathcal{B}(\R^n)\to\R^k$ with respect to measure $\nu:\mathcal{B}(\R^n)\to[0,\infty)$ (if $F$ is absolutely continuous with respect to $\nu$)}
		\item $|F|_H$: \textbf{total variation with respect to norm $H$} on $\R^k$ of Radon measure $F:\mathcal{B}(\R^n)\to\R^k$,
		\begin{equation*}
		|F|_H=\sup\bigg\{\sum_{i}H(F(B_i))\:\bigg|\:(B_i)\subset \mathcal{B}(\R^n)\textup{ countable partition of }\R^n\bigg\}
		=\int H\left( \frac{\e F}{\e |F|} \right)\,\mathrm{d}|F|
		\end{equation*}
		\item $\ws$: convergence in \textbf{weak-$*$ topology} (equivalent to pointwise convergence for distributions)
		\item $\partial f(x)$: \textbf{subdifferential} of convex $f:\R^k\to\R$ at $x\in\R^k$,
		\begin{equation*}
		\partial f(x)=\left\{ g\in\R^k\:\middle|\: f(y)\geq f(x)+g\cdot (y-x)\textup{ for all }y\in\R^k \right\}
		\end{equation*}
		\item $V^*,f^*,\langle v,v^*\rangle$: \textbf{topological dual space} of normed vector space $V$ and \textbf{convex conjugate} $f^*:V^*\to\R\cup\{-\infty,\infty\} $ of $f:V\to\R\cup\{\infty \}$, 
		\begin{equation*}
		f^*(v^*)=\sup_{v\in V}\:\langle v,v^*\rangle-f(v)
		\end{equation*}
		for $v^*\in V^*$, where $\langle v,v^*\rangle$ \textbf{dual pairing}
		\item $\text{dom}(f)$: \textbf{effective domain} $\{ x\in V\,|\, f(x)<\infty \}$ of $f:V\to\R\cup\{\infty \}$	
		\item $A^\dagger$: \textbf{adjoint} of linear operator $A$ 
		\item $\textup{aff}(S)$: \textbf{affine hull} of $S\subset\R^n$
\end{itemize}
\subsection{Currents and flat chains with coefficients in $\R^m$}
\label{currentschains}
Let $k\in\mathbb{N}_0$ and $m\in\mathbb{N}$. Moreover, assume that $h:\R^m\to[0,\infty)$ is a norm. In this section, we define $\mathcal{D}_k^m(\R^n)$, the space of $k$-currents in $\R^n$ with coefficients in $\R^m$, as well as relevant functionals and subsets. Initially, we follow the approach in \cite{MMSR16} and set $\mathcal{D}_k^m(\R^n)=\mathcal{D}_k^1(\R^n)^m$, where $\mathcal{D}_k^1(\R^n)$ is the space of (classical) real $k$-currents as in \cite{F,FF60}. We slightly generalize the definitions incorporating the norm $h$ (yielding, inter alia, the $h$-mass functional $\mathcal{M}_h$). The construction in \cite{MMSR16} corresponds to the case $h=|\cdot|$. We will also recall some classical statements about currents and flat chains which we will use in this article.
\begin{itemize}
\item $\Lambda_k\R^n$: \textbf{$k$-vectors} in $\R^n$ 
\item $\alpha_1\wedge\ldots\wedge\alpha_k$: \textbf{simple $k$-vector} in $\Lambda_k\R^n$, where $\alpha_i\in\R^n$ and $\wedge$ \textbf{exterior multiplication}
\item $|\cdot|_0$: \textbf{mass} on $\Lambda_k\R^n$ defined by 
\begin{equation*}
|\alpha|_0=\inf\left\{\sum_{i=1}^N|\alpha_i|_0\:\middle|\:\alpha=\sum_{i=1}^N\alpha_i,\alpha_i\in\Lambda_k\R^n\:\textup{simple}\right\},
\end{equation*}
where $|\beta|_0=\sqrt{\textup{det}(\beta_i\cdot\beta_j)_{ij}}$ for simple $\beta=\beta_1\wedge\ldots\wedge\beta_k\in\Lambda_k\R^n$ is the volume of the parallelotope spanned by $\beta_1,\ldots,\beta_k$
\item $\Lambda^k\R^n$: \textbf{$k$-covectors} in $\R^n$, i.e.\:linear forms $L:\Lambda_k\R^n\to\R$ 
\item $\Lambda_m^k\R^n$: $k$-covectors in $\R^n$ \textbf{with coefficients} in $\R^m$, i.e.\:bilinear maps $B:\R^m\times\Lambda_k\R^n\to\R$ with \textbf{components} $B_i\in\Lambda^k\R^n$ defined by
\begin{equation*}
B_i(\alpha)=B(b_i,\alpha),
\end{equation*}
where $b_1,\ldots,b_m$ is the standard basis of $\R^m$
\item $|\cdot|_h$: \textbf{$h$-comass} on $\Lambda_m^k\R^n$,
\begin{align*}
|B|_h&=\sup\left\{h_*(B(\cdot,\alpha))\:\middle|\:\alpha\in\Lambda_k\R^n\:\textup{simple},|\alpha|_0\leq 1\right\}\\
&=\sup\left\{ B(\theta,\alpha) \:\middle|\:\alpha\in\Lambda_k\R^n\:\textup{simple},|\alpha|_0\leq 1,\theta\in\R^m,h(\theta)\leq1\right\}
\end{align*}
\item $\Omega_{c}^k(\R^n)$: \textbf{compactly supported smooth differential $k$-forms} $\omega:\R^n\to\Lambda^k\R^n$, i.e.\:if $\omega=\sum_If_I\e p_I$ (where $f_I:\R^n\to\R$, $\e p_I=\e p_{i_1}\wedge\ldots\wedge \e p_{i_k}$, $I\in\{ (i_1,\ldots,i_k)\:|\:1\leq i_1<\ldots<i_k\leq n \}$, $p_i(x)=x_i$, $\e$ total differential, \cyan{$\e p_{i_1}\wedge\ldots\wedge \e p_{i_k}$ denoting the exterior product of the $\e p_{i_j}$}), then $f_I\in C_c^\infty(\R^n)$ 
\item $\Omega_{m,c}^k(\R^n)$: compactly supported and smooth differential $k$-forms $\omega:\R^n\to\Lambda_m^k\R^n$, i.e.\:\textbf{components} $x\mapsto\omega_i(x)=(\omega(x))_i$ lie in $\Omega_{c}^k(\R^n)$ for all $i=1,\ldots,m$
\item $\cyan{\e\omega}$: \textbf{exterior differential} $\e\omega\in\Omega_{m,c}^{k+1}(\R^n)$ of $\omega\in\Omega_{m,c}^k(\R^n)$,
\begin{equation*}
(\e\omega)_i=\e\omega_i
\end{equation*}
\item $|\cdot|_{h}^\infty$: \textbf{$h$-comass} on $\Omega_{m,c}^k(\R^n)$,
\begin{equation*}
|\omega|_{h}^\infty=\sup_{x\in\R^n}|\omega(x)|_{h}
\end{equation*}
\item $|\cdot|_{h,K}^\infty$: $h$-comass on $\Omega_{m,c}^k(\R^n)$ \textbf{with respect to compact $K\subset\R^n$},
\begin{equation*}
|\omega|_{h,K}^\infty=\sup_{x\in K}|\omega(x)|_{h}
\end{equation*}
\item $\mathcal{D}_k^m(\R^n)$: \textbf{$k$-currents in $\R^n$ with coefficients in $\R^m$}, i.e.\:linear functionals $T:\Omega_{m,c}^k(\R^n)\to\R$ with \textbf{components} $T_i:\Omega_c^k(\R^n)\to\R$, $T_i(\omega)=T(\omega^i)$ (with $\omega^i$ defined by $\omega^i_j=\omega$ if $j=i$ and $\omega^i_j=0$ else) that are continuous (in the distributional sense with respect to coefficients of type $f_I$ as above)
\item $\partial T$: \textbf{boundary} of $T\in\mathcal{D}_k^m(\R^n)$,
\begin{equation*}
\partial T(\omega)=T(\e\omega)
\qquad\textup{for all $\omega\in\Omega_{m,c}^{k-1}(\R^n)$}
\end{equation*}
\item $\mathcal{M}_h$: \textbf{$h$-mass} on $\mathcal{D}_k^m(\R^n)$, 
\begin{equation*}
\mathcal{M}_h(T)=\sup_{|\omega|_h^\infty\leq 1}T(\omega)
\qquad\textup{for all $T\in\mathcal{D}_k^m(\R^n)$}
\end{equation*}
\item $\mathcal{M}$: $h$-mass for $h=|\cdot|$ the Euclidean norm; note that all $h$-masses are norm-equivalent
\item $\mathcal{M}_k^m(\R^n)$: $k$-currents in $\R^n$ with coefficients in $\R^m$ and \textbf{finite mass} (also termed \textit{representable by integration}),
\begin{equation*}
\mathcal{M}_k^m(\R^n)=\mathcal{D}_k^m(\R^n)\cap\{ \mathcal{M}_h<\infty \}
=\mathcal{D}_k^m(\R^n)\cap\{ \mathcal{M}<\infty \}
\end{equation*}
\item $\textup{supp}(T)$: \textbf{support} of $T\in\mathcal{D}_k^m(\R^n)$,
\begin{equation*}
\textup{supp}(T)=\R^n\backslash U,
\end{equation*}
where $U\subset\R^n$ largest open set with $T(\omega)=0$ for all $\omega\in \Omega_{m,c}^k(\R^n)$ with support in $U$
\end{itemize}
\begin{memo}[$k$-currents with finite mass and \textbf{Radon measures}]
	\label{currentmeasure}
	If $T\in\mathcal{M}_0^m (\R^n)$, then each component $T_i:C_c^\infty (\R^n)\to\R$ is a distribution of order $0$. Hence, by the Riesz representation theorem it can be identified with a Radon measure $\mu_i:\mathcal{B}(\R^n)\to\R$. Therefore, the current $T$ can be seen as a vector-valued measure $\cyan{\vec{\mu}}_0(T):\mathcal{B}(\R^n)\to\R^m$ with components $\cyan{\vec{\mu}}_0(T)_i=\mu_i$ for all $i=1,\ldots,m$. 
	Similarly, every $T\in\mathcal{M}_1^m (\R^n)$ can be viewed as a matrix-valued measure $\cyan{\vec{\mu}}_1(T):\mathcal{B}(\R^n)\to\R^{m\times n}$; each component $T_i:\Omega_c^1(\R^n)=C_c^\infty(\R^n;\R^n)\to\R$ is a classical $1$-current with finite mass, thus it can be viewed as a vector-valued measure $\mathcal{F}_i:\mathcal{B}(\R^n)\to\R^n$ via $T_i(\phi)=\int\phi\cdot\mathrm{d}\mathcal{F}_i$ for all $\phi\in C_c^\infty(\R^n;\R^n)$. We identify $T$ with the \red{$\R^{m\times n}$-valued} measure $\cyan{\vec{\mu}}_1(T)$ with row $i$ being equal to $\mathcal{F}_i$ for all $i=1,\ldots,m$. In general, any $T\in\mathcal{M}_k^m (\R^n)$ can be represented as a Radon measure $\cyan{\vec{\mu}}_k(T):\mathcal{B}(\R^n)\to\R^m\otimes\Lambda_k\R^n$ (cf.\:\cite[Sec.\:4.4]{MMSR16}). If $m=1$, then (using the dual representation of the total variation and the density of smooth in continuous functions) $\mathcal{M}(T)=|\cyan{\vec{\mu}}_k(T)|(\R^n)$.
\end{memo}
\begin{itemize}
\item to simplify the notation, we will write $T=\cyan{\vec{\mu}}_k(T)$ for all $T\in\mathcal{M}_k^m (\R^n)$
\item $\mathcal{N}_{k,K}^m(\R^n)$: \textbf{normal $k$-currents} in $\R^n$ with coefficients in $\R^m$ and \textbf{support in compact} $K\subset\R^n$, 
\begin{equation*}
\mathcal{N}_{k,K}^m(\R^n)=\mathcal{D}_{k}^m(\R^n)\cap\{ \mathcal{M}_h+\mathcal{M}_h\circ\partial <\infty \}\cap\{ \textup{supp}\subset K \}
\end{equation*}
\item $\mathcal{N}_{k}^m(\R^n)$: \textbf{normal $k$-currents} in $\R^n$ with coefficients in $\R^m$, 
\begin{equation*}
\mathcal{N}_{k}^m(\R^n)=\bigcup_{\substack{K\subset\R^n\\\textup{compact}}}\mathcal{N}_{k,K}^m(\R^n)
\end{equation*}
\end{itemize}
\begin{memo}[Normal $\{ 0,1 \}$-currents and \textbf{divergence measure vectorfields}]
By \cref{currentmeasure} $\mathcal{N}_0^m(\R^n)$ coincides with the space of compactly supported Radon measures $\mathcal{B}(\R^n)\to\R^m$. Moreover, the space $\mathcal{N}_1^m(\R^n)$ equals the space of $m$-tuples of compactly supported divergence measure vectorfields, i.e.\:vector-valued Radon measures whose distributional divergence is a (scalar) Radon measure \cyan{\cite[Sec.\:5]{Sil}}.
\end{memo}
Next, we follow \cite{F} and define flat $1$-chains with coefficients in $\R^m$ via completions of the spaces $\mathcal{N}_{k,K}^m(\R^n)$ using dual flat $h$-seminorms.
\begin{itemize}
\item $|\omega|_{h,K}^\flat$: \textbf{flat $h$-seminorm} of $\omega\in\Omega_{m,c}^k(\R^n)$ \textbf{with respect to compact $K\subset\R^n$},
\begin{equation*}
|\omega|_{h,K}^\flat=\max\{ |\omega|_{h,K}^\infty,|\e \omega|_{h,K}^\infty \}
\end{equation*}
 \item $|T|_{h,K}^\flat$: \textbf{dual flat $h$-seminorm} of $T\in\mathcal{D}_k^m(\R^n)$ with respect to compact $K\subset\R^n$,
 \begin{equation*}
 |T|_{h,K}^\flat=\sup\{  T(\omega)\:|\:\omega\in\Omega_{m,c}^k(\R^n), |\omega|_{h,K}^\flat\leq 1 \}
 \end{equation*}
 \begin{memo}[Boundary has smaller flat $h$-seminorm]
 \label{boundflat}
\cyan{Note that
 \begin{equation*}
  |\partial T|_{h,K}^\flat=\sup\{  T(\e\omega)\:|\:\omega\in\Omega_{m,c}^k(\R^n), |\omega|_{h,K}^\flat\leq 1 \}\leq | T|_{h,K}^\flat
 \end{equation*}
 because $|\e\omega|_{h,K}^\flat=\max\{ |\e\omega|_{h,K}^\infty,|\e^2 \omega|_{h,K}^\infty \}=|\e\omega|_{h,K}^\infty\leq |\omega|_{h,K}^\flat\leq 1$ for every $\omega$ in the supremum.}
 \end{memo}
 \item $|\cdot|_{K}^\flat$: flat $h$-seminorm with respect to compact $K\subset\R^n$ and with $h=|\cdot|$ the Euclidean norm
\item $\mathcal{F}_{k,K}^m(\R^n)$: \textbf{flat $k$-chains} in $\R^n$ with coefficients in $\R^m$ and \textbf{support in compact} $K\subset\R^n$,
\begin{equation*}
\mathcal{F}_{k,K}^m(\R^n)=\overline{\mathcal{N}_{k,K}^m(\R^n)}^{|\cdot|_{h,K}^\flat},
\end{equation*}
i.e.\:the closure of $\mathcal{N}_{k,K}^m(\R^n)$ in $\mathcal{D}_{k}^m(\R^n)$ with respect to $|\cdot|_{h,K}^\flat$
\item $\mathcal{F\!\!M}_{k,K}^m(\R^n)$: flat $k$-chains in $\R^n$ with coefficients in $\R^m$, support in compact $K\subset\R^n$, and \textbf{finite mass},
\begin{equation*}
\mathcal{F\!\!M}_{k,K}^m(\R^n)=\mathcal{F}_{k,K}^m(\R^n)\cap\mathcal{M}_{k,K}^m(\R^n)
\end{equation*}
\item $\mathcal{F}_{k}^m(\R^n)$: \textbf{flat $k$-chains} in $\R^n$ with coefficients in $\R^m$,
\begin{equation*}
\mathcal{F}_{k}^m(\R^n)=\bigcup_{\substack{K\subset\R^n\\\textup{compact}}}\mathcal{F}_{k,K}^m(\R^n)
\end{equation*}
\item $\mathcal{F\!\!M}_{k}^m(\R^n)$: flat $k$-chains in $\R^n$ with coefficients in $\R^m$ and \textbf{finite mass},
\begin{equation*}
\mathcal{F\!\!M}_{k}^m(\R^n)=\mathcal{F}_{k}^m(\R^n)\cap\mathcal{M}_{k}^m(\R^n)
\end{equation*}
\end{itemize}
To ensure that we can apply classical theory, we also recall the alternative definition (resulting in a space $\mathbb{F}_k^m(\R^n)$) which was given by Fleming \cite{F66} using Whitney's textbook \cite{W57}. The (non-trivial) equality $\mathcal{F}_k^m(\R^n)=\mathbb{F}_k^m(\R^n)$ was elaborated by Federer \cite[Thm.\:4.1.23 \&\:Lem.\:4.2.23]{F}.
\begin{itemize}
\item $\mathbb{P}_{k}^m(\R^n)$: \textbf{polyhedral $k$-chains} in $\R^n$ with coefficients in $\R^m$, i.e.\:finite sums $\sum_s\theta_s\otimes\vec{s}\mathcal{H}^k\mres s$\todo[disable]{$\mres$ not introduced} of $k$-simplices $s$ with unit \textbf{orientation} $\vec{s}=s_1\wedge\ldots\wedge s_k\in\Lambda_k\R^n$ (i.e.\:$\vec{s}$ is tangential to the relative interior of $s$ and satisfies $|\vec{s}|_0=1$) and \textbf{multiplicity} $\theta_s\in\R^m$\red{;
note $\mathbb{P}_{k}^m(\R^n)\subset\mathcal{D}_k^m(\R^n)$ as well as $\partial\sum_s\theta_s\otimes\vec{s}\mathcal{H}^k\mres s=\sum_s\theta_s\otimes\vec{s}\mathcal{H}^{k-1}\mres\partial s$
with $\partial s$ the oriented boundary of simplex $s$}
\item $\mathbb{M}_h(P)$: \textbf{$h$-mass} of $P=\sum_s\theta_s\otimes\vec{s}\mathcal{H}^k\mres s\in\mathbb{P}_{k}^m(\R^n)$,
\begin{equation*}
\mathbb{M}_h(P)=\sum_sh(\theta_s)\mathcal{H}^k(s)
\end{equation*}
\item $\mathbb{F}_h(P)$: \textbf{flat $h$-norm} of $P\in\mathbb{P}_{k}^m(\R^n)$,
\begin{equation*}
\mathbb{F}_h(P)=\inf\left\{\mathbb{M}_h(Q)+\mathbb{M}_h(\partial Q-P) \:\middle|\: Q\in\mathbb{P}_{k+1}^m(\R^n)\right\}
\end{equation*}
\item $\mathbb{F}$: flat $h$-norm for $h=|\cdot|$ the Euclidean norm; note that all \cyan{flat} $h$-norms are norm-equivalent
\item $\mathbb{P}_{k,K}^m(\R^n)$: polyhedral $k$-chains with coefficients in $\R^m$ and \textbf{support in compact and convex} $K\subset\R^n$
\end{itemize}
\begin{itemize}
\item $\mathbb{F}_{k,K}^m(\R^n)$: \textbf{flat $k$-chains} in $\R^n$ with coefficients in $\R^m$ and \textbf{support in compact and convex} $K\subset\R^n$,
\begin{equation*}
\mathbb{F}_{k,K}^m(\R^n)=\overline{ \mathbb{P}_{k,K}^m(\R^n)}^{\mathbb{F}_h},
\end{equation*}
i.e.\:the completion of $\mathbb{P}_{k,K}^m(\R^n)$ with respect to the norm $\mathbb{F}_h$ \cyan{in $\mathcal{D}_k^m(\R^n)$}
\item convergence with respect to $\mathbb{F}_h$ is called \textbf{flat convergence}
\item $\mathbb{F}_{k}^m(\R^n)$: \textbf{flat $k$-chains} in $\R^n$ with coefficients in $\R^m$,
\begin{equation*}
\mathbb{F}_{k}^m(\R^n)=\bigcup_{\substack{K\subset\R^n\\\textup{compact \& convex}}}\mathbb{F}_{k,K}^m(\R^n)
\end{equation*}
\item $\mathbb{M}_h(F)$: \textbf{$h$-mass} of $F\in\mathbb{F}_{k}^m(\R^n)$,
\red{defined as relaxation of the $h$-mass for polyhedral chains, i.e.}
\begin{equation*}
\mathbb{M}_h(F)=\inf\left\{ \liminf_j\:\mathbb{M}_h(P_j)\:\middle|\: (P_j)\subset\mathbb{P}_k^m(\R^n),\mathbb{F}_h(P_j-F)\to 0  \right\}
\end{equation*}
\item $\mathbb{M}$: $h$-mass for $h=|\cdot|$ the Euclidean norm; note that all $h$-masses are norm-equivalent
\item $\mathbb{N}_h$: \textbf{sum of mass and boundary mass} $\mathbb{N}_h:\mathbb{F}_k^m(\R^n)\to [0,\infty]$,\todo[disable]{here $\partial$ is used for flat chains}
\begin{equation*}
\mathbb{N}_h=\mathbb{M}_h+\mathbb{M}_h\circ\partial
\end{equation*}
\end{itemize}
\begin{memo}[Flat convergence]
\label{polysemi}
If $P\in\mathbb{P}_{k,K}^1(\R^n)$ for compact and convex $K\subset\R^n$, then $\mathbb{F}(P)=|P|_{K_\delta}^\flat$ for all $\delta>0$ by \cite[p.\:382 \&\:Lem.\:4.2.23]{F}, where $K_\delta=\{ \textup{dist}(\cdot,K)\leq\delta \}$. Therefore, flat convergence $\mathbb{F}(\cyan{F^j})\to 0$ of $(\cyan{F^j})\subset\mathbb{F}_k^1(\R^n)$ inside some bounded domain implies $|\cyan{F^j}|_{K}^\flat\to 0$ for sufficiently large compact $K\subset\R^n$. On the other hand, if $|\cyan{F^j}|_{K}^\flat\to 0$ inside some compact and convex $K\subset\R^n$, then $|\cyan{F^j}|_K^\flat=|\cyan{F^j}|_{K_\delta}^\flat$, thus $\mathbb{F}(\cyan{F^j})\to 0$. This extends to flat chains with coefficients in $\R^m$ since, using norm-equivalence, we have
\begin{equation}
C_1\sum_{i=1}^{m}|F_i|_K^\flat\leq|F|_{h,K}^\flat\leq C_2\sum_{i=1}^{m}|F_i|_K^\flat\label{flatnormequi}
\end{equation} 
for all $F\in\mathbb{F}_{k,K}^m(\R^n)$ and compact $K\subset\R^n$ (with some constants $C_1,C_2>0$ depending on $h$).\todo[disable]{notation clash: sequence $F_i$ vs. components}
\end{memo}
\begin{memo}[$\mathbb{F}_{k}^m(\R^n)=\mathcal{F}_{k}^m(\R^n)$]
\label{equiflat}
By \cref{polysemi} we have $\mathbb{F}_{k}^1(\R^n)\subset\mathcal{F}_{k}^1(\R^n)$. On the other hand, for every $F\in \mathcal{F}_{k}^1(\R^n)$ \cite[Thm.\:4.1.23]{F} implies the existence of some compact and convex $K\subset\R^n$ and a sequence of polyhedral $k$-chains $(P_j)\subset\mathbb{P}_{k,K}^1(\R^n)$ with $|P_j-F|_K^\flat\to 0$, thus $F\in \mathbb{F}_{k}^1(\R^n)$. Therefore, we have $\mathbb{F}_{k}^1(\R^n)=\mathcal{F}_{k}^1(\R^n)$ \cite[p.\:341]{F}. Using canonical isomorphisms and relation \labelcref{flatnormequi}, it is straightforward to check that
\begin{equation*}
\mathbb{F}_{k}^m(\R^n)=(\mathbb{F}_{k}^1(\R^n))^m=(\mathcal{F}_{k}^1(\R^n))^m=\mathcal{F}_{k}^m(\R^n).
\end{equation*}
\end{memo}
\begin{memo}[Mass equivalence]
\label{massequiv}
We have $\mathbb{M}\geq\mathcal{M}$ on $\mathcal{F}_k^1(\R^n)$. Indeed, assuming that $\mathbb{M}(F)<\infty$ for $F\in\mathcal{F}_k^1(\R^n)$, there exists a sequence $(P_j)\subset\mathbb{P}_k^1(\R^n)$ with $\mathbb{F}(P_j-F)\to 0$ and $\mathbb{M}(P_j)\to\mathbb{M}(F)$. It is easy to check (applying \cite[p.\:382]{F}) that this implies $P_j\ws F$. Hence, using $\mathbb{M}(P_j)=\mathcal{M}(P_j)$ (which is readily established on polyhedral chains),
\begin{equation*}
\mathbb{M}(F)=\lim_j\:\mathbb{M}(P_j)=\lim_j\mathcal{M}(P_j)\geq \mathcal{M}(F).
\end{equation*}
The reverse (non-trivial) inequality $\mathbb{M}\leq\mathcal{M}$ follows from \cite[Thm.\:4.1.23]{F}. By norm-equivalence and $\mathbb{M}=\mathcal{M}$ on $\mathcal{F}_k^1(\R^n)$, one obtains
\begin{equation}
C_1\sum_{i=1}^{m}\mathbb{M}(F_i)\leq\mathbb{M}_h(F)\leq C_2\sum_{i=1}^{m}\mathbb{M}(F_i)\qquad\textup{and}\qquad\tilde{C}_1\mathcal{M}_h(F)\leq\mathbb{M}_h(F)\leq \tilde{C}_2\mathcal{M}_h(F)\label{massequi}
\end{equation} 
for all $F\in\mathcal{F}_{k}^m(\R^n)$, where constants $C_1,C_2,\tilde{C}_1,\tilde{C}_2>0$ depend on $h$. We will see that one can take $\tilde{C}_1=\tilde{C}_2=1$.
\end{memo}
Finally, we collect further classical statements \cyan{relevant to this article.}
\begin{prop}[{Formula for flat norm, \cite[Thm.\:3.1]{F66} and \cyan{\cite[pp.\:367 \&\:382]{F}}}]
\label{flatform}
For all $F\in\mathcal{F}_{k}^m(\R^n)$ we have 
\begin{equation*}
\mathbb{F}_h(F)=\inf_{F_1,F_2}\:\mathbb{M}_h(F_1)+\mathbb{M}_h(F_2),
\end{equation*}
where the infimum is over $F_1\in\mathcal{F}_{k}^m(\R^n)$ and $F_2\in\mathcal{F}_{k+1}^m(\R^n)$ with $F=F_1+\partial F_2$. \red{Analogously}, for all compact $K\subset\R^n$ and $F\in\mathcal{F}_{k,K}^1(\R^n)$,
\begin{equation*}
\cyan{|F|_K^\flat=\inf_G\mathcal{M}(G)+\mathcal{M}(F-\partial G),}
\end{equation*}
\cyan{where infimum is over $G\in\mathcal{F}_{k+1,K}^1(\R^n)$.}
\end{prop}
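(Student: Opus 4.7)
I split the proof into the two identities, starting with the formula for $\mathbb{F}_h$ on general flat chains; the $|\cdot|_K^\flat$ version follows the same template with $\mathbb{M}_h$ replaced by $\mathcal{M}$ and the polyhedral characterization of $\mathbb{F}_h$ replaced by the dual characterization of $|\cdot|_K^\flat$ via test forms. In both cases the strategy is the same: the upper bound is proved by smoothing the given decomposition via polyhedral approximations of $F_1,F_2$ and invoking the polyhedral formula, while the lower bound is proved by extracting an admissible decomposition from a flat-norm-optimal polyhedral approximation of $F$.

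For the upper bound $\mathbb{F}_h(F)\leq\mathbb{M}_h(F_1)+\mathbb{M}_h(F_2)$, fix a decomposition $F=F_1+\partial F_2$ with finite right-hand side. By the relaxation definition of $\mathbb{M}_h$, choose polyhedral sequences $P_j^1\to F_1$ and $P_j^2\to F_2$ in flat norm with $\mathbb{M}_h(P_j^i)\to\mathbb{M}_h(F_i)$. The polyhedral definition of $\mathbb{F}_h$, applied with $Q=P_j^2$, gives
\begin{equation*}
\mathbb{F}_h(P_j^1+\partial P_j^2)\leq\mathbb{M}_h(P_j^2)+\mathbb{M}_h\bigl(\partial P_j^2-(P_j^1+\partial P_j^2)\bigr)=\mathbb{M}_h(P_j^1)+\mathbb{M}_h(P_j^2).
\end{equation*}
Since $\partial$ is $\mathbb{F}_h$-continuous (cf.\ \cref{boundflat}), $P_j^1+\partial P_j^2\to F_1+\partial F_2=F$ in flat norm, and the continuous extension of $\mathbb{F}_h$ from polyhedral chains to the completion yields $\mathbb{F}_h(F)\leq\mathbb{M}_h(F_1)+\mathbb{M}_h(F_2)$.

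For the reverse inequality pick polyhedral $P_j\to F$ with $\mathbb{F}_h(P_j)\to\mathbb{F}_h(F)$. By the polyhedral definition select $Q_j\in\mathbb{P}_{k+1}^m(\R^n)$ with $\mathbb{M}_h(Q_j)+\mathbb{M}_h(\partial Q_j-P_j)\leq\mathbb{F}_h(P_j)+1/j$, and set $R_j=P_j-\partial Q_j\in\mathbb{P}_k^m(\R^n)$, so $P_j=R_j+\partial Q_j$ and $\mathbb{M}_h(R_j)+\mathbb{M}_h(Q_j)$ is uniformly bounded. I would then extract subsequential flat limits $R_j\to F_1$ and $Q_j\to F_2$ after confining the supports to a common compact set (using \cref{polysemi} to reduce to a compact convex $K$ and, if necessary, a deformation-theorem truncation of $R_j,Q_j$ to $K$ with controlled mass growth). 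Continuity of $\partial$ gives $F=F_1+\partial F_2$, and the lower semicontinuity of $\mathbb{M}_h$ under flat convergence, which is built into the relaxation definition, yields $\mathbb{M}_h(F_1)+\mathbb{M}_h(F_2)\leq\liminf_j[\mathbb{M}_h(R_j)+\mathbb{M}_h(Q_j)]\leq\mathbb{F}_h(F)$.

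The main obstacle is the compactness step: extracting a flat-convergent subsequence from mass-bounded polyhedral chains with uniformly bounded supports, which reduces (via the componentwise identification of \cref{equiflat} and the norm equivalence \labelcref{massequi}) to the classical compactness theorem of Federer and Fleming for normal chains with real coefficients. For the second identity on $|\cdot|_K^\flat$, the upper bound $|F_1+\partial G|_K^\flat\leq\mathcal{M}(F_1)+\mathcal{M}(G)$ follows directly by testing against $\omega\in\Omega_{m,c}^k(\R^n)$ with $|\omega|_K^\flat\leq 1$ and using $\partial G(\omega)=G(\e\omega)$ together with $|\omega|_K^\infty,|\e\omega|_K^\infty\leq 1$; the reverse inequality proceeds as in the first identity via polyhedral approximation $P_j\to F$ in the dual flat seminorm, the polyhedral coincidence $\mathbb{F}(P_j)=|P_j|_K^\flat$ of \cref{polysemi}, and the mass equivalence $\mathbb{M}=\mathcal{M}$ of \cref{massequiv}.
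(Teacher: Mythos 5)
The paper does not actually prove \cref{flatform}; it quotes it from \cite[Thm.\:3.1]{F66} and Federer, so I measure your argument against those classical proofs. Your upper bound for the first identity is correct (if slightly roundabout: one can get $\mathbb{F}_h(F)\leq\mathbb{F}_h(F_1)+\mathbb{F}_h(\partial F_2)\leq\mathbb{M}_h(F_1)+\mathbb{M}_h(F_2)$ directly from $\mathbb{F}_h\leq\mathbb{M}_h$ and $\mathbb{F}_h\circ\partial\leq\mathbb{M}_h$ on polyhedral chains plus relaxation). The gap is in the lower bound, at exactly the step you flag as ``the main obstacle'' and then wave through: there is \emph{no} compactness theorem for polyhedral (or flat) chains with merely uniformly bounded mass and supports in a fixed compact set. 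The Federer--Fleming/Fleming compactness theorem you invoke --- the paper's \cref{Compactness} --- requires a uniform bound on $\mathbb{N}_h=\mathbb{M}_h+\mathbb{M}_h\circ\partial$. Your $Q_j$ and $R_j=P_j-\partial Q_j$ only have bounded mass; their boundaries $\partial Q_j=P_j-R_j$ and $\partial R_j=\partial P_j$ have uncontrolled mass, since $\mathbb{M}_h(P_j)$ and $\mathbb{M}_h(\partial P_j)$ need not stay bounded (indeed $\mathbb{M}_h(F)$ and $\mathbb{M}_h(\partial F)$ may both be infinite while $\mathbb{F}_h(F)<\infty$). And bounded mass alone genuinely fails to give flat precompactness for $k\geq1$: the polyhedral $1$-chains $T_j$ given by the segment $[0,\vec e/j]$ with multiplicity $j$ have mass $1$ and support in the unit ball, yet every subsequential flat limit would have to coincide with the weak-$*$ limit $\vec e\,\delta_0$, which is not a flat $1$-chain (a finite-mass flat $1$-chain cannot charge an $\mathcal{H}^1$-null set, \cite[4.1.20]{F}); hence no subsequence converges in flat norm.

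The standard repair is Fleming's, and it needs no compactness at all. Given $\varepsilon>0$, choose polyhedral $P_j$ with $\mathbb{F}_h(F-P_j)\leq\varepsilon2^{-j}$, decompose $P_1$ and each increment $P_{j+1}-P_j$ near-optimally as $R_j+\partial Q_j$ with $\mathbb{M}_h(R_j)+\mathbb{M}_h(Q_j)\leq\mathbb{F}_h(P_{j+1}-P_j)+\varepsilon2^{-j}$, and set $F_1=\sum_jR_j$, $F_2=\sum_jQ_j$. These series converge absolutely in mass, hence in flat norm, to flat chains; $F=F_1+\partial F_2$ by continuity of $\partial$; and subadditivity together with the lower semicontinuity of $\mathbb{M}_h$ gives $\mathbb{M}_h(F_1)+\mathbb{M}_h(F_2)\leq\mathbb{F}_h(F)+C\varepsilon$. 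Your proof of the second identity inherits the same gap through the phrase ``proceeds as in the first identity''; note also that Federer's own route to $|F|_K^\flat=\inf_G\mathcal{M}(G)+\mathcal{M}(F-\partial G)$ is a Hahn--Banach duality argument rather than polyhedral approximation, although transferring the first identity via \cref{polysemi,massequiv,equiflat} as you propose is legitimate once the first identity is actually established.
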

\begin{prop}[{Simultaneous approximation \cite[Thm.\:5.6]{F66}}]
\label{simax}
Let $F\in\mathcal{N}_k^m(\R^n)$. There exists a sequence $(P_i)\subset\mathbb{P}_k^m(\R^n)$ with $\mathbb{F}(P_i-F)\to 0$, $\mathbb{M}_h(P_i)\to\mathbb{M}_h(F)$, and $\mathbb{M}_h(\partial P_i)\to\mathbb{M}_h(\partial F)$.
\end{prop}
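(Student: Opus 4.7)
The plan is to invoke Fleming's simultaneous approximation theorem \cite[Thm.~5.6]{F66} directly with coefficient group $G=(\R^m,h)$. Since $\R^m$ is a finite-dimensional normed vector space, it is a complete normed abelian group with translation-invariant metric, so Fleming's hypotheses are immediately satisfied and the result follows at once. For completeness I would sketch the key ideas of his construction, adapted to our notation.

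\emph{Step 1 (Mollification).} Starting from $F\in\mathcal{N}_k^m(\R^n)$, I would consider the mollified current $F_\varepsilon=F\ast\eta_\varepsilon$. Standard estimates yield $F_\varepsilon\to F$ with respect to $\mathbb{F}_h$, and mollification commutes with $\partial$, so that $\partial F_\varepsilon=(\partial F)_\varepsilon$. Lower semi-continuity of $\mathbb{M}_h$ and of $\mathbb{M}_h\circ\partial$ then gives $\liminf_\varepsilon\mathbb{M}_h(F_\varepsilon)\geq\mathbb{M}_h(F)$ and the analogous inequality for the boundary mass, while the reverse inequalities come from Jensen's inequality applied to the convex integrand $H$ in the integral representation of $|\cdot|_H$ (available via \cref{equalitymasses}).

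\emph{Step 2 (Polyhedral discretization).} Each $F_\varepsilon$ is represented by a smooth density $f_\varepsilon\colon\R^n\to\R^m\otimes\Lambda_k\R^n$. I would partition $\R^n$ into a cubic grid of side $\delta$ with shift $\tau\in[0,\delta)^n$, triangulate each cube, and assign to each $k$-simplex $s$ a multiplicity $\theta_s\in\R^m$ encoding the local average of $f_\varepsilon$. A Fubini argument over $\tau$ selects a grid for which the flat distance between the resulting polyhedral chain and $F_\varepsilon$ is $O(\delta)$, the boundary mass accumulated on the grid skeleton remains controlled by $\mathbb{M}_h(\partial F_\varepsilon)$, and the Riemann sum $\sum_s h(\theta_s)\mathcal{H}^k(s)$ approximates $\mathbb{M}_h(F_\varepsilon)$. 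A standard diagonal extraction along $\varepsilon\to0$ and $\delta\to0$ would then produce the desired sequence $(P_i)$.

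The main technical obstacle will be the last point of step~2: a single $k$-simplex contributes a rank-one tensor $\theta_s\otimes\vec{s}$, whereas $f_\varepsilon(x)$ may have higher rank, so a naive cell-by-cell assignment produces an $h$-mass that strictly exceeds $\int H(f_\varepsilon)\,\mathrm d\mathcal{L}^n=\mathbb{M}_h(F_\varepsilon)$. To close this gap I would exploit the envelope representation
\begin{equation*}
H(M)=\inf\left\{h(\theta_1)+\ldots+h(\theta_i)\,\middle|\,M=\theta_1\otimes\vec e_1+\ldots+\theta_i\otimes\vec e_i,\,i\in\mathbb{N},\,\theta_j\in\R^m,\,\vec e_j\in\mathcal{S}^{n-1}\right\}
\end{equation*}
from \cref{propertygenerated}: within each grid cell one subdivides into bundles of parallel simplices along several directions $\vec e_j$ with multiplicities $\theta_j$ nearly realizing the infimum, so that the aggregated $h$-mass per cell tends to $H(f_\varepsilon(x))$ times the cell volume (the microstructure in \cref{fig:homogenization} is exactly this construction). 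Combined with the lower semi-continuity inequalities from step~1, this forces convergence of both $\mathbb{M}_h(P_i)$ and $\mathbb{M}_h(\partial P_i)$ to their respective limits.
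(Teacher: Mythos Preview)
Your opening sentence is exactly right, and it is all the paper does: \cref{simax} is stated in the preliminaries with the citation \cite[Thm.~5.6]{F66} and no proof is given, since Fleming's theorem applies verbatim to the coefficient group $G=(\R^m,h)$.

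The sketch you add afterwards is unnecessary and in fact problematic. In Step~1 you justify $\limsup_\varepsilon\mathbb{M}_h(F_\varepsilon)\leq\mathbb{M}_h(F)$ by invoking the integral representation $\mathbb{M}_h=|\cdot|_H$ from \cref{equalitymasses}, and in Step~2 you use the envelope formula for $H$ from \cref{propertygenerated}. Both of these are specific to $k=1$ (the norm $H$ lives on $\R^{m\times n}\cong\R^m\otimes\Lambda_1\R^n$), whereas \cref{simax} is stated for arbitrary $k$; your sketch therefore does not cover the general statement. Even restricted to $k=1$, appealing to \cref{equalitymasses}---the paper's main result, proved in \cref{equalityofmasses}---to justify a preliminary proposition listed in \cref{currentschains} inverts the logical order of the paper. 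Fleming's own proof avoids all this by working directly with the group-valued mass, and that is what should be cited.
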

The following statement uses the structure of the coefficient group $G=\R^m$ (closed balls are compact) and can be derived from the deformation theorem \cite[Thm.\:7.3]{F66}.
\begin{prop}[{Compactness for normal $\cyan{k}$-currents with coefficients in $\R^m$ \cite[Lem.\:7.4 \&\:Cor.\:7.5]{F66}}]
\label{Compactness}
Let $(F_i)\subset\mathcal{N}_{k,K}^m(\R^n)$ with $\sup_i\mathbb{N}_h(F_i)=C<\infty$. Then we have $\mathbb{F}(F_i-F)\to 0$ up to a subsequence for some $F\in\mathcal{N}_{k,K}^m(\R^n)$ with $\mathbb{N}_h(F)\leq C$.
\end{prop}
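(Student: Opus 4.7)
The plan is to derive the statement from the deformation theorem \cite[Thm.\:7.3]{F66} via a diagonal argument, in the spirit of Fleming's original proof. The deformation theorem applied to $F_i\in\mathcal{N}_{k,K}^m(\R^n)$ with grid size $\delta>0$ produces a decomposition $F_i=P_i^\delta+\partial R_i^\delta+S_i^\delta$ with $P_i^\delta\in\mathbb{P}_{k,K_\delta}^m(\R^n)$ supported on the $k$-skeleton of a fixed cubical grid of size $\delta$, and with the uniform mass bounds
\begin{equation*}
\mathbb{M}_h(P_i^\delta)+\mathbb{M}_h(\partial P_i^\delta)\leq c\,\mathbb{N}_h(F_i),\quad \mathbb{M}_h(R_i^\delta)\leq c\delta\,\mathbb{M}_h(F_i),\quad \mathbb{M}_h(S_i^\delta)\leq c\delta\,\mathbb{M}_h(\partial F_i),
\end{equation*}
for a constant $c=c(n,h)$. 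In particular, by \cref{flatform},
\begin{equation*}
\mathbb{F}_h(F_i-P_i^\delta)\leq\mathbb{M}_h(R_i^\delta)+\mathbb{M}_h(S_i^\delta)\leq c\delta C.
\end{equation*}

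Fix a decreasing sequence $\delta_l\searrow 0$. Since $K_{\delta_l}$ is bounded, only finitely many $k$-cells of the $\delta_l$-grid meet $K_{\delta_l}$, so $\{P\in\mathbb{P}_{k,K_{\delta_l}}^m(\R^n)\mid\mathbb{M}_h(P)\leq cC\}$ lies in a bounded subset of a finite-dimensional vector space (coefficients in $\R^m$ assigned to finitely many cells). In particular, any norm on this space, including $\mathbb{F}_h$, induces the same topology, so a convergent subsequence $P_{i_j}^{\delta_l}\to P^{\delta_l}$ in $\mathbb{F}_h$ can be extracted. Performing this extraction successively in $l$ and taking a diagonal subsequence, I obtain a single subsequence, still denoted $(F_{i_j})$, such that $\mathbb{F}_h(P_{i_j}^{\delta_l}-P^{\delta_l})\to 0$ as $j\to\infty$ for every $l$.

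This subsequence is Cauchy in $\mathbb{F}_h$: for every $l$,
\begin{equation*}
\mathbb{F}_h(F_{i_j}-F_{i_{j'}})\leq 2c\delta_l C+\mathbb{F}_h(P_{i_j}^{\delta_l}-P_{i_{j'}}^{\delta_l}),
\end{equation*}
and choosing $l$ large first and then $j,j'$ large shows the right-hand side becomes arbitrarily small. By completeness of $\mathbb{F}_h$ on $\mathcal{F}_{k,K}^m(\R^n)=\mathbb{F}_{k,K}^m(\R^n)$ (cf.\:\cref{equiflat}), there is $F\in\mathbb{F}_{k,K}^m(\R^n)$ with $\mathbb{F}_h(F_{i_j}-F)\to 0$. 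Note that $F$ is supported in $K$ because $\mathrm{supp}(F_{i_j})\subset K$ for all $j$ and $K$ is closed (flat convergence in bounded sets preserves the support up to arbitrary neighborhoods, and $\bigcap_{\delta>0}K_\delta=K$).

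It remains to show $\mathbb{N}_h(F)\leq C$, which then in particular gives $F\in\mathcal{N}_{k,K}^m(\R^n)$. This is the lower semicontinuity of mass and boundary mass under flat convergence: both $\mathbb{M}_h$ and $\mathbb{M}_h\circ\partial$ are defined via relaxation with respect to $\mathbb{F}_h$ on polyhedral chains, hence are lower semicontinuous on $\mathbb{F}_k^m(\R^n)$. Applying this to the flat-convergent subsequence yields $\mathbb{M}_h(F)\leq\liminf_j\mathbb{M}_h(F_{i_j})$ and $\mathbb{M}_h(\partial F)\leq\liminf_j\mathbb{M}_h(\partial F_{i_j})$, so $\mathbb{N}_h(F)\leq C$ as required. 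The main subtlety in executing this plan is bookkeeping the constants in the deformation theorem to ensure simultaneous control of $\mathbb{M}_h(P_i^\delta)$ and $\mathbb{M}_h(\partial P_i^\delta)$ as well as the verification that flat convergence indeed preserves the support (the deformation shifts mass into $K_\delta$ only, not further).
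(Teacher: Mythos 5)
Your proof is correct and is essentially the argument the paper points to: the paper does not prove \cref{Compactness} itself but cites \cite[Lem.\:7.4 \&\:Cor.\:7.5]{F66} and remarks that the result follows from the deformation theorem together with the compactness of closed balls in the coefficient group $\R^m$, which is exactly the deformation-plus-finite-dimensional-compactness-plus-diagonalization scheme you reconstruct. The concluding lower-semicontinuity step and the support argument are also as in the standard treatment, so no gaps.
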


\subsection{Structure of normal $1$-currents with coefficients in $\R^m$}
To finish the section, we recall two important structure theorems for normal $1$-currents $F$ with coefficients in $\R^m$. In \cite[Thm.\:C]{S} it was shown that $F$ can be represented as a measure on curves (each component of $F$ is a \textit{vector charge} with finite boundary mass). More precisely, there is a decomposition $F=S+T$ (with \textit{solenoid} $S$, that is $\partial S=0$) such that $S$ can be represented by a $\R^m$-valued measure concentrated on oriented curves with prescribed length (arbitrarily chosen) and $T$ is given by a $\R^m$-valued measure on simple oriented curves of finite length. A different view on $F$ is given in \cite[Thm.\:5.5]{Sil}, which states that $F$ (more generally, every measure which is a flat $1$-chain with coefficients in $\R^m$) admits a decomposition similar to the distributional gradient of a BV function, that is $F=F_{\textup{jump}}+F_{\textup{Cantor}}+F_{\textup{cont}}$, where $F_{\textup{jump}}$ is concentrated on a countably $1$-rectifiable set, $F_{\textup{Cantor}}$ is diffuse with respect to the one-dimensional Hausdorff measure $\mathcal{H}^1$ (vanishing on Borel sets with finite $\mathcal{H}^1$-measure) and singular with respect to the Lebesgue measure $\mathcal{L}^n$, and $F_{\textup{cont}}$ is absolutely continuous with respect to $\mathcal{L}^n$.

\section{Duality in multi-material transport}
\label{dualitysec}
Let $H$ be generated by a multi-material transport cost $h:\R^m\to[0,\infty)$. In \cref{basics} we show some basic properties of $H$ and the corresponding total variation functional $|\cdot|_H$. These can also be found in \cite[Ch.\:4]{JL23}. The idea of prescribing a cost on (a class of) rank-one matrices or material vectors and then using convex biconjugation (see functions $\widetilde H$ and $\mathrm{co}(\widetilde H)$ below) also appears in \cite[Sect.\:4.2]{BOO18} (motivated by \cite{CCP12}) and \cite[Sect.\:4.2.1]{OM23}. \Cref{Equivalencesection} will be devoted to the proof of \cref{EquivalenceKR} and some auxiliary statements \purple{(including \cref{eqforopt}, which states that the minimizers of \labelcref{F} form a subset of the minimizers of \labelcref{DM})}. Finally, in \cref{cwJ}\cyan{,} we present some applications of calibrations.
\subsection{Dual representation of $H$ and $|\cdot|_H$}
\label{basics}
We begin by showing $H(\theta\otimes\vec{e})=h(\theta)$ and that $H$ is the largest such norm.
To this end\cyan{,} we introduce the \cyan{non-negative} homogeneous auxiliary function
\begin{equation*}
\cyan{\widetilde H} :\R^{m\times n}\to[0,\infty],\qquad
\cyan{\widetilde H} (M)=\begin{cases}
h(\theta)&\text{if }M=\theta\otimes\vec{e}\text{ for some }\theta\in\R^m,\vec{e}\in\mathcal{S}^{n-1},\\
\infty&\text{else}
\end{cases}
\end{equation*}
and show that $H$ coincides with its lower semicontinuous convex envelope $\cyan{\mathrm{co}(\widetilde H)}$.

\begin{proof}[Proof of \cref{propertygenerated}]
Using Legendre--Fenchel conjugation, we have
\begin{equation*}
\cyan{\widetilde H}^{*}(M)
=\sup_{\cyan{N}\in\R^{m\times n}}\cyan{M:N}-\cyan{\widetilde H}(\cyan{N})
=\sup_{\theta\in\R^m,\vec e\in\mathcal{S}^{n-1}}\theta\cdot M\vec e-h(\theta)
=\sup_{\vec e\in\mathcal{S}^{n-1}}\iota_{\partial h(0)}(M\vec e)
=\iota_{\partial H(0)}(M),
\end{equation*}
\cyan{for all $M\in\R^{m\times n}$,} which immediately implies $\cyan{\mathrm{co}(\widetilde H)}=(\cyan{\widetilde{H}}^*)^*=H$.
Hence, $H$ is the largest \cyan{lower semicontinuous and} convex function with $H(\theta\otimes\vec e)\leq h(\theta)$, and it remains to show equality.
For all $\theta\in\partial h(0)$ and $\vec{e},\vec{u}\in\mathcal{S}^{n-1}$ we observe
$(\theta\otimes \vec{e})\vec{u}=\theta(\vec{e}\cdot\vec{u})\in[-1,1]\theta\subset\partial h(0)$,
therefore $\partial h(0)\otimes\mathcal{S}^{n-1}\subset\partial H(0)$. This finally yields
\begin{equation*}
H(\theta\otimes \vec{e})=\sup_{M\in\partial H(0)}\theta\otimes \vec{e}:M\geq\sup_{\tilde{\theta}\otimes\vec{u}\in\partial h(0)\otimes\mathcal{S}^{n-1}}\theta\otimes \cyan{\vec{e}}:\tilde{\theta}\otimes\vec{u}=\sup_{\tilde{\theta}\in\partial h(0)}\theta\cdot\tilde{\theta}=h(\theta).
\qedhere
\end{equation*}
\end{proof}

That $H$ is determined by its value on rank-one matrices implies that its dual norm $H_*$ is an operator norm.

\begin{prop}[$H_*$ operator norm]
\label{Hstarop}
The dual norm $H_*$ is an operator norm with respect to $|.|$ on $\R^n$ and $h_*$ on $\R^m$, 
\begin{equation*}
H_*(M)=\sup_{\vec{e}\in\mathcal{S}^{n-1}}h_*(M\vec{e})
\qquad\textup{for all $M\in\R^{m\times n}$.}
\end{equation*}
\end{prop}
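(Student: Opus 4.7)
The strategy is to show that the two norms $H_*$ and $M\mapsto N(M):=\sup_{\vec e\in\mathcal{S}^{n-1}}h_*(M\vec e)$ have the same closed unit ball; by positive homogeneity this forces them to coincide. The key input is the identification of $\partial H(0)$ and $\partial h(0)$ with the dual unit balls.

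First I would recall the standard convex analysis fact that for a norm $f$ on a finite-dimensional Euclidean space, $f^*=\iota_{\{f_*\leq 1\}}$, and consequently $\partial f(0)=\{f^*\leq 0\}=\{f_*\leq 1\}$. Applying this to $h$ and $H$ yields $\partial h(0)=\{\theta\in\R^m\,|\,h_*(\theta)\leq 1\}$ and $\partial H(0)=\{M\in\R^{m\times n}\,|\,H_*(M)\leq 1\}$. This is already encoded in the computation performed in the proof of \cref{propertygenerated}, where $\widetilde H^*=\iota_{\partial H(0)}$ was derived.

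Next I would translate the defining condition of \cref{generated}. By the definition of the generated norm and the two identifications above,
\begin{equation*}
H_*(M)\leq 1
\iff M\in\partial H(0)
\iff MB_1(0)\subset\partial h(0)
\iff h_*(M\vec e)\leq 1\text{ for all }\vec e\in\overline{B_1(0)},
\end{equation*}
where in the last step I use that $\partial h(0)$ is closed so that the closed ball may replace the open one. The right-hand side is exactly $N(M)\leq 1$, since $h_*$ is continuous and absolutely homogeneous so the sup over $|\vec e|\leq 1$ is attained on $\mathcal{S}^{n-1}$.

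Finally I would observe that $N$ is itself a norm on $\R^{m\times n}$ (it is a supremum of norms of linear functions of $M$, hence a seminorm; definiteness follows because if $N(M)=0$ then $M\vec e=0$ for all unit $\vec e$, forcing $M=0$). Two norms on a finite-dimensional space that share the same closed unit ball are equal, which gives $H_*=N$. No serious obstacle is anticipated; the only point demanding a touch of care is the passage between the open and closed unit balls in the definition of $\partial H(0)$, which is justified by closedness of $\partial h(0)$ as mentioned in the text following \cref{generated}.
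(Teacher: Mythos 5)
Your proof is correct, but it takes a genuinely different route from the paper's. The paper compares \emph{primal} unit balls: it writes $H_*(M)=\sup_{H(N)\leq1}M:N$ and $\sup_{\vec e\in\mathcal{S}^{n-1}}h_*(M\vec e)=\sup_{\widetilde H(N)\leq1}M:N$ (the latter being the support function of the rank-one set $\{\theta\otimes\vec e\,:\,h(\theta)\leq1,\ \vec e\in\mathcal{S}^{n-1}\}$), and then shows the two suprema agree using $H=\mathrm{co}(\widetilde H)$ from the proof of \cref{propertygenerated}, homogeneity, and the elementary inequality $(a_1+\ldots+a_k)/(b_1+\ldots+b_k)\leq\max_i a_i/b_i$. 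You instead compare the \emph{dual} unit balls directly: since \cref{generated} defines $H$ precisely by prescribing $\partial H(0)$, and since $\partial f(0)=\{f_*\leq1\}$ for any norm $f$, membership of $M$ in the unit ball of $H_*$ is verbatim the condition $h_*(M\vec e)\leq1$ for all $|\vec e|\leq1$, i.e.\ $N(M)\leq 1$. Your argument is shorter and bypasses $\widetilde H$ entirely, exploiting that the generated norm was defined through its subdifferential at zero; the paper's argument instead reuses the envelope machinery already set up for \cref{propertygenerated}. The one point worth making explicit in your write-up is that the prescribed set $\partial H(0)$ is a closed, bounded, symmetric convex body with $0$ in its interior, so that it genuinely is the unit ball of a norm $H_*$ and the gauge/support-function duality you invoke applies; this follows at once from $\partial h(0)=\{h_*\leq1\}$ being such a body, and is the (implicit) hypothesis under which \cref{generated} defines a norm at all.
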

\begin{proof}
Denote the norm on the right-hand side by $\cyan{G}$, then we can write
\begin{equation*}
\cyan{G}(M)
=\sup_{\vec{e}\in\mathcal{S}^{n-1}, h(\theta)\leq1}M:\theta\otimes \vec{e}
=\sup_{\cyan{\widetilde{H}}(\cyan{N})\leq 1}M:\cyan{N}.
\end{equation*}
The statement now follows from the relation
\begin{equation*}
H_*(M)
=\sup_{H(\cyan{N})\leq1}M:\cyan{N}
=\!\!\!\sup_{\substack{k\in\mathbb{N},\cyan{N}=\cyan{N}_1+\ldots+\cyan{N}_k\\\cyan{\widetilde{H}}(\cyan{N}_1)+\ldots+\cyan{\widetilde{H}}(\cyan{N}_k)\leq1}}\!\!\!M:\cyan{N}
=\!\!\sup_{k\in\mathbb{N},\cyan{N}_1+\ldots+\cyan{N}_k}\tfrac{\cyan{N}_1:M+\ldots+\cyan{N}_k:M}{\cyan{\widetilde{H}}(\cyan{N}_1)+\ldots+\cyan{\widetilde{H}}(\cyan{N}_k)}
=\sup_{\cyan{N}}\tfrac{\cyan{N}:M}{\cyan{\widetilde{H}}(\cyan{N})}
=\sup_{\cyan{\widetilde{H}}(\cyan{N})\leq1}M:\cyan{N},
\end{equation*}
where the second equality follows from $H=\cyan{\mathrm{co}(\widetilde H)}$ and the \cyan{non-negative} homogeneity of $H$ and $\cyan{\widetilde{H}}$,
the third and fifth equality again use the \cyan{non-negative} homogeneity,
and the fourth equality follows from the elementary inequality $(a_1+\ldots+a_k)/(b_1+\ldots+b_k)\leq\max\{a_1/b_1,\ldots,a_k/b_k\}$.
\end{proof}

The following conjugation lemma states the dual representation of the total variation measure with respect to $H$ (it is also applicable to non-generated norms). 
\begin{lem}[$|.|_H$ as convex conjugate]
	\label{glem}
	Let $K\subset\R^n$ compact and define $\iota_{\partial H(0),K}:C(K;\R^{m\times n})\to\{ 0,\infty\}$,
\begin{equation*}
	\iota_{\partial H(0),K}(\Phi )=\begin{cases*}0&if $\Phi(x)\in\partial H(0)$ for all $x\in K$,\\\infty&else.\end{cases*}
\end{equation*}
	Then we have $|F|_H=\iota_{\partial H(0),K}^*(F)$ for all $F\in \mathcal{M}_{1}^m(\R^n)$ with $\textup{supp}(F)\subset K$.
\end{lem}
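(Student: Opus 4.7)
The plan is to prove the identity by establishing both inequalities. Unfolding the definition of the convex conjugate with the natural pairing $\langle F,\Phi\rangle=\int_K\Phi:\mathrm{d}F$, we must show
\[
\int_K H\!\left(\tfrac{\mathrm{d}F}{\mathrm{d}|F|}\right)\mathrm{d}|F|
=\sup\!\left\{\int_K\Phi:\mathrm{d}F\,\middle|\,\Phi\in C(K;\R^{m\times n}),\ \Phi(x)\in\partial H(0)\ \forall x\in K\right\}.
\]
Writing $\Xi=\mathrm{d}F/\mathrm{d}|F|$, the key ingredient is that $\partial H(0)$ is the closed unit ball of the dual norm $H_*$, so pointwise we have the dual representation $H(M)=\sup_{N\in\partial H(0)}N:M$.

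For the upper bound ``$\geq$'', fix any admissible $\Phi$. Pointwise on $K$, $\Phi(x):\Xi(x)\leq H_*(\Phi(x))\,H(\Xi(x))\leq H(\Xi(x))$ because $\Phi(x)\in\partial H(0)$. Integrating against $|F|$ and using the integral representation $|F|_H=\int H(\Xi)\,\mathrm{d}|F|$ recorded in \cref{basicnot} yields $\langle F,\Phi\rangle\leq|F|_H$, hence $\iota_{\partial H(0),K}^{*}(F)\leq|F|_H$.

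For the lower bound ``$\leq$'', the task is to produce $\Phi$ that nearly achieves equality pointwise. Since $\partial H(0)$ is convex and compact, the set-valued map $x\mapsto\arg\max_{N\in\partial H(0)}N:\Xi(x)$ is nonempty, compact-valued, and has measurable graph. A standard measurable selection argument then delivers a bounded $|F|$-measurable $\Psi:K\to\partial H(0)$ with $\Psi(x):\Xi(x)=H(\Xi(x))$ for $|F|$-a.e.\ $x$. Next, by Lusin's theorem together with the Tietze extension theorem, choose continuous $\widetilde\Phi_j:K\to\R^{m\times n}$ that converge $|F|$-a.e.\ to $\Psi$ and are uniformly bounded. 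Composing with the continuous nearest-point projection onto the convex compact set $\partial H(0)$ produces continuous $\Phi_j:K\to\partial H(0)$ that still converge $|F|$-a.e.\ to $\Psi$ and remain uniformly bounded. Dominated convergence (using $|F|(K)<\infty$, which follows from $F\in\mathcal{M}_1^m(\R^n)$ with compact support in $K$) gives
\[
\int_K\Phi_j:\mathrm{d}F=\int_K\Phi_j:\Xi\,\mathrm{d}|F|\ \longrightarrow\ \int_K\Psi:\Xi\,\mathrm{d}|F|=\int_K H(\Xi)\,\mathrm{d}|F|=|F|_H,
\]
which shows $\iota_{\partial H(0),K}^{*}(F)\geq|F|_H$.

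The only delicate point is reconciling the continuity requirement imposed on $\Phi$ with the fact that the pointwise optimizer $\Psi$ is merely $|F|$-measurable. This is where the density/projection step is essential; once it is in place, both inequalities are elementary consequences of the duality $H=(H_*)_*$ and the Radon--Nikodym representation of $|F|_H$ recalled in \cref{basicnot}.
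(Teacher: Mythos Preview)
Your proof is correct. The paper does not give a self-contained argument here; it simply invokes \cite[Lem.~2.9]{CPSV18} (alternatively \cite{V}), which is a general duality result for integral functionals of the form $F\mapsto\int f\bigl(\tfrac{\mathrm dF}{\mathrm d|F|}\bigr)\,\mathrm d|F|$ on compact metric spaces. Your argument is essentially what underlies that cited lemma in the special case $f=H$: the upper bound is the trivial direction of the duality $H=(H_*)_*$, and for the lower bound you produce near-optimal continuous test fields via measurable selection, Lusin/Tietze, and metric projection onto the convex body $\partial H(0)$. The projection step is the only place where some care is needed, and you handle it correctly (it preserves continuity, forces values into $\partial H(0)$, and does not destroy the a.e.\ convergence since $\Psi$ already lands in $\partial H(0)$). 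So your route is more explicit and self-contained than the paper's citation, at the cost of a little extra length; the paper's route is shorter but offloads the work to the literature.
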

\begin{proof}
Apply \cite[Lem.\:2.9]{CPSV18} with compact metric space $(X=K,|.|)$ and convex, non-negative homogeneous, and finite function $f=H$. Note that $\textup{dom}(H^*)=\partial H(0)$ and $f$ is independent of $x\in K$. Alternatively, one can use \cite{V}.
\end{proof}

\subsection{Equivalence of problems \labelcref{DM,F}, generalized Kantorovich\textendash Rubinstein formula}
\label{Equivalencesection}
In this section, we apply Fenchel\textendash Rockafellar duality to the problems \labelcref{DM,F}. The application to \labelcref{DM} can also be found in \cite[Thm.\:1.5.3.3]{JL23}. We will see that the dualization of \labelcref{F} is not straightforward and requires some preliminaries. First, we will observe that in problems \labelcref{DM,F} minimizers are attained. \cyan{\red{The proof requires} the following projection lemma, which follows from \cite{FF60,F66}}.
\begin{lem}[Projection of flat $k$-chains onto convex sets]
\label{projlem}
\cyan{Let $K\subset\R^n$ compact and convex. Write $p:\R^n\to K$ for the orthogonal projection onto $K$. Then $p$ induces a natural projection (for every $k\in\mathbb{N}_0$) $p_\#:\mathcal{F}_k^m(\R^n)\to\mathcal{F}_{k,K}^m(\R^n)$ satisfying $\mathbb{M}_h\circ p_\#\leq\mathbb{M}_h$. Moreover, if $F\in\mathcal{F}_k^m(\R^n)$ with $\textup{supp}(\partial F)\subset K$, then $\partial (p_\# (F))=\partial F$. In the case $k=1$, we also have $|\cdot|_H\circ p_\#\leq|\cdot|_H$ on $\mathcal{N}_1^m(\R^n)$.}
\end{lem}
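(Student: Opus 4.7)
My plan is to define $p_\#$ by the classical pushforward of polyhedral chains under the $1$-Lipschitz map $p$, extend by continuity in the flat norm, and then verify the three conclusions in turn. The support statement, the $\mathbb{M}_h$-bound, and the boundary identity are routine consequences of the fact that $p$ is non-expansive onto the convex set $K$ and agrees with the identity on $K$. The main obstacle is the final bound $|\cdot|_H\circ p_\#\leq|\cdot|_H$ on $\mathcal{N}_1^m(\R^n)$, because the dual description of $|\cdot|_H$ in \cref{glem} uses continuous test functions but $p$ is only Lipschitz, so its weak Jacobian need not be defined $|F|$-a.e.\ for general $F\in\mathcal{N}_1^m(\R^n)$; I would circumvent this by mollifying $p$.

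To construct $p_\#$, for polyhedral $P=\sum_j\theta_j\otimes\vec s_j\mathcal H^k\mres s_j$ I would take the classical pushforward of $P$ as a flat $k$-chain under the Lipschitz map $p$ (well-defined on each simplex via Rademacher); it lies in $\mathcal{F}_{k,K}^m(\R^n)$ since $p(\R^n)=K$, and non-expansiveness of $p$ gives $\mathbb{M}_h(p_\# P)\leq\mathrm{Lip}(p)^k\mathbb{M}_h(P)=\mathbb{M}_h(P)$ together with $\partial\circ p_\#=p_\#\circ\partial$. Via \cref{flatform} the mass bound implies flat-continuity $\mathbb{F}_h(p_\# P-p_\# Q)\leq \mathbb{F}_h(P-Q)$, so $p_\#$ extends uniquely to a continuous operator $\mathcal{F}_k^m(\R^n)\to \mathcal{F}_{k,K}^m(\R^n)$ preserving these identities. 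The global bound $\mathbb{M}_h(p_\# F)\leq \mathbb{M}_h(F)$ then follows from a polyhedral recovery sequence (from the definition of $\mathbb{M}_h$) combined with flat-continuity of $p_\#$ and lower semicontinuity of $\mathbb{M}_h$. For the boundary statement, if $\textup{supp}(\partial F)\subset K$, then $p$ equals the identity on $\textup{supp}(\partial F)$, so $p_\#(\partial F)=\partial F$ and hence $\partial(p_\# F)=\partial F$.

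For the $|\cdot|_H$-bound, set $p_\varepsilon:=p*\eta_\varepsilon$. Convolution preserves the Lipschitz constant, and since $K$ is convex $p_\varepsilon(x)$ is always a convex combination of points in $K$, so $p_\varepsilon$ is smooth, $1$-Lipschitz, and satisfies $p_\varepsilon(\R^n)\subset K$. For any $\Phi\in C(K;\R^{m\times n})$ with $\Phi(x)\in\partial H(0)$ pointwise, the smooth map $\Psi_\varepsilon(x):=\Phi(p_\varepsilon(x))\J p_\varepsilon(x)$ satisfies $H_*(\Psi_\varepsilon(x))\leq H_*(\Phi(p_\varepsilon(x)))\leq 1$ by \cref{Hstarop} and $|\J p_\varepsilon(x)\vec e|\leq 1$ for unit $\vec e$, hence $\Psi_\varepsilon(x)\in\partial H(0)$. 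Applying \cref{glem} once to $p_{\varepsilon\#}F$ with test $\Phi$ and once to $F$ with test $\Psi_\varepsilon$, together with the classical pullback identity $\langle\Phi,p_{\varepsilon\#}F\rangle=\langle\Psi_\varepsilon,F\rangle$ (valid since $p_\varepsilon$ is smooth), yields $|p_{\varepsilon\#}F|_H\leq|F|_H$ after the supremum over $\Phi$. To pass to the limit, the standard affine homotopy formula for pushforwards under Lipschitz maps gives $\mathbb{F}_h(p_{\varepsilon\#}F-p_\# F)\leq C|p_\varepsilon-p|_\infty\mathbb{N}_h(F)\to 0$; with uniformly bounded masses $\mathbb{M}_h(p_{\varepsilon\#}F)\leq \mathbb{M}_h(F)$, flat convergence upgrades to weak-$*$ convergence of $\R^{m\times n}$-valued Radon measures, and the weak-$*$ lower semicontinuity of $|\cdot|_H$ (inherited from \cref{glem}) completes the proof via $|p_\# F|_H\leq\liminf_\varepsilon |p_{\varepsilon\#}F|_H\leq|F|_H$.
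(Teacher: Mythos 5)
Your proposal is correct and follows essentially the same route as the paper: pushforward of polyhedral chains under the $1$-Lipschitz map $p$ with Fleming's mass and flat-norm estimates, commutation of $p_\#$ with $\partial$ plus the fact that $p$ is the identity on $K$, and, for the $|\cdot|_H$-bound, mollification $p_\varepsilon$ of $p$ combined with the observation that $\Phi(p_\varepsilon)\J p_\varepsilon$ stays in $\partial H(0)$ so that \cref{glem} applies. The only (immaterial) difference is that you fix $\varepsilon$, prove $|(p_\varepsilon)_\#F|_H\le|F|_H$, and then pass to the limit by lower semicontinuity, whereas the paper exchanges the supremum over $\Phi$ with the limit in $\varepsilon$ inside one chain of identities.
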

\begin{proof}
\cyan{Let $F\in\mathcal{F}_k^m(\R^n)$ and $(P_i)\subset\mathbb{P}_k^m(\R^n)$ with $\mathbb{F}(P_i-F)\to 0$ and $\mathbb{M}_h(P_i)\to\mathbb{M}_h(F)$. By \cite[(5.2)]{F66} and the explanation below on \cite[p.\:168]{F66} we get $\mathbb{F}(p_\#(P_i)-p_\#(F))\to 0$. Moreover, we have $\mathbb{M}_h(p_\#(P_i))\leq\mathbb{M}_h(P_i)$ by \cite[(5.1)]{F66} using that $p$ is $1$-Lipschitz. Therefore,
\begin{equation*}
\mathbb{M}_h(p_\#(F))\leq\liminf_i\:\mathbb{M}_h(p_\#(P_i))\leq\liminf_i\:\mathbb{M}_h(P_i)=\mathbb{M}_h(F).
\end{equation*}
To prove the second statement, we assume that $\textup{supp}(\partial F)\subset K$. By the formula below \cite[(5.1)]{F66} we obtain $\partial (p_\# (P_i))=p_\#(\partial P_i)$. Using this, the continuity properties of $\partial$ and $p_\#$ (see above), and $p_\#(\partial F)=\partial F$ ($p$ restricted to $K$ is the identity) yields
\begin{equation*}
\partial (p_\#(F))=\lim_i\partial (p_\#(P_i))=\lim_ip_\#(\partial P_i)=p_\#(\partial F)=\partial F.
\end{equation*}
It remains to show $|p_\#(F)|_H\leq|F|_H$ for all $F\in \mathcal{N}_1^m(\R^n)$. To this end, we use the definition of $p_\#$ given in \cite[Def.\:3.5]{FF60} (it uses smooth approximations of $p$ and the definition in \cite[Sect.\:2.5]{FF60}), which is also applied in \cite{F66}. Using in this order \cref{glem} (by density we can actually assume $\Phi\in C^\infty(K;\R^{m\times n})$ with $\Phi\in\partial H(0)$ in $K$), mollifications $p_\varepsilon$ of $p$ (which converge uniformly to $p$ on compact sets and whose Lipschitz constant is also bounded by one) and \cite[Def.\:3.5]{FF60}, definition of the pushforward $(p_\varepsilon)_\#(F)$ \cite[Sect.\:2.5]{FF60}, and $\Phi\mathrm{D}p_\varepsilon\in\partial H(0)$ (\cref{generated}) plus \cref{glem}, we obtain
\begin{equation*}
|p_\#(F)|_H=\sup_{\Phi}\int_K\Phi:\mathrm{d}p_\#(F)=\sup_\Phi\lim_{\varepsilon}\int_K\Phi:\mathrm{d}(p_\varepsilon)_\#(F)=\sup_\Phi\lim_{\varepsilon}\int\Phi (p_\varepsilon)\mathrm{D}p_\varepsilon:\mathrm{d}F\leq|F|_H.\qedhere
\end{equation*}
}
\end{proof}
\begin{prop}[Well-posedness of \labelcref{DM,F}]
\label{exmin}
\labelcref{DM,F} admit minimizers.
\end{prop}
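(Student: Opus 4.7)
The plan is the direct method of the calculus of variations, handled in parallel for \labelcref{DM} and \labelcref{F}. By \cref{boundaryprop} the feasible set $\{F\in\mathcal{N}_1^m(\R^n):\partial F=F_0\}$ is nonempty, and both functionals are bounded below by zero, so for each problem I fix a minimizing sequence $(F_i)$.

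First I would pass to a uniformly compactly supported minimizing sequence: choose a compact convex set $K\subset\R^n$ containing $\textup{supp}(F_0)$, and replace $F_i$ by $p_\#(F_i)$, where $p$ is the orthogonal projection onto $K$. \Cref{projlem} ensures $p_\#(F_i)\in\mathcal{N}_{1,K}^m(\R^n)$, $\partial p_\#(F_i)=F_0$ (since $\textup{supp}(\partial F_i)=\textup{supp}(F_0)\subset K$), and that neither $\mathbb{M}_h$ nor $|\cdot|_H$ increases. Next I would extract a convergent subsequence: by equivalence of all norms on $\R^m$ and $\R^{m\times n}$, boundedness of $\mathbb{M}_h(F_i)$ or $|F_i|_H$ together with $\mathbb{M}_h(\partial F_i)=\mathbb{M}_h(F_0)<\infty$ (and \cref{massequiv}) yields $\sup_i\mathbb{N}_h(F_i)<\infty$, and \cref{Compactness} then gives flat convergence $\mathbb{F}(F_i-F)\to 0$ along a subsequence (not relabeled) for some $F\in\mathcal{N}_{1,K}^m(\R^n)$.

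Finally I would pass to the limit: continuity of $\partial$ with respect to the flat seminorm (\cref{boundflat}) gives $\partial F=F_0$. For \labelcref{F}, lower semicontinuity of $\mathbb{M}_h$ under flat convergence follows directly from its definition as a relaxation (via a standard diagonal argument on polyhedral approximants of each $F_i$), yielding $\mathbb{M}_h(F)\le\liminf_i\mathbb{M}_h(F_i)$ and hence optimality of $F$. For \labelcref{DM} the delicate step—and the main obstacle I anticipate—is lower semicontinuity of $|\cdot|_H$ under flat convergence. Here I would upgrade flat convergence to weak-$*$ convergence $F_i\ws F$ of the associated Radon measures, using the uniform mass bound together with the observation that smooth compactly supported maps have finite flat seminorm and are dense in $C(K;\R^{m\times n})$, and then invoke the dual representation $|\cdot|_H=\iota_{\partial H(0),K}^*$ from \cref{glem} to conclude weak-$*$ lower semicontinuity, giving $|F|_H\le\liminf_i|F_i|_H$ and thus optimality of $F$.
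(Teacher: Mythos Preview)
Your proposal is correct and follows essentially the same direct-method approach as the paper: projection onto a compact convex $K$ via \cref{projlem}, a compactness step, and lower semicontinuity of the respective functionals together with continuity of $\partial$. The only cosmetic difference is that for \labelcref{DM} the paper invokes Banach--Alaoglu directly to obtain weak-$*$ convergence (and checks $\partial F=F_0$ from that), whereas you first obtain flat convergence via \cref{Compactness} and then upgrade to weak-$*$ using the uniform mass bound; both routes lead to the same application of \cref{glem}.
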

\begin{proof}
By \cyan{\cref{boundaryprop}} there is some $F_1\in\mathcal{N}_1^m(\R^n)$ with $\partial F_1=F_0$. Let $K\subset\R^n$ compact and convex with $\textup{supp}(F_1)\subset K$. \cyan{Choose} minimizing sequences $(F_i),(G_i)\subset\mathcal{N}_{1}^m(\R^n)$ with $\partial F_i=\partial G_i=F_0$ and
\begin{equation*}
|F_i|_H\to\inf_{ \partial F=F_0}\:|F|_H\qquad\text{and}\qquad\mathbb{M}_h(G_i)\to\inf_{ \partial G=F_0}\:\mathbb{M}_h(G). 
\end{equation*}
\cyan{Without loss of generality, we can assume $(F_i),(G_i)\subset\mathcal{N}_{1,K}^m(\R^n)$ by \cref{projlem}.}
\todo[inline,disable]{before applying Banach--Alaoglu we need to state/show (probably in an extra lemma?) that one may restrict the support along the minimizing sequence to $K$ (e.g. by projection onto $K$)}
Using the Banach\textendash Alaoglu theorem and \cref{Compactness} we have $F_j\ws F$ and $\mathbb{F}(G_j-G)\to 0$ (for some $F\in\mathcal{M}_1^m(\R^n)$ and $G\in\mathcal{F}_1^m(\R^n)$) up to a subsequence, again indexed by $j$. By \cref{glem} and weak-$*$ lower semicontinuity of convex conjugates we get $|F|_H\leq \liminf_j|F_j|_H$. Further, we obtain $\partial F=F_0$ using $F_j\ws F$. Also recall that $\mathbb{M}_h$ is lower semicontinuous (it was defined as a lower semicontinuous envelope) and $\partial$ is continuous with respect to \cyan{flat convergence inside some bounded domain} (\cyan{cf.\:\cref{boundflat,polysemi}}\todo[inline,disable]{this is not the right reference; instead, we still have to define $\partial$ in the notation section as the continuous extension using Hahn--Banach}). Thus, we have $\mathbb{M}_h(G)\leq\liminf_j\mathbb{M}_h(G_j)$ and $\partial G=F_0$. This shows that $F$ is a minimizer of \labelcref{DM} and $G$ of \labelcref{F}.
\end{proof}
Since it will be useful later, we state the weak duality for \labelcref{DM} separately.
\begin{prop}[Weak duality]
\label{Wduality}
Let $(F,\phi)$ as in \cref{EquivalenceKR}. Then $\int\phi\cdot\mathrm{d}F_0\leq |F|_H$.
\end{prop}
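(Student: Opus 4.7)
The plan is to prove the inequality first for smooth test functions via a direct integration-by-parts argument, then extend to Lipschitz $\phi$ by a standard mollification. The main input is the elementary dual-norm estimate together with the characterization $\partial H(0)=\{M\in\R^{m\times n}\mid H_*(M)\leq 1\}$ of the subdifferential at zero.

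\textbf{Smooth case.} Assume first $\phi\in C^\infty(\R^n;\R^m)$. Identifying $F\in\mathcal{N}_1^m(\R^n)\subset\mathcal{M}_1^m(\R^n)$ with an $\R^{m\times n}$-valued Radon measure with compact support (\cref{currentmeasure}) and using the definition of $\partial$, one gets
\begin{equation*}
\int\phi\cdot\mathrm{d}F_0
=\partial F(\phi)
=F(\J\phi)
=\int \J\phi:\mathrm{d}F
=\int\J\phi:\frac{\e F}{\e |F|}\,\mathrm{d}|F|.
\end{equation*}
Since $\J\phi(x)\in\partial H(0)$ for a.e.\ $x$, we have $H_*(\J\phi(x))\leq 1$ a.e., and the pointwise dual-norm inequality $M:N\leq H_*(M)\,H(N)$ gives
\begin{equation*}
\int\J\phi:\frac{\e F}{\e |F|}\,\mathrm{d}|F|\leq\int H\!\left(\frac{\e F}{\e |F|}\right)\mathrm{d}|F|=|F|_H.
\end{equation*}

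\textbf{Lipschitz case.} For general $\phi\in C^{0,1}(\R^n;\R^m)$, mollify $\phi_\varepsilon=\phi\ast\eta_\varepsilon$. Then $\phi_\varepsilon\in C^\infty(\R^n;\R^m)$ with $\J\phi_\varepsilon=(\J\phi)\ast\eta_\varepsilon$, which at every $x$ is an $\eta_\varepsilon$-average of values $\J\phi(y)\in\partial H(0)$. As $\partial H(0)$ is convex and closed, $\J\phi_\varepsilon(x)\in\partial H(0)$ everywhere, so the smooth case applies and yields
\begin{equation*}
\int\phi_\varepsilon\cdot\mathrm{d}F_0\leq|F|_H\qquad\text{for every }\varepsilon>0.
\end{equation*}
Since $\phi$ is continuous and $F_0$ is compactly supported, $\phi_\varepsilon\rightrightarrows\phi$ on $\mathrm{supp}(F_0)$, whence $\int\phi_\varepsilon\cdot\mathrm{d}F_0\to\int\phi\cdot\mathrm{d}F_0$. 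Passing to the limit yields the claim.

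\textbf{Expected obstacle.} The proof is essentially routine; the only point requiring care is that the mollification preserves the feasibility constraint $\J\phi\in\partial H(0)$ a.e. This in turn reduces to convexity of $\partial H(0)$, a property built into the definition via \cref{generated}. Everything else is standard Fenchel-type estimation plus a uniform-convergence limit.
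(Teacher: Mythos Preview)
Your proof is correct and follows essentially the same approach as the paper: mollify $\phi$, use convexity of $\partial H(0)$ to ensure $\J\phi_\varepsilon\in\partial H(0)$ everywhere, integrate by parts against $F$, apply the dual-norm estimate pointwise, and pass to the limit using uniform convergence on the compact support of $F_0$. The paper merely compresses your two steps into a single chain of (in)equalities.
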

\begin{proof}
Let $K\subset\R^n$ compact containing $\textup{supp}(F_0)$. Consider an $\varepsilon$-mollification $\phi_\varepsilon\in C^\infty (\R^n;\R^m)$ of $\phi$ (recall \cref{basicnot}). We obtain
\begin{equation*}
\int\phi\cdot\mathrm{d}F_0=\lim_\varepsilon\int\phi_\varepsilon\cdot\mathrm{d}F_0=\lim_\varepsilon\int \J\phi_\varepsilon:\mathrm{d}F
=\lim_\varepsilon\int \J\phi_\varepsilon:\frac{\e F}{\e |F|}\,\mathrm{d}|F|
\leq\int H\left(\frac{\e F}{\e |F|}\right)\,\mathrm{d}|F|
=|F|_H
\end{equation*}
using $\phi_\varepsilon\rightrightarrows\phi$ on $K\supset \textup{supp}(F_0)$, $F_0=\partial F$, and $\J\phi_\varepsilon\in\partial H(0)$ by the convexity of $\partial H(0)$.
\end{proof}
Next, we determine the dual space of $(\mathcal{F}_{0,K}^m(\R^n),|\cdot|_K^\flat )$ (\cref{dualflatzero}). To this end, we need the Kantorovich\textendash Rubinstein norm. We use the definition from \cite[Sec.\:2]{H92}. 
\begin{defin}[Kantorovich\textendash Rubinstein norm]
\label{KRnorm}
Let $K\subset\R^n$ be compact. The \textbf{Kantorovich\textendash Rubinstein norm} of $\mu\in\mathcal{N}_{0,K}^1(\R^n)$ is defined by
\begin{equation*}
|\mu|_{\mathrm{KR}}=\inf\left\{  |\mu_0|_{\mathrm{W1}}+|\mu-\mu_0|(K)\:\middle|\:\mu_0\in\mathcal{N}_{0,K}^1(\R^n),\mu_0(K)=0\right\},
\end{equation*}
where $|\mu_0|_{\mathrm{W1}}$ is the classical Kantorovich\textendash Rubinstein norm (the \cyan{Wasserstein-$1$ distance} between the positive and negative part of $\mu_0$),
\begin{equation*}
|\mu_0|_{\mathrm{W1}}=\inf\left\{  \int_{K\times K}|x-y|\,\mathrm{d}\pi(x,y)\:\middle|\:\pi\in\Pi(\mu_0)\right\}.
\end{equation*}
Here $\Pi(\mu_0)$ denotes the set of admissible \textbf{transport plans}, that is measures $\pi\in\mathcal{N}_{0,K\times K}^1(\R^n\times\R^n )$ with $\pi(K,B)-\pi(B,K)=\mu_0(B)$ for all $B\in\mathcal{B}(K\times K)$.
\end{defin}
We will see that the flat seminorm $|\cdot|_K^\flat$ coincides with the Kantorovich\textendash Rubinstein norm on the space of signed Radon measures with support in compact and convex $K\subset\R^n$. As a consequence, we can apply \cite[Thm.\:0]{H92} and obtain the following statement.
\begin{lem}[Dual of flat $0$-chains with coefficients in $\R^m$]
\label{dualflatzero}
Let $K\subset\R^n$ compact and convex. Then we have
\begin{equation*}
 (\mathcal{F}_{0,K}^m(\R^n),|\cdot|_K^\flat )^*=C^{0,1}(K;\R^m).
\end{equation*}
\end{lem}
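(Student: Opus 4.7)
The plan is to reduce to the case $m=1$ and invoke Hanin's theorem \cite[Thm.~0]{H92} after identifying the flat seminorm $|\cdot|_K^\flat$ with the Kantorovich\textendash Rubinstein norm $|\cdot|_{\mathrm{KR}}$ on $\mathcal{N}_{0,K}^1(\R^n)$. By \cref{equiflat} and the norm equivalence \labelcref{flatnormequi}, $\mathcal{F}_{0,K}^m(\R^n)=(\mathcal{F}_{0,K}^1(\R^n))^m$ with $|\cdot|_{h,K}^\flat$ equivalent to the sum of the componentwise flat seminorms, so its topological dual is canonically isomorphic to the $m$-fold product of $(\mathcal{F}_{0,K}^1(\R^n),|\cdot|_K^\flat)^*$. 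Hence it suffices to prove the assertion for $m=1$.

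For the identification on $\mathcal{N}_{0,K}^1(\R^n)$, I would apply \cref{flatform} to obtain $|\mu|_K^\flat=\inf\{\mathcal{M}(G)+\mathcal{M}(\mu-\partial G):G\in\mathcal{F}_{1,K}^1(\R^n)\}$ and note that candidates with infinite mass or infinite boundary mass do not contribute, so one may restrict to $G\in\mathcal{N}_{1,K}^1(\R^n)$. Setting $\mu_0:=\partial G$, one checks by polyhedral approximation (\cref{simax}) that $\mu_0\in\mathcal{N}_{0,K}^1(\R^n)$ satisfies $\mu_0(K)=0$, while $\mathcal{M}(\mu-\mu_0)=|\mu-\mu_0|(K)$ by \cref{currentmeasure}. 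Conversely, given any $\mu_0\in\mathcal{N}_{0,K}^1(\R^n)$ with $\mu_0(K)=0$, the Beckmann representation of the Wasserstein-$1$ distance on the convex set $K$ yields some $G\in\mathcal{N}_{1,K}^1(\R^n)$ with $\partial G=\mu_0$ and $\mathcal{M}(G)=|\mu_0|_{\mathrm{W1}}$, whereas $\mathcal{M}(G)\geq|\partial G|_{\mathrm{W1}}$ holds for all admissible $G$ by the standard duality with Lipschitz functions of slope at most one. Splitting the infimum into an inner one over $G$ (for fixed $\mu_0$) and an outer one over $\mu_0$ then reproduces \cref{KRnorm} exactly, giving $|\mu|_K^\flat=|\mu|_{\mathrm{KR}}$.

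Since $\mathcal{F}_{0,K}^1(\R^n)$ is by definition the closure of $\mathcal{N}_{0,K}^1(\R^n)$ under $|\cdot|_K^\flat$, and this norm coincides with $|\cdot|_{\mathrm{KR}}$ on $\mathcal{N}_{0,K}^1(\R^n)$, the space $(\mathcal{F}_{0,K}^1(\R^n),|\cdot|_K^\flat)$ is isometric to the abstract completion of $(\mathcal{N}_{0,K}^1(\R^n),|\cdot|_{\mathrm{KR}})$. Because topological duals are invariant under completion (continuous linear functionals extend uniquely by density), Hanin's theorem \cite[Thm.~0]{H92} identifies this dual with $C^{0,1}(K)$ via $\phi\mapsto(\mu\mapsto\int_K\phi\,\mathrm{d}\mu)$; combined with the reduction to $m=1$ above, we conclude $(\mathcal{F}_{0,K}^m(\R^n),|\cdot|_{h,K}^\flat)^*=C^{0,1}(K;\R^m)$. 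The main obstacle is the step $|\cdot|_K^\flat=|\cdot|_{\mathrm{KR}}$: it relies on the nontrivial Beckmann\textendash Wasserstein equivalence on convex domains and requires care in verifying that, as $G$ ranges over $\mathcal{N}_{1,K}^1(\R^n)$, the boundary $\partial G$ sweeps out precisely the zero-mean signed Radon measures supported in $K$.
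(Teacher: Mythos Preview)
Your proposal is correct and follows essentially the same approach as the paper: both identify $|\cdot|_K^\flat$ with $|\cdot|_{\mathrm{KR}}$ on $\mathcal{N}_{0,K}^1(\R^n)$ via the Beckmann formulation of $|\cdot|_{\mathrm{W1}}$ combined with \cref{flatform}, then invoke Hanin's theorem and pass to the completion, handling $m>1$ componentwise. The only cosmetic differences are that the paper derives the equality in the reverse direction (starting from \cref{KRnorm} and substituting the Beckmann formula to arrive at \cref{flatform}) and performs the reduction to $m=1$ at the end rather than the beginning.
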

\begin{proof}
Let $\mu$ and $\mu_0$ as in \cref{KRnorm}. It is well-known that $|\mu_0|_{\mathrm{W1}}$ admits a Beckmann formulation (it can be obtained by twofold application of Fenchel\textendash Rockafellar duality \cyan{\red{and is a special case of} the Beckmann formulation of the classical Wasserstein-$1$ distance \cite[Thm.\:4.6]{San}}),
\begin{equation*}
|\mu_0|_{\mathrm{W1}}=\inf_\mathcal{F}|\mathcal{F}|(K),
\end{equation*}
where \red{the} infimum is over $\mathcal{F}\in\mathcal{N}_{1,K}^1(\R^n)$ with $\partial\mathcal{F}=-\mu_0$. Hence, we get
\begin{equation*}
|\mu|_{\mathrm{KR}}=\inf_{\mu_0,\mathcal{F}}|\mathcal{F}|(K)+|\mu+\partial\mathcal{F}|(K),
\end{equation*}
where \red{the} infimum is over $\mu_0$ and $\mathcal{F}$ as above. Since always $(\partial\mathcal{F})(K)=0$, we can drop the optimization over $\mu_0$,
\begin{equation*}
|\mu|_{\mathrm{KR}}=\inf_{\mathcal{F}}|\mathcal{F}|(K)+|\mu+\partial\mathcal{F}|(K),
\end{equation*}
where $\mathcal{F}\in\mathcal{N}_{1,K}^1(\R^n)$. We have $|\mathcal{F}|(K)=\mathcal{M}(\mathcal{F})$ and $|\mu+\partial\mathcal{F}|(K)=\mathcal{M}(\mu+\partial\mathcal{F})$ by \cref{currentmeasure}, thus (recall \cref{flatform})\todo[inline,disable]{\cref{flatform} needs to be reformulated s.t. $G$ is the minimizer}
\begin{equation*}
|\mu|_{\mathrm{KR}}=\inf_{\mathcal{F}}\:\mathcal{M}(\mathcal{F})+\mathcal{M}(\mu-\partial\mathcal{F})=|\mu|_K^\flat.
\end{equation*}
Using this and \cite[Thm.\:0]{H92} we obtain $(\mathcal{N}_{0,K}^1(\R^n),|.|_K^\flat )^*=C^{0,1}(K)$. Finally, as $\mathcal{F}_{0,K}^m(\R^n)$ is the completion of $\mathcal{N}_{0,K}^m(\R^n)$, we have (invoking the Hahn\textendash Banach theorem)
\begin{equation*}
\left(\mathcal{F}_{0,K}^m(\R^n)\right)^*=\left(\mathcal{N}_{0,K}^m(\R^n)\right)^*=\left(\mathcal{N}_{0,K}^1(\R^n)^m\right)^*=\left(\mathcal{N}_{0,K}^1(\R^n)^*\right)^m=C^{0,1}(K;\R^m).\qedhere
\end{equation*}
\end{proof}
Next, we rewrite the dual constraint which we will obtain when applying Fenchel\textendash Rockafellar duality to \labelcref{F}.
\begin{lem}[Dual constraint]
	\label{dualconstraint}
	Let $K\subset\R^n$ compact and convex. Then for every $\phi\in C^{0,1}(K;\R^m)$ the constraint (recall \cref{dualflatzero})
	\begin{equation*}
	\langle F,\partial^\dagger\phi\rangle\leq\mathbb{M}_h(F)\qquad\textup{for all}\qquad F\in \mathcal{F}_{1,K}^m(\R^n)
	\end{equation*}
	is equivalent to $\J\phi\in\partial H(0)$ a.e.\:in $K$.
\end{lem}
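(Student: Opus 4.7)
The plan is to recast the pointwise condition $\J\phi\in\partial H(0)$ a.e.\ in $K$ as a single Lipschitz-type estimate for $\phi$, after which both directions of the equivalence become transparent. Using \cref{Hstarop} one has $\partial H(0)=\{M\in\R^{m\times n}:h_*(M\vec e)\leq 1\text{ for every }\vec e\in\mathcal{S}^{n-1}\}$, and combining this with Rademacher's theorem and the absolute continuity of Lipschitz maps along line segments (applicable since $K$ is convex) gives the equivalent reformulation
\begin{equation*}
h_*(\phi(y)-\phi(x))\leq|y-x|\qquad\text{for all }x,y\in K.
\end{equation*}
One direction amounts to integrating $\J\phi$ along the segment $[x,y]$ and applying the operator-norm bound; the converse uses differentiation at Lebesgue points.

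For the ``$\Leftarrow$'' implication I would first verify the inequality on polyhedral chains. Given $P=\sum_s\theta_s\otimes\vec s\,\mathcal{H}^1\mres s\in\mathbb{P}_{1,K}^m(\R^n)$ with $s=[x_s,y_s]$, the Fenchel\textendash Young inequality $\theta\cdot v\leq h(\theta)h_*(v)$ combined with the Lipschitz bound yields
\begin{equation*}
\langle P,\partial^\dagger\phi\rangle=\sum_s\theta_s\cdot(\phi(y_s)-\phi(x_s))\leq\sum_s h(\theta_s)\,h_*(\phi(y_s)-\phi(x_s))\leq\sum_s h(\theta_s)\,\mathcal{H}^1(s)=\mathbb{M}_h(P).
\end{equation*}
For general $F\in\mathcal{F}_{1,K}^m(\R^n)$ with $\mathbb{M}_h(F)<\infty$ (the case of infinite mass being trivial), I plan to produce polyhedral approximants $P_j\in\mathbb{P}_{1,K}^m(\R^n)$ with $\mathbb{F}(P_j-F)\to 0$ and $\mathbb{M}_h(P_j)\to\mathbb{M}_h(F)$ by starting from the recovery sequence provided by the relaxation definition of $\mathbb{M}_h$ and bringing the support into $K$ via the projection lemma (\cref{projlem}), followed by a further polyhedral approximation. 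The limit passage $\langle\partial P_j,\phi\rangle\to\langle\partial F,\phi\rangle$ is then justified since $\phi$ represents a continuous linear functional on $(\mathcal{F}_{0,K}^m(\R^n),|\cdot|_K^\flat)$ by \cref{dualflatzero} and $\partial$ is continuous with respect to the flat seminorm.

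For the ``$\Rightarrow$'' implication I would probe $\phi$ at interior Lebesgue points of $K$ using short segment-like polyhedral chains. Fix $x_0\in\mathrm{int}(K)$ at which $\phi$ is classically differentiable (true a.e.\ by Rademacher's theorem), and for $\theta\in\R^m$, $\vec e\in\mathcal{S}^{n-1}$, and small $\epsilon>0$ with $[x_0-\epsilon\vec e,x_0+\epsilon\vec e]\subset K$, consider the test chain $P_\epsilon:=\theta\otimes\vec e\,\mathcal{H}^1\mres[x_0-\epsilon\vec e,x_0+\epsilon\vec e]\in\mathbb{P}_{1,K}^m(\R^n)$, for which $\partial P_\epsilon=\theta(\delta_{x_0+\epsilon\vec e}-\delta_{x_0-\epsilon\vec e})$ and $\mathbb{M}_h(P_\epsilon)=2\epsilon h(\theta)$. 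The hypothesized constraint rearranges to
\begin{equation*}
\theta\cdot\frac{\phi(x_0+\epsilon\vec e)-\phi(x_0-\epsilon\vec e)}{2\epsilon}\leq h(\theta),
\end{equation*}
so letting $\epsilon\to 0$ gives $\theta\cdot\J\phi(x_0)\vec e\leq h(\theta)$ for every $\theta\in\R^m$ and $\vec e\in\mathcal{S}^{n-1}$, hence $\J\phi(x_0)\vec e\in\partial h(0)$ for every unit $\vec e$, i.e.\ $\J\phi(x_0)\in\partial H(0)$ by \cref{generated}.

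The main obstacle I anticipate is the approximation step in the ``$\Leftarrow$'' direction: the recovery sequence from the relaxation definition of $\mathbb{M}_h$ lives in $\mathbb{P}_1^m(\R^n)$ with support only in some neighborhood of $K$, and the projection lemma delivers chains supported in $K$ that are no longer polyhedral, so a careful (diagonal) combination is required to simultaneously preserve flat convergence, the mass limit, and polyhedrality within $\mathbb{P}_{1,K}^m(\R^n)$.
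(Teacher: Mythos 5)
Your proposal is correct and follows essentially the same route as the paper: reduce the constraint to polyhedral chains via the relaxation definition of $\mathbb{M}_h$ and the continuity of $\partial^\dagger\phi$, derive the single-segment inequality from the a.e.\ gradient condition (the paper does this by mollifying $\phi$ and applying the fundamental theorem of calculus, you via the equivalent two-point bound $h_*(\phi(y)-\phi(x))\le|y-x|$ combined with Fenchel--Young --- the same computation in different packaging, though your passage from the a.e.\ condition to the two-point bound tacitly needs a Fubini- or mollification-type argument since Rademacher's null set may contain a given segment), and for the converse probe with short segments at points of total differentiability. The support issue you flag in the approximation step is genuine but is also left implicit in the paper; it can be sidestepped more simply than by your projection-plus-diagonalization plan, namely by extending $\phi$ to $\R^n$ as $\phi\circ\textup{proj}_K$, which preserves $\J\phi\in\partial H(0)$ (as used later in the proof of \cref{EquivalenceKR}) and lets you pair $\phi$ with the unrestricted recovery sequence from the definition of $\mathbb{M}_h$.
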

\begin{proof}
	By continuity of $\partial^\dagger\phi$ and the definition of $\mathbb{M}_h$ (via relaxation of $\mathbb{M}_h$ on $\mathbb{P}_{1}^m(\R^n)$ with respect to flat convergence) it is equivalent to require that the inequality is satisfied for all $P\in\mathbb{P}_{1,K}^m(\R^n)$. Every $P$ can be written as a finite sum of non-overlapping oriented and $\R^m$-weighted line segments. Since $\mathbb{M}_h(L_1+L_2)=\mathbb{M}_h(L_1)+\mathbb{M}_h(L_2)$ for non-overlapping line segments $L_1$ and $L_2$, the constraint can be rewritten as
	\begin{equation*}
	\langle\theta\otimes\vec{e}\mathcal{H}^1\mres e,\partial^\dagger\phi\rangle\leq \mathbb{M}_h(\theta\otimes\vec{e}\mathcal{H}^1\mres e)
	\end{equation*}
	for all $\theta\in\R^m$ and line segments $e=[e_+,e_-]\subset K$ with orientation $\vec{e}=\frac{e_--e_+}{|e_--e_+|}\in\mathcal{S}^{n-1}$.
	 For all such $(\theta,e,\vec{e})$ we have, denoting by $\delta_x$ the Dirac measure in $x$,
	\begin{equation}
	\label{ecalc}
	\langle\theta\otimes\vec{e}\mathcal{H}^1\mres e,\partial^\dagger\phi\rangle=\langle\partial (\theta\otimes\vec{e}\mathcal{H}^1\mres e),\phi\rangle=\langle\theta (\delta_{e_-}-\delta_{e_+}),\phi\rangle=\theta\cdot(\phi(e_-)-\phi(e_+)).
	\end{equation}
	Hence, we have to verify the equivalence
	\begin{equation*}
	\theta\cdot(\phi(e_-)-\phi(e_+))\leq h(\theta)\mathcal{H}^1(e)\textup{ for all }\theta\in\R^m,e=[e_+,e_-]\subset K\quad\Leftrightarrow\quad \J\phi\in\partial H(0)\textup{ a.e.\:in }K.
	\end{equation*}
	
	\noindent``$\Rightarrow$'': Pick any point $x$ in the interior of $K$ such that $\phi$ is totally differentiable in $x$ (a.e.\:point in $K$ satisfies this property by Rademacher's theorem). Taking $e_+=x$ and $e_-=e_++\varepsilon\vec{e}$ we have
	\begin{equation*}
	\theta\cdot(\phi(x+\varepsilon\vec{e})-\phi(x))\leq h(\theta)\varepsilon
	\end{equation*}
	for all $\varepsilon>0$ (sufficiently small) and $\theta\in\R^m,\vec{e}\in\mathcal{S}^{n-1}$. Dividing by $\varepsilon$ and letting $\varepsilon\to 0$ we obtain
	\begin{equation}
	\label{equivalenceabove}
	\theta\cdot \J\phi(x)\vec{e}\leq h(\theta)\:\forall\theta,\vec{e}\quad\Leftrightarrow\quad \J\phi(x)\vec{e}\in\partial h(0)\:\forall\vec{e}\quad\Leftrightarrow\quad \J\phi(x)\in\partial H(0),
	\end{equation}
	where the last equivalence follows from \cref{generated}.

	\noindent``$\Leftarrow$'': Let $\theta\otimes\vec{e}\mathcal{H}^1\mres e\in\mathbb{P}_{1,K}^m(\R^n)$ be arbitrary. We can assume that the intersection of $e=[e_+,e_-]$ with the boundary of $K$ is empty (otherwise approximate in the flat norm by a sequence of line segments with this property). Let $\phi_\varepsilon\in C^\infty (\R^n;\R^m)$ be an $\varepsilon$-mollification (with $\varepsilon<\textup{dist}(e,\partial K)$) of $\phi$ (extended by zero to $\R^n$). We have $\J\phi_\varepsilon\in\partial H(0)$ in $K_\varepsilon=K\cap\{ \textup{dist}(.,\partial K)>\varepsilon \}$ by the convexity of $\partial H(0)$, thus $\theta\cdot \J\phi_\varepsilon\vec{e}\leq h(\theta)$ on $e\subset K_\varepsilon$ (see equivalence \labelcref{equivalenceabove}). In particular, we obtain (fundamental theorem of calculus)
	\begin{equation*}
	\theta\cdot(\phi_\varepsilon (e_-)-\phi_\varepsilon (e_+))=\int_e\theta\cdot \J\phi_\varepsilon\vec{e}\,\mathrm{d}\mathcal{H}^1\leq\int_eh(\theta)\,\mathrm{d}\mathcal{H}^1=h(\theta)\mathcal{H}^1(e).
	\end{equation*}
	Letting $\varepsilon\to 0$ we get the desired inequality from $\phi_\varepsilon\rightrightarrows\phi$ on $e$.
\end{proof}

We are now ready to prove the equivalence of problems \labelcref{DM,F} along with the generalized Kantorovich\textendash Rubinstein formula.
\begin{proof}[Proof of \cref{EquivalenceKR}]
\Cref{exmin} already proves the attainment of the minima.
Further, without loss of generality, we can assume that the minimizations are over $F\in\mathcal{N}_{1,K}^m(\R^n)$ with $\partial F=F_0$ and \cyan{sufficiently large compact and convex $K\subset\R^n$ (\cref{projlem})}. \todo[disable]{still requires a projection lemma}

\underline{$\min_F|F|_H=\sup_\phi \int \phi\cdot\mathrm{d }F_0$:} We apply Fenchel\textendash Rockafellar duality. Consider the Banach spaces
	\begin{align*}
	X=C^1(K;\R^m)\textup{ with norm }|\phi|_X=|\phi|_\infty+|\J\phi|_\infty\qquad\textup{and}\qquad Y=C(K;\R^{m\times n})\textup{ with norm }|\Phi|_Y=|\Phi|_{\infty }.
	\end{align*}
	Moreover, introduce the convex function $f:X\to\R$ and the linear and bounded operator $A:X\to Y$ by
	\begin{equation*}
	f(\phi)=\langle\phi,-F_0\rangle\qquad\textup{and}\qquad A\phi=\J\phi.
	\end{equation*}
	Further, let $g=\iota_{\partial H(0),K}$ (recall \cref{glem}) and note that this function is convex by the convexity of $\partial H(0)$.
	For any sequence $(\Phi_j)\subset Y$ with $\Phi_j\rightrightarrows 0$ we have $\Phi_j(x)\in\partial H(0)$ for all $x\in K$ if $j$ is chosen sufficiently large since zero is an interior point of $\partial H(0)$. Hence, function $g$ is continuous in $0\in Y\cap A\textup{dom}(f)$. Using the second constraint qualification in \cite[Thm.\:4.4.18]{BV} we obtain strong duality, 
	\begin{multline*}
	\sup\left\{ \langle\phi,F_0\rangle\: \middle|\:\phi\in C^1(K;\R^m),\J\phi(x)\in\partial H(0)\textup{ for all }x\in K \right\}
	=-\inf\left\{ f(\phi)+g(A\phi)\:\middle|\:\phi\in X \right\}\\
	=-\sup\left\{ -f^*(A^\dagger F)-g^*(-F)\:\middle|\:F\in Y^*\right\}
	=\inf\left\{ f^*(-A^\dagger F)+g^*(F)\:\middle|\: F\in Y^*\right\}.
	\end{multline*}
	We have $f^*(\mu)=\iota_{\{ -F_0\}}(\mu)$ for all $\mu\in X^*$. Moreover, for $F\in Y^*=\mathcal{M}_{1,K}^m(\R^n)$ and $\phi\in X$ we get
	\begin{equation*}
	\langle\phi,A^\dagger F\rangle=\langle A\phi,F\rangle=\langle \J\phi,F\rangle,
	\end{equation*}	 
	therefore $A^\dagger F=\partial F$.
	Using \cref{glem} we can continue the conversion of the primal problem,
	\begin{equation*}
	\inf\left\{ f^*(-A^\dagger F)+g^*(F)\:\middle|\: F\in Y^*\right\}
	=\inf\left\{ \iota_{\{ -F_0\}}(-\partial F)+|F|_H\:\middle|\:F\in \mathcal{M}_{1,K}^m(\R^n)\right\}.
	\end{equation*}
	The latter expression is equal to problem \labelcref{DM}. Finally, it suffices to show that every $\phi\in C^{0,1}(K;\R^m)$ with $\J\phi\in\partial H(0)$ a.e.\:in $K$ can be extended to $\phi\in C^{0,1}(\R^n;\R^m)$ satisfying the same Lipschitz constraint. Then the equivalence follows from \cref{Wduality} (last inequality),
	\begin{equation*}
	\labelcref{DM}=\sup_{\substack{\phi\in X\\ \J\phi\in\partial H(0)\textup{ in }K}}\langle\phi,F_0\rangle\leq \sup_{\substack{\phi\in C^{0,1}(K;\R^m)\\ \J\phi\in\partial H(0)\textup{\:a.e.\:in }K}}\langle\phi,F_0\rangle\leq \labelcref{DM}.
	\end{equation*}
	Pick an arbitrary $\phi\in C^{0,1}(K;\R^m)$ with $\J\phi\in\partial H(0)$ a.e.\:in $K$. Extend $\phi$ to $\R^n$ via $\phi(x)=\phi(\textup{proj}_K(x))$, where $\textup{proj}_K$ denotes the orthogonal projection onto $K$. By the convexity of $K$ the Lipschitz constant with respect to any norms $|\cdot|_1$ on $\R^n$ and $|\cdot|_2$ on $\R^m$ is preserved. For $|\cdot|_1=|\cdot|$ and $|\cdot|_2=h_*$ it is given by $\esssup_KH_*(\J\phi)$ invoking \cref{Hstarop}. Now use that $M\in\partial H(0)$ is equivalent to $H_*(M)\leq 1$.

	\underline{$\min_F\mathbb{M}_h(F)=\max_\phi \int \phi\cdot\mathrm{d }F_0$:} We apply Fenchel\textendash Rockafellar duality again. This time, we will directly get the desired function space for $\phi$. Define Banach spaces $X=\mathcal{F}_{1,K}^m(\R^n)$ and $Y=\mathcal{F}_{0,K}^m(\R^n)$. Recall that $\partial:X\to Y$ is linear and continuous. Moreover, the maps $\varphi=\mathbb{M}_h:X\to[0,\infty ]$ and $\psi=\iota_{\{ F_0\}}:Y\to\{ 0,\infty \}$ are convex, lower semicontinuous, and proper. Set $E=X\times Y$, $\Phi(x,y)=\varphi(x)+\psi(y)$ for $(x,y)\in X\times Y$, and $M=\{ (F,\partial F)\:|\:F\in X \}$. As in the proof of \cite[Cor.\:2.3]{AB86}, we show that $\bigcup_{\lambda\geq 0}\lambda(\textup{dom}(\Phi)-M)$ is a closed vector space (so that later we can apply \cite[Cor.\:2.2]{AB86}). In particular, we claim that (with a slight abuse of notation, we write $\partial X=\{ \partial F| F\in X \}$)
	\begin{equation*}
\bigcup_{\lambda\geq 0}\lambda(\textup{dom}(\Phi)-M)=X\times\partial X.
\end{equation*}
The right-hand side is obviously a vector space, and it is closed since $\partial$ is continuous. It suffices to prove
\begin{equation}
\textup{dom}(\Phi)-M=X\times\partial X,\label{union}
\end{equation}
because then $\textup{dom}(\Phi)-M=\bigcup_{\lambda\geq 0}\lambda(\textup{dom}(\Phi)-M)$. First, we note that
\begin{equation*}
X\times\partial X=\{ (F-G,-\partial G)\:|\:F,G\in X \}=X\times\{ 0\}-M.
\end{equation*}
Now let $(F,0)-(G,\partial G)\in X\times\partial X$ be arbitrary, where $F,G\in X$. Using \cref{flatform} we have $F=G_1-\partial G_2$, where $G_1\in\mathcal{F\!\!M}_{1,K}^m(\R^n)$ and $G_2\in\mathcal{F\!\!M}_{2,K}^m(\R^n)$. Recall that $F_0=\partial F_1$ for some $F_1\in\mathcal{N}_{1,K}^m(\R^n)$. Using this assumption and $\textup{dom}(\Phi)=\mathcal{F\!\!M}_{1,K}^m(\R^n)\times\{ F_0 \}$, we obtain
\begin{equation*}
(F,0)-(G,\partial G)=(G_1+F_1,F_0)-\left( (F_1,\partial F_1)+(\partial G_2,0)+(G,\partial G) \right)\in \textup{dom}(\Phi)-M,
\end{equation*}
which implies ($F,G$ were arbitrary) $X\times\partial X\subset \textup{dom}(\Phi)-M$. The reverse subset relation follows directly from
\begin{multline*}
\textup{dom}(\Phi)-M=\mathcal{F\!\!M}_{1,K}^m(\R^n)\times\{ F_0 \}-\{ (F,\partial F)\:|\: F\in X \}=\{ (G-F,\partial (F_1-F))\:|\:G\in\mathcal{F\!\!M}_{1,K}^m(\R^n),F\in X \}\\
=\{ (G+(F_1-F),\partial (F_1-F))\:|\:G\in\mathcal{F\!\!M}_{1,K}^m(\R^n),F\in X \}\subset X\times \partial X,
\end{multline*}
which finalizes the proof of \cref{union}. Applying \cite[Cor.\:2.2]{AB86} (as in the proof of \cite[Cor.\:2.3]{AB86}) yields strong duality and existence of a dual maximizer \cite[Eqn.\:2.6]{AB86},
\begin{equation}
\inf_{x\in X}\varphi(x)+\psi(\partial x)=\max_{y^*\in Y^*}-\varphi^*(-\partial^\dagger y^*)-\psi^*(y^*).\label{maximum}
\end{equation}
Finally, we identify the dual problem. By \cref{dualflatzero} we have $Y^*=C^{0,1}(K;\R^m)$. Recalling that $\varphi=\mathbb{M}_h$ we get
\begin{equation*}
-\varphi^*(-\partial^\dagger\phi)=\inf_{F\in X}\mathbb{M}_h(F)+\langle F,\partial^\dagger\phi \rangle=\begin{cases*}
-\infty&if $\mathbb{M}_h(F)+\langle F,\partial^\dagger\phi \rangle<0$ for some $F\in X$,\\
0&else,
\end{cases*}
\end{equation*}
for all $\phi\in C^{0,1}(K;\R^m)$. Since $-\varphi^*(-\partial^\dagger\phi)$ is the first term in the maximum in \cref{maximum}, we obtain the constraint
\begin{equation*}
\langle F,\partial^\dagger\phi\rangle\leq\mathbb{M}_h(F)\qquad\textup{for all}\qquad F\in X=\mathcal{F}_{1,K}^m(\R^n).
\end{equation*}
By \cref{dualconstraint} this is equivalent to $\J\phi\in\partial H(0)$ a.e.\:in $K$. As in the first part of the proof, we can extend $\phi$ to $\R^n$ such that this Lipschitz condition is still satisfied. Finally, by the above and \cref{maximum} we get
\begin{equation*}
\labelcref{F}
=\max_\phi-\iota_{\{ F_0\} }^*(\phi)=\max_\phi\int \phi\cdot\mathrm{d}F_0
\end{equation*}
with $\phi$ having the desired properties.
\end{proof}
\cyan{The following \namecref{eqforopt} will be used in the proof of \crefthmpart{equalitymasses}{item2}.
\begin{coro}[$|F|_H=\mathbb{M}_h(F)$ for optimal $F$]
	\label{eqforopt}
	Let $F\in\mathcal{N}_{1}^m(\R^n)$ be a minimizer of problem \labelcref{F}. Then $|F|_H=\mathbb{M}_h(F)$, thus $F$ also minimizes \labelcref{DM}.
\end{coro}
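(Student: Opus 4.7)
The plan is to combine the equality of the minimum values asserted in \cref{EquivalenceKR} with the general pointwise inequality $|F|_H \le \mathbb{M}_h(F)$ valid on $\mathcal{N}_1^m(\R^n)$. The argument is a short two-line sandwich once both ingredients are in place.

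First, I would note that by \cref{EquivalenceKR} the two minima coincide: set
\begin{equation*}
v := \min_{\partial G = F_0} \mathbb{M}_h(G) = \min_{\partial G = F_0} |G|_H = \max_\phi \int \phi \cdot \mathrm{d}F_0,
\end{equation*}
both minima being attained. Since $F$ minimizes \labelcref{F}, we have $\mathbb{M}_h(F)=v$, and since $F$ also satisfies $\partial F = F_0$, it is admissible for \labelcref{DM}, hence $|F|_H \ge v$.

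Second, I would invoke the general inequality $|F|_H \le \mathbb{M}_h(F)$ for $F \in \mathcal{N}_1^m(\R^n)$. This is the straightforward direction: by \cref{simax} there is a polyhedral sequence $(P_i) \subset \mathbb{P}_1^m(\R^n)$ with $\mathbb{F}(P_i - F) \to 0$ and $\mathbb{M}_h(P_i) \to \mathbb{M}_h(F)$, and on polyhedral chains one computes directly from \cref{propertygenerated} that $|P_i|_H = \mathbb{M}_h(P_i)$. Flat convergence together with uniformly bounded mass yields $P_i \ws F$ (testing against smooth compactly supported forms), and weak-$*$ lower semicontinuity of $|\cdot|_H$ (a convex conjugate, cf.\ \cref{glem}) gives $|F|_H \le \liminf_i |P_i|_H = \mathbb{M}_h(F)$.

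Combining the two yields the sandwich
\begin{equation*}
v \;\le\; |F|_H \;\le\; \mathbb{M}_h(F) \;=\; v,
\end{equation*}
so all three quantities agree, which is the claim $|F|_H = \mathbb{M}_h(F)$ and simultaneously shows that $F$ minimizes \labelcref{DM}. I do not anticipate any serious obstacle: the only delicate point is making sure the ``straightforward'' inequality $|F|_H \le \mathbb{M}_h(F)$ is cleanly justified on $\mathcal{N}_1^m(\R^n)$ via lower semicontinuity, but that follows from the standard dual representation of $|\cdot|_H$ recorded in \cref{glem}.
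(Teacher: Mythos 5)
Your proposal is correct and follows essentially the same route as the paper: the nontrivial inequality $|F|_H\geq\mathbb{M}_h(F)$ comes from minimality plus the equality of optimal values in \cref{EquivalenceKR}, and the easy inequality $|F|_H\leq\mathbb{M}_h(F)$ is obtained by polyhedral approximation, $|P_j|_H=\mathbb{M}_h(P_j)$ on polyhedral chains, weak-$*$ compactness, and lower semicontinuity of $|\cdot|_H$ via \cref{glem}. The only cosmetic difference is that you invoke \cref{simax} to produce the approximating sequence, whereas the paper extracts it directly from the relaxation definition of $\mathbb{M}_h$; both are valid here.
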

\begin{proof}
The inequality $|F|_H\leq\mathbb{M}_h(F)$ is not difficult to show. We defer its proof to \cref{viaminimizers} (cf.\:first part in proof of \crefthmpart{equalitymasses}{item2}). The reverse (non-trivial) inequality directly follows from \cref{EquivalenceKR} using that $F$ is a minimizer of \labelcref{F},
\begin{equation*}
|F|_H\geq\labelcref{DM}=\labelcref{F}=\mathbb{M}_h(F).\qedhere
\end{equation*}
\end{proof}}
\subsection{Calibrations as weak Jacobians of dual potentials}
\label{cwJ}
Let $h$ be a multi-material transport cost and $H$ generated by $h$. We recall that $\Phi=\J\phi$ is a calibration for $F$ if and only if the pair $(F,\phi)$ is admissible, that is $\partial F=F_0$ and $\J\phi\in\partial H(0)$ a.e., and $\mathbb{M}_h(F)=\int\phi\cdot\mathrm{d}F_0$. The primal-dual extremality conditions \cyan{related to problem \labelcref{F}} can equivalently be written as 
\begin{center}
\begin{minipage}{0.4\textwidth}
\begin{enumerate}[label=E.\arabic*]
\item \qquad $\partial^\dagger\phi\in\partial\mathbb{M}_h(F)$\qquad and\label{e1}
\item \qquad $\partial F\in\partial (-\iota_{\{ F_0 \}})^*(\phi)$.\label{e2}
\end{enumerate}
\end{minipage}
\end{center}
The first condition is equivalent to the admissibility of $\phi$ and $\mathbb{M}_h(F)=\int\phi\cdot\mathrm{d}F_0$, the second to $\partial F=F_0$. This can be obtained from \cite[Rels.\:(2.5), (8.3), and (8.4)]{R67}\footnote{The setting in \cite[Sec.\:3]{R67} is satisfied with $g=-\psi$ and ``topologically paired
real vector spaces'' $(X,X^*)$ and $(Y,Y^*)$. Indeed, every Banach space is topologically paired with its dual by the definition in \cite[Sec.\:2]{R67} (it is automatically locally convex and Hausdorff).} and \cref{dualconstraint}. 
\cyan{In this section}, we briefly remark on some properties of the generalized Kantorovich potentials $\phi$ and give examples which illustrate that all the characteristics of minimizers of \labelcref{F} are hidden in the definition of a calibration. 
\begin{rem}[Interpretation of dual potential $\phi$]\label{rem:dualPotential}
The following three observations indicate the relevance of the antiderivative associated with a calibration.
Assume that $F=\sum_e\theta_e\otimes\vec{e}\mathcal{H}^1\mres e$ is optimal for \labelcref{F} \cyan{with (finitely) discrete source and sink distribution} (that is $\mathbb{M}_h(F)=\min_{\partial G=\partial F}\mathbb{M}_h(G)$, where the sum is over \cyan{finitely many} non-overlapping edges $e=[e_+,e_-]\subset\R^n$ with orientations $\vec{e}\in\mathcal{S}^{n-1}$ and weights $\theta_e\in\R^m$) and let $\Phi=\J\phi$ be a calibration for $F$ (\cref{excal}).
\begin{itemize}
\item For each $e$ and $\mathcal{H}^1$-a.e.\:$x\in e$,
\begin{equation}\label{eqn:extremalityCalibration}
\text{$\partial_{\vec{e}}\phi(x)$ is extremal in the sense that it lies on the boundary of $\partial h(0)$,}
\end{equation}
where $\partial_{\vec{e}}\phi(x)$ denotes the derivative of $\phi$ at $x$ in direction $\vec{e}$ \cyan{(it exists $\mathcal{H}^1$-a.e.\:on $e$ by Rademacher's theorem)}.
Indeed, by the extremality condition \todo[disable]{I do not see how}  $\mathbb{M}_h(F)=\int\phi\cdot\mathrm{d}F_0$ \cyan{and \cref{ecalc} we obtain
\begin{equation*}
\sum_eh(\theta_e)\mathcal{H}^1(e)=\sum_e\langle\theta_e\otimes\vec{e}\mathcal{H}^1\mres e,\partial^\dagger\phi\rangle=\sum_e\theta_e\cdot (\phi(e_-)-\phi(e_+)).
\end{equation*}
We can assume that the edge $e_\varepsilon=[x,x+\varepsilon\vec{e}]\subset e$ ($\varepsilon>0$ sufficiently small) is appearing in the sum (otherwise refine the set of edges). Now recall that for every edge $e$ we have $\theta_e\cdot (\phi(e_-)-\phi(e_+))\leq h(\theta_e)\mathcal{H}^1(e)$ by the admissibility of $\phi$ (equivalence below \cref{ecalc}). In particular, by the above we must have $\theta_e\cdot (\phi(e_-)-\phi(e_+))=h(\theta_e)\mathcal{H}^1(e)$ for all $e$. Inserting $e=e_\varepsilon$, dividing by $\varepsilon$, and letting $\varepsilon\to 0$ yields 
\begin{equation*}
h(\theta_e)=\theta_e\cdot\partial_{\vec{e}}\phi(x),
\end{equation*}
which implies the statement.}
\item Now assume that $\phi$ is totally differentiable at a junction $x$ between different branches $e$. By the above this implies $h(\theta_e)=\theta_e\cdot\Phi(x)\vec{e}$ for all these branches $e$. Since $h(\theta_e)\geq\theta_e\cdot\Phi(x)\vec{u}$ for any $\vec{u}\in\mathcal{S}^{n-1}$ by the admissibility of $\phi$, we obtain $\theta_e^T \J\phi(x)=h(\theta_e)\vec{e}$. Thus,
\begin{equation*}
\sum_{e_-=x}h(\theta_e)\vec{e}=\left( \sum_{e_-=x}\theta_e^T \right)\J\phi(x)=\left( \sum_{e_+=x}\theta_e^T \right)\J\phi(x)=\sum_{e_+=x}h(\theta_e)\vec{e},
\end{equation*}
where mass conservation was used in the second equation. This equality is the well-known momentum conservation law \cyan{(e.g.\:\cite[p.\:255]{X} or \cite[eq.\:(4.31)]{San})} for inner vertices of optimal transport networks\footnote{Classically, it follows from $\nabla_y\left( \sum_{e_-=x}\theta_e\otimes\frac{y-e_+}{|y-e_+|}\mathcal{H}^1\mres[e_+,y]+ \sum_{e_+=x}\theta_e\otimes\frac{e_--y}{|e_--y|}\mathcal{H}^1\mres[y,e_-] \right)\bigg|_{y=x}=0$ by optimality of $x$.}.
\item In the context of branched transport (if multi-material transport is used as a convexification), one may interpret $\phi$ as a $\R^m$-valued \textit{landscape function}. This function was mathematically defined in \cite{S07}, and its Kantorovich potential role was mentioned on \cite[p.\:162]{S07}. It was introduced to describe and analyze the elevation of the landscape in a river basin (with branched drainage systems) in an equilibrium state (where the erosion effects caused by rain became negligible). Now assume that $\textup{supp}(F)$ is acyclic and path-connected. Pick any $x_0\in\textup{supp}(F)$ and set $Z(x_0)=0$. For $x\in\textup{supp}(F)$ and $[v_0,v_1]\cup\ldots\cup[v_{N-1},v_N]$ being the polygonal chain from $x_0$ to $x$ reduce $\phi$ to the function
\begin{equation*}
Z(x)=\sum_{i=1}^N\|\theta_i\|_{\ell^1}^{-1}\theta_i\cdot (\phi(v_i)-\phi(v_{i-1})),
\end{equation*}
where $\theta_i=\theta_e$ if $(v_{i-1},v_i)\cap e\neq\emptyset$. Here $\|\theta\|_{\ell^1}=\sum_j|\theta_j|$ may be interpreted as the total mass of $\theta\in\R^m$. Then this function coincides with the landscape function given in \cite[Def.\:3.1]{Xi14},
\begin{equation*}
Z(e_-)-Z(e_+)=\frac{h(\theta_e)}{\|\theta_e\|_{\ell^1}}\mathcal{H}^1(e)\quad\textup{and}\quad Z|_e\textup{ is linear}\quad\textup{for all }e.
\end{equation*}
\end{itemize}
\end{rem}

As the next example illustrates, calibrations may be very simple, but potentially discontinuous in contrast to the classical notion of calibrations.

\begin{examp}[{Two sources and sinks \cyan{\cite[Sec.\:4.5]{OM23}}}]\label{exm:twoSourcesSinks}
Let $m=n=2$, $h(\theta)=\max\{|\theta_1|,|\theta_2|,|\theta_1-\theta_2|\}$ and set
\begin{equation*}\textstyle
\vec{e}_1={1\choose0},\
\vec{e}_2=\tfrac12{1\choose\sqrt3},\
\vec{e}_3=\tfrac12{1\choose-\sqrt3},\
\theta^1={1\choose1},\
\theta^2={1\choose0},\
\theta^3={0\choose1},
\end{equation*}
and $p_{\cyan{-1}}=\vec{e}_1$, $p_{\cyan{-2}}=\vec{e}_1+\vec{e}_2$, $p_{\cyan{-3}}=\vec{e}_1+\vec{e}_3$, $p_{\cyan{+i}}=-p_{\cyan{-i}}$, $i=1,2,3$ (cf.\:\cref{fig:twoSourcesSinks} left).
It is readily checked that for the source and sink distribution
$
F_0=\theta^2(\delta_{p_{\cyan{-2}}}-\delta_{p_{\cyan{+2}}})+\theta^3(\delta_{p_{-3}}-\delta_{p_{\cyan{+3}}})
$
an optimizer of \labelcref{DM} is given by
\begin{equation*}
F=\theta^1\otimes\vec{e}_1\mathcal{H}^1\mres[p_{\cyan{+1}},p_{\cyan{-1}}]+\theta^2\otimes\vec{e}_2\mathcal{H}^1\mres[p_{\cyan{+2}},p_{\cyan{+1}}]\cup[p_{\cyan{-1}},p_{\cyan{-2}}]+\theta^3\otimes\vec{e}_3\mathcal{H}^1\mres[p_{\cyan{+3}},p_{\cyan{+1}}]\cup[p_{\cyan{-1}},p_{\cyan{-3}}]
\end{equation*}
since it is calibrated by
\begin{equation*}
\Phi(x)=\tfrac12\left(\begin{smallmatrix}1&\sqrt3\\1&-\sqrt3\end{smallmatrix}\right).
\end{equation*}
Likewise, for
$
F_0=\theta^2(\delta_{p_{\cyan{-2}}}-\delta_{p_{\cyan{+3}}})+\theta^3(\delta_{p_{\cyan{-3}}}-\delta_{p_{\cyan{+2}}})
$
two different optimizers of \labelcref{DM} are given by
\begin{gather*}
F=\theta^1\otimes\vec{e}_1\mathcal{H}^1\mres[p_{\cyan{+1}},p_{\cyan{-1}}]+\theta^2\otimes\vec{e}_{\cyan{3}}\mathcal{H}^1\mres[p_{\cyan{+3}},p_{\cyan{+1}}]\cyan{+\theta^2\otimes\vec{e}_2\mathcal{H}^1\mres[p_{\cyan{-1}},p_{\cyan{-2}}]}\\
+\theta^3\otimes\vec{e}_{\cyan{2}}\mathcal{H}^1\mres[p_{\cyan{+2}},p_{\cyan{+1}}]\cyan{+\theta^3\otimes\vec{e}_{\cyan{3}}\mathcal{H}^1\mres[p_{\cyan{-1}},p_{\cyan{-3}}]}\\
\qquad\textup{and}\qquad G=\theta^2\otimes\vec{e}_1\mathcal{H}^1\mres[p_{\cyan{+3}},p_{\cyan{-2}}]+\theta^3\otimes\vec{e}_1\mathcal{H}^1\mres[p_{\cyan{+2}},p_{\cyan{-3}}]
\end{gather*}
since, abbreviating $A_\pm=\{\pm x_1>\sqrt3|x_2|\}$, $B_\pm=\{\pm\sqrt3x_2>|x_1|\}$, they are both calibrated by
\begin{equation*}
\Phi(x)=\tfrac12\left(\begin{smallmatrix}1&\cyan{\mp}\sqrt3\\1&\cyan{\pm}\sqrt3\end{smallmatrix}\right)\text{ if }x\in A_{\pm},\quad
\Phi(x)=\left(\begin{smallmatrix}1&0\\0&0\end{smallmatrix}\right)\text{ if }x\in B_+,\quad
\Phi(x)=\left(\begin{smallmatrix}0&0\\1&0\end{smallmatrix}\right)\text{ if }x\in B_-.
\end{equation*}
\end{examp}
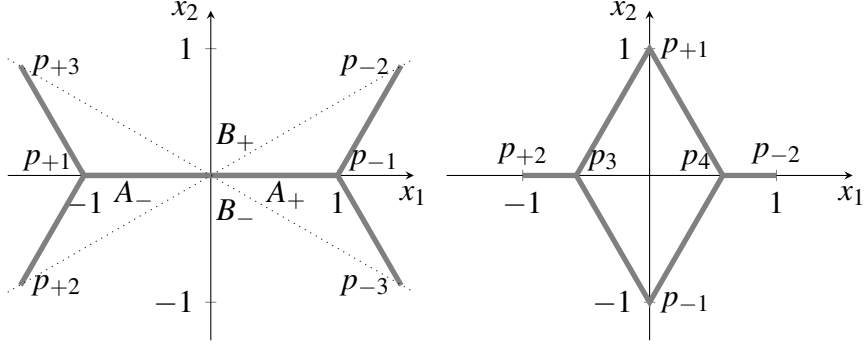
\begin{figure}
\centering
\begin{tikzpicture}
\begin{axis}[
x = 4em, y = 4em,
ymax = 1.3,
ymin = -1.3,
xmax = 1.6,
xmin = -1.6,
axis x line=center,
axis y line=center,
xlabel = {$x_1$},
ylabel = {$x_2$},
x label style={at={(axis description cs:1,.5)},anchor=north},
y label style={at={(axis description cs:.5,1)},anchor=east},
xtick = {-1,1},
xticklabels = {$-1$,$1$},
ytick = {-1,1},
yticklabels = {$-1$,$1$},
]
\addplot[gray,line width=2pt] coordinates {(-1.5,0.866) (-1,0) (1,0) (1.5,0.866)};
\addplot[gray,line width=2pt] coordinates {(-1.5,-0.866) (-1,0) (1,0) (1.5,-0.866)};
\addplot[dotted] coordinates {(-1.6,-0.92) (1.6,0.92)};
\addplot[dotted] coordinates {(-1.6,0.92) (1.6,-0.92)};
\node[anchor=west] at (axis cs:-1.5,0.866) {$p_{\cyan{+3}}$};
\node[anchor=west] at (axis cs:-1.5,-0.866) {$p_{\cyan{+2}}$};
\node[anchor=east] at (axis cs:1.5,0.866) {$p_{\cyan{-2}}$};
\node[anchor=east] at (axis cs:1.5,-0.866) {$p_{\cyan{-3}}$};
\node[anchor=south east] at (axis cs:-1,-0.05) {$p_{\cyan{+1}}$};
\node[anchor=south west] at (axis cs:1,-0.05) {$p_{\cyan{-1}}$};
\node at (axis cs:.6,-.15) {$A_+$};
\node at (axis cs:-.6,-.15) {$A_-$};
\node at (axis cs:.2,.3) {$B_+$};
\node at (axis cs:.2,-.3) {$B_-$};
\end{axis}
\end{tikzpicture}
\begin{tikzpicture}
\begin{axis}[
x = 4em, y = 4em,
ymax = 1.3,
ymin = -1.3,
xmax = 1.6,
xmin = -1.6,
axis x line=center,
axis y line=center,
xlabel = {$x_1$},
ylabel = {$x_2$},
x label style={at={(axis description cs:1,.5)},anchor=north},
y label style={at={(axis description cs:.5,1)},anchor=east},
xtick = {-1,1},
xticklabels = {$-1$,$1$},
ytick = {-1,1},
yticklabels = {$-1$,$1$},
]
\addplot[gray,line width=2pt] coordinates {(-1,0) (-.577,0) (0,1) (.577,0) (1,0)};
\addplot[gray,line width=2pt] coordinates {(-.577,0) (0,-1) (.577,0)};
\node[anchor=west] at (axis cs:0,1) {$p_{\cyan{+1}}$};
\node[anchor=west] at (axis cs:0,-1) {$p_{\cyan{-1}}$};
\node[anchor=south] at (axis cs:-1,0) {$p_{\cyan{+2}}$};
\node[anchor=south west] at (axis cs:-.577,-0.05) {$p_{\cyan{3}}$};
\node[anchor=south east] at (axis cs:.577,-0.05) {$p_{\cyan{4}}$};
\node[anchor=south] at (axis cs:1,0) {$p_{\cyan{-2}}$};
\end{axis}
\end{tikzpicture}%
\caption{Optimal multi-material transport network and notation from \cref{exm:twoSourcesSinks,exm:cycle} (left and right).}
\label{fig:twoSourcesSinks}
\end{figure}

While the optimal transportation networks in (classical) branched transport (for which multi-material transport \cyan{may} be used as a convexification) are known to always have a tree topology \cite{BCM08}
and also in single-material transport (which is equivalent to Wasserstein-$1$ transport) there always exists an optimal transportation network \red{without a loop},
this is no longer true in multi-material transport, as the next example illustrates.

\begin{examp}[Transportation network with cycle]\label{exm:cycle}
As in the previous example, let $m=n=2$ and use the same $h$, $\vec{e}_i$, and $\theta^i$, $i=1,2,3$. Further define
\begin{gather*}\textstyle
\theta^4={-1\choose1},\
p_{\cyan{+1}}={0\choose1},\
p_{\cyan{-1}}={0\choose-1},\
p_{\cyan{+2}}={-1\choose0},\
p_3={-1/\sqrt3\choose0},\
p_4={1/\sqrt3\choose0},\
p_{\cyan{-2}}={1\choose0},\\
F_0=\theta^4(\delta_{p_{\cyan{-1}}}-\delta_{p_{\cyan{+1}}})+\theta^{\cyan{1}}(\delta_{p_{\cyan{-2}}}-\delta_{p_{\cyan{+2}}}).
\end{gather*}
Then an optimizer of \labelcref{DM} is given by (cf.\:\cref{fig:twoSourcesSinks} right)
\begin{equation*}
F=\theta^{\cyan{1}}\otimes\vec{e}_1\mathcal{H}^1\mres[p_{\cyan{+2}},p_{\cyan{3}}]\cup[p_{\cyan{4}},p_{\cyan{-2}}]+\theta^2\otimes\vec{e}_2\mathcal{H}^1\mres[p_3,p_{\cyan{+1}}]\cup[p_{\cyan{-1}},p_{\cyan{4}}]+\theta^3\otimes\vec{e}_3\mathcal{H}^1\mres[p_{\cyan{3}},p_{\cyan{-1}}]\cup[p_{\cyan{+1}},p_{\cyan{4}}],
\end{equation*}
\cyan{because} it is calibrated by
\begin{equation*}
\Phi(x)=\tfrac12\left(\begin{smallmatrix}1&\sqrt3\\1&-\sqrt3\end{smallmatrix}\right).
\end{equation*}
Since $\vec{e}_1,\vec{e}_2,\vec{e}_3$ are the only allowed transport directions by \cref{rem:dualPotential},
one can check that there exists no optimal transportation network between the sources and sinks with a tree topology.
\end{examp}

The maximum possible degree of a junction $x$ in an optimal multi-material transport network depends on the multi-material transport cost $h$ and the space dimension $n$:
By \eqref{eqn:extremalityCalibration} the \cyan{derivative $\Phi(x)\vec{e}$ of $\phi$ at $x$ in direction $\vec{e}$ (assuming that calibration $\Phi=\J\phi$ is totally differentiable in $x$) must lie in the boundary of $\partial h(0)$ for all incident edge directions $\vec{e}$.}
As one can see in \cref{fig:homogenization} left, the cost from that figure allows at most junctions with six incident edges.
There is, however, no fundamental obstruction to arbitrary degrees, as the following example illustrates.

\begin{examp}[Arbitrary degree junction]
Fix $m=n=2$ and let $\vec{e}_1,\ldots,\vec{e}_k\in\mathcal{S}^{1}\cap[0,1]^2$ be pairwise distinct with $k\geq3$.
Let $a_1,\ldots,a_k\in\R\setminus\{0\}$ such that $\sum_{i=1}^ka_i\vec{e}_i=0$,
where we assume without loss of generality that $a_1,\ldots,a_j>0$ and $a_{j+1},\ldots,a_k<0$.
Now pick material vectors $\theta_i=|a_i|\vec{e}_i$ for $i=1,\ldots,k$ as well as lengths $l_1,\ldots,l_k>0$
and define the sink locations $x_i=l_i\vec{e}_i$, $i=1,\ldots,j$, and source locations $x_i=-l_i\vec{e}_i$, $i=j+1,\ldots,k$.
The multi-material transport network
\begin{equation*}
F=\sum_{i=1}^j\theta_i\otimes\vec{e}_i\mathcal{H}^1\mres[0,x_i]\cyan{+}\sum_{i=j+1}^k\theta_i\otimes\vec{e}_i\mathcal{H}^1\mres[\cyan{x_i,0}]
\qquad\text{with boundary $F_0=\partial F=\sum_{i=1}^j\theta_i\delta_{x_i}-\sum_{i=j+1}^k\theta_i\delta_{x_i}$}
\end{equation*}
has a junction in the origin with degree $k$.
Moreover, $F$ is optimal in \labelcref{DM} for the multi-material transport cost $h=|\cdot|$.
Indeed, a corresponding calibration is given by $\Phi=I\in\R^{2\times2}$, the identity matrix,
which is the Jacobian of the linear potential $\phi:x\mapsto x$.
Clearly, we have $\Phi\in\partial H(0)$ since $\Phi B_1(0)=B_1(0)=\partial h(0)$. Moreover, we obtain
\begin{equation*}
\int\phi\cdot\mathrm{d}F_0
=\sum_{i=1}^{j}\theta_i\cdot\phi (x_i)-\sum_{i=j+1}^k\theta_i\cdot\phi(x_i)
=\sum_{i=1}^{k}|a_i|l_i
=\sum_{i=1}^{k}h(\theta_i)\mathcal{H}^1([0,x_i])
=\mathbb{M}_h(F).
\end{equation*}
In fact, the same argument works for any norm $h$ such that the boundary of $\partial h(0)$ is tangential to the unit ball in $\vec{e}_1,\ldots,\vec{e}_k$,
e.g.\:the largest norm $h$ with $h(\vec{e}_1)=\ldots=h(\vec{e}_k)=1$.
Note that, by choosing $a_1,\ldots,a_k$ appropriately, any number of incoming or outgoing edges between $1$ and $k-1$ is possible.
\end{examp}
The generically occurring degree of a junction, given a multi-material transport cost $h$ and space dimension $n$, however, will be much lower.
For instance, if $h=|\cdot|_1$ is the $\ell^1$-norm, then all different materials are transported independently from each other
(there is no discount for transporting different materials together)
so that each material is transported along straight lines from its sources to its sinks.
Since straight lines generically do not cross in more than two spatial dimensions,
there are generically no interior junctions in an optimal multi-material transport network.
This changes if $\partial h(0)\subset\R^m$ contains some hyperellipsoid \red{$\Phi B_1(0)$} touching the boundary \red{of $\partial h(0)$} in $k>m$ points with pairwise nonparallel position vectors.
We expect that in that case in dimensions $n\geq m$ a junction in an optimal multi-material transport network generically has degree $\lfloor m(m+1)/(2m-2)\rfloor$:
The touching points define an $m$-dimensional $k$-stencil (modelling an allowed junction of degree $k$) up to rigid motion in $\R^n$.
\red{Moreover, all directions of that stencil lie in an $m$-dimensional subspace of $\R^n$ (since the preimage of the boundary of $\partial h(0)$ under $\Phi$ intersects $B_1(0)$ only in an $m$-dimensional subspace). Therefore}
we may without loss of generality restrict to $n=m$.
Then given $\ell$ source- and sink-positions, we would like to rigidly position the stencil (or equivalently the source and sink configuration)
such that each source and sink lies on a stencil branch\red{, i.e.\:we seek a corresponding rotation $R\in\mathrm{SO}(m)$ and translation $t\in\R^m$ of the stencil}.
\red{That a point lies on a line can be expressed by exactly $m-1$ equations, hence we get}
$m-1$ constraints for each source and sink, while the rigid motions \red{$\mathrm{SO}(m)\times\R^m$} form a $m(m+1)/2$-dimensional manifold of \red{sought} degrees of freedom,
resulting in the estimate $\ell\leq m(m+1)/(2m-2)$.
\section{Equality of the masses $\mathcal{M}_h,|.|_H$, and $\mathbb{M}_h$}
\label{equalityofmasses}
\red{We first prove $\mathcal{M}_h=|.|_H$ in \cref{sec:equalityCurrentsMeasures} and then $|.|_H=\mathbb{M}_h$ in \cref{viaminimizers}.}

\subsection{$\mathcal{M}_h=|.|_H$ via duality in the definitions}\label{sec:equalityCurrentsMeasures}
\begin{proof}[Proof of \crefthmpart{equalitymasses}{item1}]
If $\omega\in\Omega_{m,c}^1(\R^n)$ and $x\in\R^n$, then we identify $\omega(x)\in\Lambda_m^1\R^n$ with its natural representation matrix \red{in $\R^{m\times n}$}. Using \cref{Hstarop} we obtain $|\omega(x)|_h=H_*(\omega(x))$, thus $|\omega|_h^\infty=\sup_{\R^n}H_*(\omega)$. Now let $F\in \mathcal{M}_1^m(\R^n)$ be arbitrary. For all $\varepsilon>0$ there exists a compact and convex set $K_\varepsilon\subset\R^n$ such that $|F|(\R^n\backslash K_\varepsilon)<\varepsilon$ by the regularity of $|F|$. We can choose $K_\varepsilon$ to equal the closure of $B_{R_\varepsilon}(0)$, where $R_\varepsilon\to\infty$. In particular, we can assume $|(F\mres (\R^n\backslash K_\varepsilon))(\omega )|\leq\varepsilon$ for all $\omega\in\Omega_{m,c}^1(\R^n)$ with $|\omega|_h^\infty\leq 1$. Thus, by density and \cref{glem} we obtain
\begin{multline*}
\mathcal{M}_h(F)=\sup_{|\omega|_h^\infty\leq 1}F(\omega )=\sup_{|\omega|_h^\infty\leq 1}(F\mres K_\varepsilon)(\omega )+(F\mres (\R^n\backslash K_\varepsilon))(\omega )\geq \sup_{\substack{\Phi\in C(K_\varepsilon;\R^{m\times n})\\ H_*(\Phi)\leq 1\textup{ in }K_\varepsilon}}\langle\Phi,F\mres K_\varepsilon\rangle-\varepsilon\\
=\red{\iota}_{\partial H(0),K_\varepsilon}^*(F\mres K_\varepsilon)-\varepsilon=|F\mres K_\varepsilon|_H-\varepsilon.
\end{multline*} 
Hence, we have $\mathcal{M}_h(F)\geq\liminf_{\varepsilon\to 0}|F\mres K_\varepsilon|_H\geq |F|_H$ using the weak-$*$ lower semicontinuity of $|\cdot|_H$. For the reverse inequality note that $F=\sum_QF\mres Q$ by the $\sigma$-additivity of $F$, where the sum is over half-open unit cubes $Q=[z_1,z_1+1)\times\ldots\times [z_n,z_n+1)$ with $z_i\in\mathbb{Z}$. Again, by density and \cref{glem},
\begin{equation*}
\mathcal{M}_h(F)=\sup_{|\omega|_h^\infty\leq 1}F(\omega )\leq\sum_Q\sup_{\substack{\Phi\in C(\overline{Q};\R^{m\times n})\\ H_*(\Phi)\leq 1\textup{ in }\overline{Q}}}\langle\Phi,F\mres Q\rangle=\sum_Q\red{\iota}_{\partial H(0),\overline{Q}}^*(F\mres Q)=\sum_Q|F\mres Q|_H=|F|_H,
\end{equation*}
which finishes the proof.
\end{proof}
\subsection{$|.|_H=\mathbb{M}_h$ via minimizers of problems \labelcref{DM,F}}
\label{viaminimizers}
\begin{proof}[Proof of \crefthmpart{equalitymasses}{item2}]
Let $F\in\mathcal{F\!\!M}_1^m(\R^n)$.

\underline{$|F|_H\leq\mathbb{M}_h(F)$:} By definition of $\mathbb{M}_h$ there exists a sequence $(P_j)\subset\mathbb{P}_1^m(\R^n)$ with $\mathbb{F}(P_j-F)\to 0$ and $\mathbb{M}_h(P_j)\to\mathbb{M}_h(F)$. Since $\mathbb{M}_h(P_j)=|P_j|_H$ is bounded, we have $P_j\ws F$ up to a subsequence using the Banach\textendash Alaoglu theorem. Therefore, the lower semicontinuity of $|.|_H$ (\cref{glem}) implies
\begin{equation*}
|F|_H\leq\liminf_j|P_j|_H=\liminf_j\mathbb{M}_h(P_j)=\mathbb{M}_h(F).
\end{equation*}

\underline{$\mathbb{M}_h(F)\leq |F|_H$ for the case $F\in\mathcal{N}_1^m(\R^n)$:} STEP 1 (Preparation): For $\delta>0$ define the lattice
	\begin{equation*}
	L_\delta=\{ x\in\R^n\:|\:x_i\in\delta\mathbb{Z}\textup{ for some }i \}.
	\end{equation*} 
	By \cite[p.\:395]{F} we can assume that the positioning of $L_\delta$ is such that (otherwise translate $L_\delta$ or $F$)
		\begin{equation}
		\label{mass}
		|F_i|(L_\delta)+|\partial F_i|(L_\delta)=0
		\end{equation}
		for all $i=1,\ldots, m$ and all but countably many $\delta>0$. We restrict to values of $\delta$ for which this is satisfied.
	Moreover, define the set of half-open $\delta$-hypercubes whose boundaries are contained in $L_\delta$,
	\begin{equation*}
	C_\delta=\left\{ \red{\bigtimes_{i=1}^n}[z_i,z_i+\delta)\:\middle|\:z_i\in \delta\mathbb{Z}  \right\}.
	\end{equation*}
%
	For all $Q\in C_\delta$ we define
		\begin{equation*}
    \red{\vec{\mu}_Q=\partial(F\mres Q).}
		\end{equation*}
We have $\vec{\mu}_Q\in\mathcal{N}_0^m(\R^n)$ for a.e.\:$\delta>0$ (and for the rest of the proof we restrict to such values of $\delta$):
indeed, by \cite[pp.\:395\:\&\:440]{F} we get
			\begin{equation*}
			(\vec{\mu}_Q)_i=(\partial F_i)\mres Q+\sum_{j=1}^{2n}[F_i,S_j^Q,\nu_j^Q],\qquad i=1,\ldots,m,
			\end{equation*}
where $[F_i,S_j^Q,\nu_j^Q]\in\mathcal{F}_0^1(\R^n)$ is the so-called slicing of $F_i$ through the $j$-th $(n-1)$-dimensional boundary hyperface $S_j^Q$ of $Q$ with respect to unit outward normal $\nu_j^Q\perp\textup{aff}(S_j^Q)$
(which by \cite[pp.\:395\:\&\:440]{F} can be expressed as $[F_i,S_j^Q,\nu_j^Q] = ( (\partial F_i)\mres H_j^Q-\partial(  F_i\mres H_j^Q) )\mres S_j^Q$ for $H_j^Q=\textup{aff}( S_j^Q )+(0,\infty)\nu_j^Q$),
and it holds $[F_i,S_j^Q,\nu_j^Q] \in\mathcal{N}_0^1(\R^n)$ by \cite[Thm.\:4.3.2 (3)]{F} for a.e.\:$\delta>0$.
			
		STEP 2 (Definition of ``locally optimal'' $F_\delta$): Using \cref{projlem,exmin,eqforopt} there exist minimizers $F_Q\in\mathcal{N}_{1,\overline{Q}}^m(\R^n)$ of
		\begin{equation*}
		\min_{\partial G=\vec{\mu}_Q}\mathbb{M}_h(G)
		\end{equation*}
		that satisfy $\mathbb{M}_h(F_Q)=|F_Q|_H$. Now define
		\begin{equation*}
		F_\delta=\sum_{Q\in C_\delta}F_Q.
		\end{equation*}

		STEP 3 ($\mathbb{N}_h(F_\delta)$ uniformly bounded): Using the optimality of $F_Q$ \red{and \cref{eqforopt}} we have
		\begin{equation}\label{eqn:constructionHasSmallerMass}
		\mathbb{M}_h(F_\delta)\leq\sum_{Q\in C_\delta}\mathbb{M}_h(F_Q)=\sum_{Q\in C_\delta}|F_Q|_H\leq\sum_{Q\in C_\delta }|F\mres Q|_H=|F|_H.
		\end{equation}
		Hence, we get that $\mathbb{N}_h(F_\delta)$ is uniformly bounded in $\delta$ (we clearly have $\partial F_\delta=\partial F$).

		STEP 4 ($F_\delta \ws F$): Without loss of generality, we can assume $m=1$. Fix some $\phi\in C_c^\infty(\R^n;\R^n)$ (by the Stone\textendash Weierstrass theorem it is sufficient to take regular $\phi$ because $|F_\delta-F|(\R^n )$ is uniformly bounded in $\delta$). Write $G_\delta=F_\delta-F$.
		Using that $\phi$ is Lipschitz, there exists a constant $C=C(\phi)>0$ such that $\phi(x)=\phi(x_Q)+\delta Ct(x)$ for all $x\in Q$ and $Q\in C_\delta$, where $x_Q$ denotes the centre of $Q$ and \red{$t:\R^n\to[-1,1]^n$ is continuous on each $Q$}. Thus, we get
		\begin{equation*}
		\left|\int\phi\cdot\mathrm{d}G_\delta\right|\leq\sum_{Q\in C_\delta}\left|\int_Q\phi(x_Q)\cdot\mathrm{d}G_\delta\right|+\delta C\left|  \int_Q t\cdot\mathrm{d}G_\delta\right|
\red{\leq\sum_{Q\in C_\delta}\left|\int_Q\phi(x_Q)\cdot\mathrm{d}G_\delta\right|+\delta C\sqrt n\mathbb{M}(G_\delta)}.
		\end{equation*}
		The \red{last summand} clearly converges to zero since the mass of $G_\delta$ is uniformly bounded. Now \red{let $\psi_Q(x)=\chi_Q(x)\phi(x_Q)\cdot x$ with $\chi_Q$ a smooth cutoff function taking the value $1$ on $Q$ and} estimate
		\begin{equation*}
		\red{\left|\int_{Q}\phi(x_Q)\cdot\mathrm{d}G_\delta\right|
    =\left|\int_{Q}\nabla\psi_Q\cdot\mathrm{d}G_\delta\right|
    =\left|\langle\nabla\psi_Q,F_Q-F\mres Q\rangle\right|
    =\left|\langle\psi_Q,\partial(F_Q-F\mres Q)\rangle\right|
    =0,}
		\end{equation*}
    \red{since $\partial F_Q=\vec{\mu}_Q=\partial(F\mres Q)$. Hence the first summand vanishes, and we have proven $F_\delta \ws F$.}

		FINAL STEP: By $F_\delta -F\ws 0$ and and the uniform boundedness of $\mathbb{N}_h(F_\delta)$ we get $\mathbb{F}(F_\delta -F)\to 0$ up to a subsequence (\cref{Compactness}). 
    The lower semicontinuity of $\mathbb{M}_h$ \red{and \eqref{eqn:constructionHasSmallerMass} then imply}
		\begin{equation*}
    \red{\mathbb{M}_h(F)
    \leq\liminf_\delta\mathbb{M}_h(F_\delta)
    \leq|F|_H}.
		\end{equation*}

\underline{$\mathbb{M}_h(F)\leq |F|_H$ for the case $F\in\mathcal{F\!\!M}_1^m(\R^n)\backslash\mathcal{N}_1^m(\R^n)$:} By the deformation theorem \cite{W992} there exists $(P_j)\subset\mathbb{P}_1^m(\R^n)$ with $\mathbb{F}(F-P_j)\to 0$ and $\mathbb{F}(\partial F-\partial P_j)\to 0$. By \cref{flatform} there are $A_j\in\mathcal{F\!\!M}_{\red{0}}^m(\R^n),B_j\in\mathcal{F\!\!M}_{\red{1}}^m(\R^n)$ with $\partial F-\partial P_j=A_j+\partial B_j$ and $\mathbb{M}_h(A_j)+\mathbb{M}_h(B_j)\to 0$. Hence, we obtain $\mathbb{M}_h(\partial (F-B_j))=\mathbb{M}_h(A_j+\partial P_j)\leq \mathbb{M}_h(A_j)+\mathbb{M}_h(\partial P_j)<\infty$. Thus, we have $F-B_j\in\mathcal{N}_1^m(\R^n)$, therefore $\mathbb{M}_h(F-B_j)\leq |F-B_j|_H$ by the second case. Finally,
\begin{equation*}
\mathbb{M}_h(F)-\mathbb{M}_h(B_j)\leq\mathbb{M}_h(F-B_j)=|F-B_j|_H\leq|F|_H+|B_j|_H.
\end{equation*}
Letting $j\to 0$ we obtain the desired inequality using $|B_j|_H\leq\mathbb{M}_h(B_j)$ (cf.\:beginning of proof).
\end{proof}

\section{Acknowledgements}
This work was supported by the Deutsche Forschungsgemeinschaft (DFG, German Research Foundation) under the priority program SPP 1962, Grant WI 4654/1-1, and under Germany’s Excellence Strategy EXC 2044-390685587, Mathematics Münster: Dynamics-Geometry-Structure. B.W.’s and J.L.’s research was supported by the Alfried Krupp Prize for Young University Teachers awarded by the Alfried Krupp von Bohlen und Halbach-Stiftung. B.S.’s research was supported by the Emmy Noether Programme of the DFG, Grant SCHM 3462/1-1.

\printbibliography
\end{document}